\documentclass[11pt, twoside]{article}

\usepackage{amsmath}
\usepackage{amssymb}
\usepackage{titletoc}
\usepackage{mathrsfs}
\usepackage{amsthm}

\usepackage{indentfirst}
\usepackage{color}
\usepackage{txfonts}
\usepackage{enumerate}
\usepackage{anysize}

\textwidth=15cm
\textheight=23.56cm
\oddsidemargin 0.46cm
\evensidemargin 0.46cm

\allowdisplaybreaks

\pagestyle{myheadings}
\markboth{\footnotesize\rm\sc Yiqun Chen, Dachun Yang and Wen Yuan}
{\footnotesize\rm\sc Matrix-Weighted Campanato Spaces}

\newtheorem{theorem}{Theorem}[section]
\newtheorem{lemma}[theorem]{Lemma}
\newtheorem{corollary}[theorem]{Corollary}
\newtheorem{proposition}[theorem]{Proposition}
\theoremstyle{definition}

\newtheorem{remark}[theorem]{Remark}
\newtheorem{definition}[theorem]{Definition}

\numberwithin{equation}{section}

\begin{document}

\title{\bf\Large
Matrix-Weighted Campanato Spaces:
Duality and Calder\'on--Zygmund Operators
\footnotetext{\hspace{-0.35cm} 2020 {\it Mathematics Subject Classification}.
Primary 46E35; Secondary 42B20, 42B30, 42B35, 46E40, 47A56.\endgraf
{\it Key words and phrases.} matrix weight, Campanato space,
Hardy space, reducing operator, duality,
Calder\'on--Zygmund operator.\endgraf
This project is partially supported by
the National Natural Science Foundation of China
(Grant Nos. 12431006 and 12371093),
and the Fundamental Research Funds
for the Central Universities (Grant Nos. 2253200028 and 2233300008).}}
\date{}
\author{Yiqun Chen, Dachun Yang\footnote{Corresponding author,
E-mail: \texttt{dcyang@bnu.edu.cn}/{\color{red}\today}/Final Version.}\
\ and Wen Yuan}

\maketitle

\vspace{-0.8cm}

\begin{center}
\begin{minipage}{13.8cm}
{\small {\bf Abstract}\quad
Let $p\in(0,\infty)$, $q\in[1,\infty)$, $s\in\mathbb Z_+$,
and $W$ be an $A_p$-matrix weight,
which in the scalar case is exactly a
Muckenhoupt $A_{\max\{1,p\}}$ weight.
In this article, by using the
reducing operators of $W$, we introduce matrix-weighted
Campanato spaces $\mathcal L_{p,q,s,W}$. When $p\in(0,1]$,
applying the atomic and
the finite atomic characterizations of the matrix-weighted Hardy
space $H^p_W$, we prove that the dual space of $H^p_W$
is precisely $\mathcal L_{p,q,s,W}$, which further induces
several equivalent characterizations
of $\mathcal L_{p,q,s,W}$. In addition, we
obtain a necessary and sufficient condition for the boundedness
of  modified Calder\'on--Zygmund operators on $\mathcal L_{p,q,s,W}$
with $p\in(0,\infty)$,
which, combined with the duality, further gives a necessary
and sufficient condition for the boundedness of Calder\'on--Zygmund
operators on $H^p_W$ with $p\in(0,1]$.}
\end{minipage}
\end{center}

\vspace{0.2cm}



\section{Introduction}
The study of matrix-weighted function spaces and the related
boundedness of operators can be traced to the investigation of
Wiener and Masani \cite{wm58} on the prediction theory of
multivariate stochastic processes, they
introduced the matrix-weighted Lebesgue space
$L^2_W$ on $\mathbb{R}^n$, where $W$ is a matrix weight.
Since the 1990s,
motivated by problems concerning the angle between the
past and the future of the multivariate random stationary process,
as well as the boundedness of the inverse of Toeplitz operators,
this field  has witnessed remarkable advancements.
Treil and Volberg \cite{tv97} introduced matrix
$A_2$ weights on $\mathbb R$ and proved that
the Hilbert transform is bounded on $L^2_W$ over $\mathbb R$
if and only if $W$ is such
an $A_2$ matrix weight.
Subsequently, Nazarov and Treil \cite{nt96} and Volberg \cite{v97}
independently extended this result to the matrix-weighted Lebesgue space
$L^p_W$ on $\mathbb R$ with $W\in A_p$ for the full range $p\in(1,\infty)$.
Christ and Goldberg \cite{cg01} introduced the matrix-weighted
maximal operator, and Goldberg \cite{Gold} subsequently proved
its boundedness on $L^p_W$ spaces as well as the boundedness of
certain convolutional Calder\'on--Zygmund operators.
Nazarov et al. \cite{nptv17} established the boundedness
of Calder\'on--Zygmund operators on $L^p_W$
with norm bound $C[W]^{\frac 32}_{A_2}$ and, quite surprisingly,
Domelevo et al. \cite{dptv24} later proved that this exponent $\frac32$ is optimal.
Bownik and Cruz-Uribe \cite{bc22} extended
the Jones factorization and the Rubio de Francia
extrapolation theorems to the matrix weight setting.
For more studies on the boundedness of classical operators on the
matrix-weighted Lebesgue space $L^p_W$, we refer to
\cite{dhl20,dly21,llor23, llor24, n12, n25-2, nr18, v24}.
Meanwhile, matrix-weighted function spaces beyond Lebesgue
spaces have also attracted growing attention.
Frazier and Roudenko \cite{fr04, fr21} and Roudenko \cite{rou03, rou04}
systematically studied the  matrix-weighted Besov and Triebel--Lizorkin spaces.
Bu et al. \cite{bhyy1,bhyy2,bhyy3, byymz, byyzhang} and Yang et al.
\cite{YYZhang} considered and developed
more general matrix-weighted Besov-type and Triebel--Lizorkin-type spaces.
For more studies on matrix-weighted function spaces, we also refer
to \cite{bgx25,bx24-1, bx24-2, bx24-3, lyy24-1, lyy24-2,
n25, wgx25, wgx25b, wyy23}.
We also refer to survey articles \cite{byyz,c25}
for more recent results related to matrix weights.

Surprisingly, a systematic real-variable theory
for matrix-weighted Hardy spaces has long remained
unexplored. To solve this question, recently Bu et al.
\cite{bcyy} introduced and studied the
matrix-weighted Hardy space $H^p_W$ with matrix $A_p$-weight $W$ and
characterized them, respectively,
in terms of various maximal functions and atoms.
However, a dual theory of these Hardy spaces is still missing.
Recall that, in the scalar case, the dual space of the Hardy space $H^p$ is the Campanato space (see \cite{GR}); particularly,
the dual space of the Hardy space $H^1$ is the space  BMO (see \cite{FS}), which is a milestone
achievement in harmonic analysis.
The classical Campanato spaces were originally introduced
by Campanato in \cite{C64} and have  played an
important role in both harmonic analysis
(see, for example, \cite{ho19,jtyyz22iii,JN61,N10,N17,ns2012,TYY19,
YN21,YNS25,ZCTY23})
and partial differential equations
(see, for example, \cite{C66,CW21,CW23,DL19,NY19}).
In the scalar case, the weighted Campanato spaces have been studied by Garcia-Cuerva
\cite{G79} and Hu and Zhou \cite{HZ19},
and some recent variants of classical weighted Campanato spaces
and their applications are given by D. Yang and S. Yang \cite{YY10,YY11} and
Hu et al. \cite{HBLWY}.
Based on these, a natural question is what is the dual space of $H^p_W$.

Based on the known atomic and finite atomic characterizations of
$H^p_W$ obtained in \cite{bcyy}, in this article
we introduce a matrix-weighted Campanato space
that serves as the dual space of $H^p_W$.
More precisely, let $p\in(0,\infty)$, $q\in[1,\infty)$,
$s\in\mathbb Z_+$,
and $W$ be an $A_p$-matrix weight,
which in scalar case is exactly a
Muckenhoupt $A_{\max\{1,p\}}$ weight. By using the
reducing operators of $W$, we first introduce matrix-weighted
Campanato spaces $\mathcal L_{p,q,s,W}$.
Then, when $p\in(0,1]$, using the atomic and the
finite atomic characterizations of $H^p_W$, we prove that the dual space of
$H^p_W$ is precisely $\mathcal L_{p,q,s,W}$.
Applying this duality, we establish several equivalent characterizations
of $\mathcal L_{p,q,s,W}$. In addition, when $p\in(0,\infty)$, we
obtain a necessary and sufficient condition for the boundedness
of modified  Calder\'on--Zygmund operators on $\mathcal L_{p,q,s,W}$.
Combining this with the duality further yields a necessary
and sufficient condition for the boundedness of Calder\'on--Zygmund
operators on $H^p_W$ with $p\in(0,1]$.
This result complements the work in \cite{bcyy}, where only the
sufficiency of the boundedness of
Calder\'on--Zygmund operators on $H^p_W$ was given.

We point out that it is nontrivial to find a suitable definition of the matrix-weighted
Campanato space that correctly captures the duality of $H^p_W$.
Indeed, we need to  combine and make full use of the
reducing operators related to matrix weight $W$ under consideration
and the atomic and the finite atomic
decompositions of $H^p_W$.

The organization of the remainder of this article is as follows.

In Section \ref{ad},
let $p\in(0,\infty)$, $q\in[1,\infty)$,
$s\in\mathbb Z_+$,
and $W$ be an $A_p$-matrix weight.
We introduce the $\mathbb A$-matrix-weighted
Campanato spaces $\mathcal L_{p,q,s,\mathbb A}$ (see Definition \ref{mwcs})
and then show that, if $\mathbb A:=\{A_Q\}_{Q\in\mathscr{Q}}$
is a family of reducing operators of order $p$ for $W$, then
$\mathcal L_{p,q,s,\mathbb A}$ is independent of the
choice of $\mathbb A$, which we denote as $\mathcal L_{p,q,s,W}$
[see Remark \ref{re74}(i)].
When $p\in(0,1]$, using the atomic and the
finite atomic characterizations of the matrix-weighted Hardy
space $H^p_W$ (see Lemmas \ref{W-F-atom} and \ref{finatom}),
we establish the duality between $H^p_W$ and $\mathcal L_{p,q,s,W}$
(see Theorem \ref{dual}).
As a consequence of this duality, we obtain several equivalent
characterizations of $\mathcal L_{p,q,s,W}$
(see Corollary \ref{2247} and Theorem \ref{JN}).

In Section \ref{cz-l}, we recall the concept of the modified Calder\'on--Zygmund
operator related to a given Calder\'on--Zygmund
operator $T$ (see Definition \ref{mcz}) and
prove that this modified Calder\'on--Zygmund operator
is bounded on $\mathcal L_{p,q,s,W}$ if and only if
$T$  satisfies the vanishing conditions up to order $s$
(see Theorem \ref{btbbmc}).
Combining this and the duality with $p\in(0,1]$, we further show that
the Calder\'on--Zygmund operator $T$ is bounded on $H^p_W$
if and only if the adjoint operator $T^\ast$ of the Calder\'on--Zygmund
operator $T$ satisfies the vanishing conditions up to order $s$
(see Theorem \ref{CZ}).
This result complements the work in \cite{bcyy}, where only the
sufficiency for the boundedness of $T$ was given.

At the end of this section, we make some conventions on symbols.
Throughout this article, we work in $\mathbb{R}^n$,
which serves as our default underlying space
unless otherwise specified.
For any $p\in(0,\infty]$, the Lebesgue space $L^p$
has the usual meaning
and we define
$\|\vec f\|_{L^p}:=\|\,|\vec f|\,\|_{L^p}$
for all measurable
vector-valued functions $\vec f: \mathbb{R}^n\to\mathbb{C}^m$.
A cube  $Q\subset\mathbb{R}^n$ always has edges
parallel to the axes. For any cube $Q\subset\mathbb{R}^n$,
let $c_Q$ be its \emph{center} and $l(Q)$
denote its \emph{edge length}.
For any $r\in(0,\infty)$,
let $rQ$ denote the cube with the same
center as $Q$ and the edge length
$rl(Q)$.
We use $\mathbf{0}$ to
denote the \emph{origin} of ${\mathbb{R}^n}$.
For any $p\in(0,\infty)$, let $L^p_{\mathrm{loc}}$ denote the
set of all $p$-locally integrable functions on $\mathbb R^n$.
For any measurable set $E$ in
$\mathbb{R}^n$ with $|E|\in(0,\infty)$ and for any
$f\in L^1_{\mathrm{loc}}$
(the set of all locally integrable functions on $\mathbb{R}^n$), let
$$
\fint_E f(x)\,dx:=\frac{1}{|E|}\int_{E}f(x)\,dx
$$
and we denote by $\mathbf{1}_E$ the
\emph{characteristic function} of $E$ and by $E^\complement$
its \emph{complementary set}.
For any $x\in\mathbb{R}^n$ and $r\in(0,\infty)$, let
$$B(x,r):=\left\{y\in\mathbb{R}^n:  |x-y|<r\right\}$$
be the ball with center $x$ and radius $r$.
Let $\mathbb N:=\{1,2,\ldots\}$ and $\mathbb Z_+:=\mathbb N\cup\{0\}$.
For any
$\alpha=(\alpha_1,\ldots,\alpha_n)\in\mathbb Z_+^n$
and $x=(x_1,\ldots,x_n)$,
let $|\alpha|:=\alpha_1+\cdots+\alpha_n$
and $\partial^\alpha:=(\frac{\partial}{\partial x_1})^{\alpha_1}\cdots
(\frac{\partial}{\partial x_n})^{\alpha_n}$.
For any $p\in(0, \infty]$, let
$p':=\frac{p}{p-1}$ if $p\in(1, \infty]$
and $p':=\infty$ if $p\in(0, 1]$ be the \emph{conjugate index} of $p$.
We always denote by $C$ a \emph{positive constant}
which is independent of the main parameters involved,
but it may vary from line to line.
The symbol $A\lesssim B$ means that $A\le CB$ for
some positive constant $C$, while $A\sim B$ means $A\lesssim B\lesssim A$.
Also, for any $\alpha\in\mathbb{R}$, we use $\lfloor\alpha\rfloor$
(resp. $\lceil\alpha\rceil$) to denote the largest (resp. smallest) integer
not greater (resp. less) than $\alpha$.
Finally, in all proofs
we consistently retain the symbols
introduced in the original theorem (or related statement).

\section{Matrix-Weighted
Campanato Spaces\label{ad}}

For any $m,n\in\mathbb{N}$, let $M_{m,n}(\mathbb{C})$ denote
the set of all $m\times n$ complex-valued matrices
and we simply write $M_{m,m}(\mathbb{C})$ as $M_{m}(\mathbb{C})$.
The zero matrix in $M_{m,n}(\mathbb{C})$ is denoted by $O_{m,n}$
and $O_{m,m}$ is simply written as $O_m$.
For any $A\in M_m(\mathbb{C})$, let
$$\|A\|:=\sup_{\vec z\in\mathbb{C}^m,\,|\vec z|=1}|A\vec z|.$$
Let $A^*$ denote the \emph{conjugate transpose} of $A$.
A matrix $A\in M_m(\mathbb{C})$ is said to be \emph{positive definite}
if, for any $\vec z\in\mathbb{C}^m\setminus\{\vec{0}\}$, $\vec z^*A\vec z>0$,
and $A$ is said to be \emph{nonnegative definite} if,
for any $\vec z\in\mathbb{C}^m$, $\vec z^*A\vec z\geq0$.
Here, and thereafter, $\vec 0$ denotes the zero vector of $\mathbb C^m$.
The matrix $A\in M_m(\mathbb C)$ is called a \emph{unitary matrix} if $A^*A=I_m$,
where $I_m$ is the \emph{identity matrix}.

In the remainder of this article, we \emph{always fix} $m\in\mathbb{N}$.
Let $A\in M_m(\mathbb{C})$ be a positive definite matrix
and have eigenvalues $\{\lambda_i\}_{i=1}^m$.
Using \cite[Theorem 2.5.6(c)]{hj13},
we find that there exists a unitary matrix $U\in M_m(\mathbb{C})$ such that
\begin{equation}\label{500}
A=U\operatorname{diag}\,(\lambda_1,\ldots,\lambda_m)U^*.
\end{equation}
The following definition
can be found in \cite[(6.2.1)]{hj94}
(see also \cite[Definition 1.2]{h08}).

\begin{definition}
Let $A\in M_m(\mathbb{C})$ be a positive definite matrix
and have eigenvalues $\{\lambda_i\}_{i=1}^m$.
For any $\alpha\in\mathbb{R}$, define
$$A^\alpha:=U\operatorname{diag}(\lambda_1^\alpha,\ldots,
\lambda_m^\alpha)U^*,$$
where $U$ is the same as in \eqref{500}.
\end{definition}

\begin{remark}
From \cite[p.\,408]{hj94}, we infer that $A^\alpha$
is independent of the choices of the order of $\{\lambda_i\}_{i=1}^m$ and $U$,
and hence $A^\alpha$ is well-defined.
\end{remark}

Now, we recall the concept of matrix weights
(see, for example, \cite{nt96,tv97,v97}).

\begin{definition}\label{MatrixWeight}
A matrix-valued function $W:  \mathbb{R}^n\to M_m(\mathbb{C})$ is called
a \emph{matrix weight} if $W(x)$ is nonnegative definite for any $x\in\mathbb R^n$,
$W(x)$ is invertible for almost every $x\in{\mathbb{R}^n}$, and the
entries of $W$ are all locally integrable.
\end{definition}

Corresponding to the scalar Muckenhoupt $A_p({\mathbb{R}^n})$ class,
Nazarov and Treil \cite{nt96} and Volberg \cite{v97} originally
independently introduced $A_p({\mathbb{R}^n},\mathbb{C}^m)$-matrix
weights with $p\in (1,\infty)$. The following version of
$A_p({\mathbb{R}^n},\mathbb{C}^m)$-matrix weights was originally introduced
by Frazier and Roudenko \cite{fr04} with $p\in (0,1]$ and
Roudenko \cite{rou03} with $p\in (1,\infty)$ (see also \cite[p.\,490]{fr21}).

\begin{definition}\label{def ap}
Let $p\in(0,\infty)$. A matrix weight $W$ on $\mathbb{R}^n$
is called an $A_p({\mathbb{R}^n},\mathbb{C}^m)$-\emph{matrix weight}
if $W$ satisfies that, when $p\in(0,1]$,
$$
[W]_{A_p({\mathbb{R}^n},\mathbb{C}^m)}
:=\sup_{\mathrm{cube}\,Q}\mathop{\mathrm{\,ess\,sup\,}}_{y\in Q}
\fint_Q\left\|W^{\frac{1}{p}}(x)W^{-\frac{1}{p}}(y)\right\|^p\,dx
<\infty
$$
or that, when $p\in(1,\infty)$,
$$
[W]_{A_p({\mathbb{R}^n},\mathbb{C}^m)}
:=\sup_{\mathrm{cube}\,Q}
\fint_Q\left[\fint_Q\left\|W^{\frac{1}{p}}(x)W^{-\frac{1}{p}}(y)\right\|^{p'}
\,dy\right]^{\frac{p}{p'}}\,dx
<\infty.
$$
\end{definition}

When $m=1$, $A_{p}({\mathbb{R}^n},\mathbb{C}^m)$-matrix weights
in this case reduce to classical scalar Muckenhoupt
$A_{\max\{1,p\}}({\mathbb{R}^n})$ weights.
In what follows, if there exists no confusion, we denote
$A_p({\mathbb{R}^n},\mathbb{C}^m)$ simply by $A_p$.

To introduce matrix-weighted Campanato spaces,
we also need the concept of reducing operators,
one of the most important tools in
the study of matrix weights, which was originally introduced by
Volberg in \cite[(3.1)]{v97}.

\begin{definition}\label{reduce}
Let $p\in(0,\infty)$, $W$ be a matrix weight,
and $E\subset\mathbb{R}^n$ a bounded measurable set satisfying $|E|\in(0,\infty)$.
The matrix $A_E\in M_m(\mathbb{C})$ is called a
\emph{reducing operator} of order $p$ for $W$
if $A_E$ is positive definite and,
for any $\vec z\in\mathbb{C}^m$,
\begin{equation}\label{equ_reduce}
\left|A_E\vec z\right|
\sim\left[\fint_E\left|W^{\frac{1}{p}}(x)\vec z\right|^p\,dx\right]^{\frac{1}{p}},
\end{equation}
where the positive equivalence constants depend only on $m$ and $p$.
\end{definition}

Next, we introduce $\mathbb A$-matrix-weighted Campanato spaces
and matrix-weighted Campanato spaces.
For any $s\in\mathbb Z_+$,
let $\mathcal{P}_s$ denote the set of all polynomials on ${\mathbb{R}^n}$
of total degree not greater than $s$.

\begin{definition}\label{mwcs}
Let $p\in(0,\infty)$, $q\in[1,\infty)$, $s\in\mathbb Z_+$, and
$\mathbb A:=\{A_Q\}_{\text{cube }Q}$ be a family of positive definite matrices.
The $\mathbb A$-\emph{matrix-weighted Campanato space}
$\mathcal L_{p,q,s,\mathbb A}$
is defined to be the set of all
$\vec g\in (L^q_{\rm{loc}})^m$ such that
\begin{align*}
\left\|\vec g\right\|_{\mathcal L_{p,q,s,\mathbb A}}:=
\sup_{\mathrm{cube}\,Q}\inf_{\vec P\in(\mathcal{P}_s)^m}|Q|^{\frac1{q'}-\frac1p}
\left\{\int_{Q}\left|
A_Q^{-1}\left[\vec g(x)-\vec P(x)\right]
\right|^q\,dx\right\}^{\frac1q}<\infty.
\end{align*}
\end{definition}

\begin{remark}\label{re74}
Let $p\in(0,\infty)$, $q\in[1,\infty)$, $s\in\mathbb Z_+$, and $W\in A_p$.
\begin{enumerate}[\rm(i)]
\item
Assume that both $\mathbb A:=\{A_Q\}_{\rm{cube}\,Q}$
and $\mathbb A':=\{A'_Q\}_{\rm{cube}\,Q}$
are two families of reducing operators of order $p$ for $W$.
Then, using \cite[Propositions 2.14]{byyz},
we conclude that, for any cube $Q$,
\begin{align}\label{p0infty}
\left\|A_Q (A'_Q)^{-1}\right\|&\sim\left[\fint_Q\left\|
W^{\frac{1}{p}}(x)(A'_Q)^{-1}\right\|^p\,dx\right]^{\frac{1}{p}},
\end{align}
where the positive equivalence constants depend only on $m$ and $p$.
If $p\in(0,1]$, from \eqref{p0infty} and
\cite[Lemma 2.6(iii) and Proposition 2.4]{byyz},
we infer that
\begin{align}\label{ifp01}
\left\|A_Q (A'_Q)^{-1}\right\|\sim\left[\fint_Q\mathop{\mathrm{\,ess\,sup\,}}_{y\in Q}\left\|
W^{\frac{1}{p}}(x)W^{-\frac1p}(y)\right\|^p\,dx\right]^{\frac{1}{p}}
\lesssim[W]_{A_p}^{\frac1p},
\end{align}
where the implicit positive constants depend only on $m$ and $p$.
If $p\in(1,\infty)$, applying \eqref{p0infty} and
\cite[Lemma 2.6(ii)]{byyz},
we find that
\begin{align*}
\left\|A_Q (A'_Q)^{-1}\right\|\sim\left\{
\fint_Q\left[\fint_Q\left\|W^{\frac{1}{p}}(x)W^{-\frac{1}{p}}(y)\right\|^{p'}
\,dy\right]^{\frac{p}{p'}}\,dx\right\}^{\frac{1}{p}}
\lesssim[W]_{A_p}^{\frac1p},
\end{align*}
where the implicit positive constants depend only on $m$ and $p$.
This, together with \eqref{ifp01},
further implies that $\mathcal L_{p,q,s,\mathbb A}$ and
$\mathcal L_{p,q,s,\mathbb A'}$
coincide with equivalent norms. Based on this,
we define the \emph{matrix-weighted Campanato space} 
\begin{equation}\label{c1}
\mathcal L_{p,q,s,W}:=\mathcal L_{p,q,s,\mathbb A}.
\end{equation}

\item
By the definition of $\mathcal L_{p,q,s,W}$, we easily find that
$\|\vec f\|_{\mathcal L_{p,q,s,W}}=0$ if and only if $\vec f\in(\mathcal{P}_s)^m$.
\item
Let $m=1$.
For any cube $Q$, we write 
$$W(Q):=\int_QW(x)\,dx.$$
Observe that, in this case, by Definition
\ref{reduce}, one obviously has, for any cube
$Q$, 
\begin{equation}\label{a1}
A_Q\sim \left[\frac {W(Q)}{|Q|}\right]^{\frac 1p},
\end{equation}
where the positive equivalence constants
depend only on $p$; hence, for any
$g\in L^q_{\rm{loc}}$,
\begin{align*}
\left\|g\right\|_{\mathcal L_{p,q,s,W}}=
\sup_{\mathrm{cube}\,Q}\inf_{P\in \mathcal{P}_s}
\frac{|Q|}{W(Q)^\frac1p}\left\{\fint_{Q}\left|g(x)-P(x)
\right|^q\,dx\right\}^{\frac1q}.
\end{align*}
Thus, in this case, $\mathcal L_{p,q,s,W}$
reduces to the scalar weighted Campanato space,
which is a special case of \cite[Definition 1.11]{yyy} with
$X:=L^p_W$.
\end{enumerate}
\end{remark}

Now, we show an equivalent characterization of
$\mathcal L_{p,q,s,\mathbb A}$.
To this end, we introduce the following symbol.
Here, and thereafter, for any $s\in\mathbb Z_+$
and any compact set $E\subset\mathbb R^n$, let
$$\Pi_E^s:L^1(E)\rightarrow\mathcal{P}_s$$
be the \emph{natural projection} satisfying, for any
$f\in L^1(E)$ and $q\in\mathcal{P}_s$,
\begin{align}\label{fy}
\int_E\Pi_E^sf(x)q(x)\,dx=
\int_Ef(x)q(x)\,dx.
\end{align}
In what follows, for any $\vec f:=(f_1,\ldots,f_m)$,
we let 
$$\Pi^s_E\vec f:=\left(\Pi^s_Ef_1,\ldots,\Pi^s_Ef_m\right).$$

\begin{lemma}\label{lPi}
Let all the symbols be the same as in Definition \ref{mwcs}.
Then, for any $\vec g\in \mathcal L_{p,q,s,\mathbb A}$,
\begin{align*}
\left\|\vec g\right\|_{\mathcal L_{p,q,s,\mathbb A}}
\sim \sup_{\mathrm{cube}\,Q}|Q|^{\frac1{q'}-\frac1p}
\left\{\int_{Q}\left|A_Q^{-1}\left[\vec g(x)-\Pi^s_Q\vec g(x)\right]
\right|^q\,dx\right\}^{\frac1q},
\end{align*}
where the positive equivalence constants are independent of $\vec f$.
\end{lemma}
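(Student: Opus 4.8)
The plan is to prove the two estimates separately. The inequality ``$\lesssim$'' (indeed with constant $1$) is immediate: for each cube $Q$ we have $\Pi^s_Q\vec g\in(\mathcal P_s)^m$, so the infimum over $\vec P\in(\mathcal P_s)^m$ in the definition of $\|\vec g\|_{\mathcal L_{p,q,s,\mathbb A}}$ does not exceed its value at $\vec P=\Pi^s_Q\vec g$; taking the supremum over all cubes $Q$ gives that $\|\vec g\|_{\mathcal L_{p,q,s,\mathbb A}}$ is bounded from above by the right-hand side.

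For the reverse inequality, the key is to compare, on a fixed cube $Q$, the quantity $\|A_Q^{-1}[\vec g-\Pi^s_Q\vec g]\|_{L^q(Q)}$ with $\|A_Q^{-1}[\vec g-\vec P]\|_{L^q(Q)}$ for an arbitrary $\vec P\in(\mathcal P_s)^m$. By the triangle inequality in $L^q(Q)$ it suffices to bound $\|A_Q^{-1}[\vec P-\Pi^s_Q\vec g]\|_{L^q(Q)}$ by a constant multiple of $\|A_Q^{-1}[\vec g-\vec P]\|_{L^q(Q)}$. I would first record two elementary properties of $\Pi^s_Q$: (a) it acts componentwise and $\mathbb C$-linearly, and hence commutes with multiplication by the constant matrix $A_Q^{-1}$; and (b) by the uniqueness encoded in \eqref{fy} (the pairing $(q_1,q_2)\mapsto\int_Q q_1q_2$ is positive definite on $\mathcal P_s$ since $|Q|>0$), $\Pi^s_Q$ restricts to the identity on $(\mathcal P_s)^m$. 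Using (b) and then (a), $\vec P-\Pi^s_Q\vec g=\Pi^s_Q[\vec P-\vec g]$ and $A_Q^{-1}[\vec P-\Pi^s_Q\vec g]=\Pi^s_Q\big(A_Q^{-1}[\vec P-\vec g]\big)$. It then remains to prove the uniform boundedness statement: $\Pi^s_Q$ is bounded on $L^q(Q)$ with operator norm depending only on $n$, $s$, and $q$ (in particular independent of $Q$ and of $\mathbb A$). Granting this and applying it componentwise together with the equivalence of the $\ell^1$ and $\ell^2$ norms on $\mathbb C^m$, we obtain $\|A_Q^{-1}[\vec P-\Pi^s_Q\vec g]\|_{L^q(Q)}\lesssim\|A_Q^{-1}[\vec g-\vec P]\|_{L^q(Q)}$; multiplying by $|Q|^{1/q'-1/p}$, taking the infimum over $\vec P\in(\mathcal P_s)^m$, and then the supremum over all cubes $Q$ completes the argument.

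To establish the uniform boundedness of $\Pi^s_Q$ on $L^q(Q)$, I would reduce to the reference cube $Q_0:=(-\tfrac12,\tfrac12)^n$ by an affine change of variables: writing $\tau_Q(y):=c_Q+l(Q)y$, a direct check from \eqref{fy} (substitution by $\tau_Q$ preserves $\mathcal P_s$ and has constant Jacobian) gives $(\Pi^s_Qf)\circ\tau_Q=\Pi^s_{Q_0}(f\circ\tau_Q)$, whence $\|\Pi^s_Qf\|_{L^q(Q)}=l(Q)^{n/q}\|\Pi^s_{Q_0}(f\circ\tau_Q)\|_{L^q(Q_0)}$ and $\|f\|_{L^q(Q)}=l(Q)^{n/q}\|f\circ\tau_Q\|_{L^q(Q_0)}$, so it suffices to bound $\Pi^s_{Q_0}$ on $L^q(Q_0)$. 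On $Q_0$, fix a basis $\{e_\alpha\}_{|\alpha|\le s}$ of $\mathcal P_s$; the Gram matrix $G:=(\int_{Q_0}e_\alpha e_\beta)_{|\alpha|,|\beta|\le s}$ is positive definite, and solving the linear system determining the coefficients of $\Pi^s_{Q_0}f$ in this basis yields $\|\Pi^s_{Q_0}f\|_{L^\infty(Q_0)}\le C\|f\|_{L^1(Q_0)}$ with $C=C(n,s)$; since $q\ge1$ and $|Q_0|=1$, Hölder's inequality gives $\|\Pi^s_{Q_0}f\|_{L^q(Q_0)}\le C\|f\|_{L^1(Q_0)}\le C\|f\|_{L^q(Q_0)}$, as desired.

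The point requiring care — rather than a genuine obstacle — is the $Q$-uniformity of the bound for $\Pi^s_Q$, which the affine-rescaling argument resolves; everything else reduces, via the commutation of $\Pi^s_Q$ with the constant matrix $A_Q^{-1}$, to this single scalar estimate on a fixed cube, and in particular the resulting equivalence constants depend only on $n$, $m$, $s$, and $q$, hence are independent of $\vec g$ and of $\mathbb A$.
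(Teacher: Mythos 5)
Your proposal is correct and follows essentially the same route as the paper: after commuting the constant matrix $A_Q^{-1}$ with the projection (equivalently, absorbing it into the polynomial), the paper reduces the claim to the fact that $\Pi^s_Q$ gives a near-optimal polynomial approximation in $L^q(Q)$ uniformly in $Q$, which it imports from \cite[(2.12)]{jtyyz22iii}, exactly the reduction you perform. The only difference is that you prove this uniform near-optimality from scratch (affine rescaling to the unit cube plus the Gram-matrix bound $\|\Pi^s_{Q_0}f\|_{L^\infty(Q_0)}\lesssim\|f\|_{L^1(Q_0)}$), making the argument self-contained where the paper cites.
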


\begin{proof}
From \cite[(2.12)]{jtyyz22iii},
we deduce that
\begin{align*}
\left\|\vec g\right\|_{\mathcal L_{p,q,s,\mathbb A}}&=
\sup_{\mathrm{cube}\,Q}\inf_{A_Q\vec P\in(\mathcal{P}_s)^m}|Q|^{\frac1{q'}-\frac1p}
\left\{\int_{Q}\left|
A_Q^{-1}\vec g(x)-\vec P(x)
\right|^q\,dx\right\}^{\frac1q}\\
&\sim\sup_{\mathrm{cube}\,Q}|Q|^{\frac1{q'}-\frac1p}
\left\{\int_{Q}\left|
A_Q^{-1}\vec g(x)-\Pi^s_Q\left(A_Q^{-1}\vec g\right)(x)
\right|^q\,dx\right\}^{\frac1q}.
\end{align*}
This finishes the proof of Lemma \ref{lPi}.
\end{proof}

\begin{remark}
Let $m=1$ and $W\in A_1$. Using \eqref{a1} with $p=1$ 
(namely $A_Q\sim\frac{W(Q)}{|Q|}$) and using the definition
\eqref{c1} of $\mathcal L_{p,q,s,W}$ in Remark \ref{re74}(i),
we find that, for any
$g\in\mathcal L_{1,1,0,W}$,
\begin{align*}
\left\|g\right\|_{\mathcal L_{1,1,0,W}}\sim
\sup_{\mathrm{cube}\,Q}\inf_{P\in \mathcal{P}_s}
\frac{1}{W(Q)}\int_{Q}\left|g(x)-P(x)
\right|\,dx
\end{align*}
with the positive equivalence constants independent of $g$,
which, by Lemma \ref{lPi}, reduces to the
definition in \cite[(1.1)]{MW75}.
\end{remark}

In the remainder of this section, we show the
duality between matrix-weighted Hardy spaces and
matrix-weighted Campanato spaces.
To this end, we first give several concepts.
Let $\mathcal{S}$ be the set of
all Schwartz functions on $\mathbb R^n$ and
$\mathcal{S}'$ be the set of
all tempered distributions on $\mathbb R^n$.
For any $N\in\mathbb Z_+$, let
$$\mathcal{S}_N:=\left\{\phi\in\mathcal{S}:
\|\phi\|_{\mathcal{S}_N}\le1\right\},$$
where
$$\left\|\phi\right\|_{\mathcal{S}_N}:=
\sup_{\alpha\in\mathbb Z_+^n,\,|\alpha|\le N+1}
\sup_{x\in\mathbb R^n}(1+|x|)^{N+n+1}
\left|\partial^\alpha\phi(x)\right|.$$
For any function $\phi$ on $\mathbb R^n$ and $t\in(0,\infty)$, define
$\phi_t(\cdot):=\frac{1}{t^n}\phi(\frac{\cdot}{t})$.
Let $a\in(0,\infty)$ and $N\in\mathbb Z_+$.
Then, for any $\vec f\in(\mathcal S')^m$,
the \emph{matrix-weighted grand non-tangential maximal function}
$(M_{a,N}^*)^p_{W}(\vec f)$ of $\vec f$ is defined by setting, for any $x\in\mathbb R^n$,
\begin{align*}
\left(M_{a,N}^*\right)^p_{W}\left(\vec f\right)(x):=
\sup_{\phi\in\mathcal{S}_N}\sup_{t\in(0,\infty)}\sup_{y\in B(x,at)}
\left|W^{\frac{1}{p}}(x)\phi_t*\vec f(y)\right|.
\end{align*}

We now recall the following definition of
matrix-weighted Hardy spaces, which is exactly \cite[Definition 2.4]{bcyy}.

\begin{definition}\label{de-HW}
Let $p\in(0,\infty)$, $N\in\mathbb Z_+$, and $W$ be a matrix weight.
The \emph{matrix-weighted Hardy space} $H_{W,N}^p$
is defined to be the set of all $\vec f\in(\mathcal{S}')^m$ such that
$(M_{1,N}^*)^p_{W}(\vec f)\in L^p$
equipped with the quasi-norm
$\|\vec f\|_{H_{W,N}^p}
:=\|(M_{1,N}^*)^p_{W}(\vec f)\|_{L^p}.$
\end{definition}

Based on \cite[Theorem 2.10]{bcyy}, in what follows we simply write
$H_{W,N(W)}^p$ as $H_W^p$,
where $W\in A_p$ and $N(W):=\lfloor\frac{2n}p\rfloor+1$.
The following concept of atoms is precisely \cite[Definition 3.1]{bcyy}.
\begin{definition}\label{F-atom}
Let $p\in(0,\infty)$, $q\in[1,\infty]$, $s\in{\mathbb Z}_+$,
and $W$ be a matrix weight.
A function $\vec a$ is called a \emph{$(p,q,s)_{W}$-atom}
supported in a cube $Q$ if
\begin{enumerate}
\item[\rm (i)] $\operatorname{supp}\vec a\subset Q$,
\item[\rm (ii)]
$$\left\{\int_Q\left[\int_{Q}|W^{\frac1p}(y)\vec a(x)
|^p\,dy\right]^{\frac qp}\,dx\right\}^{\frac1q}\leq|Q|^{\frac1q},$$
\item[\rm (iii)]
for any $\gamma:=(\gamma_1,\ldots,\gamma_n)\in\mathbb{Z}_+^n$
with $|\gamma|:=\gamma_1+\cdots+\gamma_n\le s$,
$$\int_{\mathbb{R}^n}x^\gamma \vec a(x)\,dx=\vec{0},$$
where $x^\gamma:=x_1^{\gamma_1}\cdots x_n^{\gamma_n}$
for any $x:=(x_1,\ldots,x_n)\in\mathbb{R}^n$.
\end{enumerate}
\end{definition}

\begin{remark}\label{my}
Using \eqref{equ_reduce}, we find that Definition \ref{F-atom}(ii) is equal to
$$\left[\int_Q\left|A_Q\vec a(x)\right|^q\,dx\right]^\frac1q\lesssim|Q|^{\frac1q-\frac1p}$$
with the implicit positive constant independent of $\vec a$ and $Q$.
\end{remark}

Next, we present the following definition of
the finite atomic matrix-weighted Hardy space,
which is exactly \cite[Definition 4.1]{bcyy}.

\begin{definition}\label{de-fin}
Let $p\in(0,1]$, $q\in[1,\infty]$, $s\in\mathbb Z_+$, and $W$ be a matrix weight.
The \emph{finite atomic matrix-weighted Hardy space}
$H^{p,q,s}_{W,\rm{fin}}$ is defined to be
the set of all finite linear combinations of $(p,q,s)_W$-atoms
equipped with the quasi-norm
\begin{align*}
\left\|\vec f\right\|_{H^{p,q,s}_{W,\rm{fin}}}:=\inf\left\{
\left(\sum_{k=1}^{N}|\lambda_k|^p\right)^{\frac1p}:
N\in\mathbb N,\,\vec f=\sum_{k=1}^{N}\lambda_k\vec a_k,\,
\left\{\lambda_k\right\}_{k=1}^N\subset[0,\infty)
\right\},
\end{align*}
where the infimum is taken over all finite linear combinations of
$\vec f$ in terms of $(p,q,s)_W$-atoms.
\end{definition}

The main result of this section is the following
dual theorem of $H^p_W$.
In what follows, let $\mathcal C$ denote the set of all continuous
functions on $\mathbb R^n$.

\begin{theorem}\label{dual}
Let $p\in(0,1]$, $q\in[1,\infty)$,
$s\in[\lfloor n(\frac{1}{p}-1)\rfloor,\infty)\cap\mathbb Z_+$,
and $W\in A_p$.
Then the dual space of $H^p_W$, denoted by $(H^p_W)^*$, is $\mathcal L_{p,q,s,W}$
in the following sense:
\begin{enumerate}[\rm(i)]
\item
Suppose $\vec g\in\mathcal L_{p,q,s,W}$. Then the linear functional
\begin{align}\label{fl}
L_g: vec f\longmapsto L_g\left(\vec f\right):=\int_{\mathbb R^n}\vec f(x)\vec g(x)\,dx,
\end{align}
initially defined for any $\vec f\in H^{p,q',s}_{W,\rm{fin}}\cap\mathcal C$,
has a bounded extension to $H^p_W$.

\item
Conversely, any continuous linear functional on $H^p_W$
arises as in \eqref{fl} with a unique $\vec g\in\mathcal L_{p,q,s,W}$.
\end{enumerate}
\end{theorem}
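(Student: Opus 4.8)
The plan is to follow the classical Fefferman--Stein/Garc\'ia-Cuerva duality argument, adapted to the matrix-weighted setting via the reducing operators $\mathbb A=\{A_Q\}_{Q}$.

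\textbf{Part (i): from $\mathcal L_{p,q,s,W}$ to $(H^p_W)^*$.}
First I would fix $\vec g\in\mathcal L_{p,q,s,W}$ and, using Remark \ref{re74}(i), fix a family $\mathbb A=\{A_Q\}_Q$ of reducing operators of order $p$ for $W$ so that $\|\vec g\|_{\mathcal L_{p,q,s,\mathbb A}}\sim\|\vec g\|_{\mathcal L_{p,q,s,W}}$. For a $(p,q',s)_W$-atom $\vec a$ supported in a cube $Q$, I would estimate $|L_g(\vec a)|=|\int_Q\vec a\cdot\vec g\,dx|$. By the vanishing moments of $\vec a$ (Definition \ref{F-atom}(iii)) one may subtract any $\vec P\in(\mathcal P_s)^m$, giving
$$
\left|L_g\left(\vec a\right)\right|=\left|\int_Q\vec a(x)\cdot\left[\vec g(x)-\vec P(x)\right]\,dx\right|=\left|\int_Q\left(A_Q\vec a(x)\right)\cdot\left(A_Q^{-1}\left[\vec g(x)-\vec P(x)\right]\right)\,dx\right|,
$$
where I inserted $A_Q A_Q^{-1}=I_m$ and used that $A_Q$ is self-adjoint (positive definite). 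H\"older's inequality with exponents $q'$ and $q$, Remark \ref{my} (i.e. $\|A_Q\vec a\|_{L^q}\lesssim|Q|^{1/q-1/p}$ with $1/q'$ in place of $1/q$ after rescaling; more precisely the $q'$-version of Remark \ref{my}), and then the infimum over $\vec P$ yield $|L_g(\vec a)|\lesssim\|\vec g\|_{\mathcal L_{p,q,s,W}}$, uniformly in $\vec a$. Standard arguments (as in \cite{bcyy} via Lemmas \ref{W-F-atom} and \ref{finatom}) then extend $L_g$ to $H^{p,q',s}_{W,\rm fin}\cap\mathcal C$ with $\|L_g\|\lesssim\|\vec g\|_{\mathcal L_{p,q,s,W}}$; since this subspace is dense in $H^p_W$ (using the finite atomic characterization, Lemma \ref{finatom}), $L_g$ extends boundedly to all of $H^p_W$.

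\textbf{Part (ii): from $(H^p_W)^*$ to $\mathcal L_{p,q,s,W}$.}
Conversely, given $L\in(H^p_W)^*$, I would first show that for each cube $Q$ the space $L^{q'}_{A_Q}(Q):=\{\vec h\in(L^{q'}(Q))^m: \int_Q x^\gamma\vec h\,dx=\vec 0\ \forall|\gamma|\le s\}$ embeds continuously into $H^p_W$: any such $\vec h$, suitably normalized, is a constant multiple of a $(p,q',s)_W$-atom, and the embedding constant involves $\|A_Q^{-1}\|$ and powers of $|Q|$. Hence $L$ restricted to this subspace is a bounded functional on (a subspace of) $(L^{q'}(Q))^m$, so by the Riesz representation / Hahn--Banach theorem there exists $\vec g_Q\in(L^q(Q))^m$ with $L(\vec h)=\int_Q\vec h\cdot\vec g_Q\,dx$ for all admissible $\vec h$; $\vec g_Q$ is unique modulo $(\mathcal P_s)^m$. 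A compatibility/patching argument over an increasing exhaustion of $\mathbb R^n$ by cubes (adjusting by polynomials of degree $\le s$ on the overlaps, as in the classical BMO duality proof) produces a single $\vec g\in(L^q_{\rm loc})^m$, well-defined modulo $(\mathcal P_s)^m$, representing $L$ on $H^{p,q',s}_{W,\rm fin}\cap\mathcal C$. Finally, testing against atoms: for each cube $Q$ and each $\vec h\in L^{q'}_{A_Q}(Q)$ normalized so that $A_Q\vec h/(\text{const})$ is an atom, $|\int_Q\vec h\cdot(\vec g-\Pi^s_Q\vec g)\,dx|=|L(\vec h)|\le\|L\|\,\|\vec h\|_{H^p_W}$; taking the supremum over such $\vec h$ (a duality computation in $(L^{q'}(Q))^m$ against $(L^q(Q))^m$, with the reducing operator $A_Q$ passed from one side to the other) recovers exactly $|Q|^{1/q'-1/p}\{\int_Q|A_Q^{-1}[\vec g-\Pi^s_Q\vec g]|^q\}^{1/q}\lesssim\|L\|$, and by Lemma \ref{lPi} this gives $\|\vec g\|_{\mathcal L_{p,q,s,W}}\lesssim\|L\|$.

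\textbf{Main obstacle.}
The delicate point, as the authors themselves emphasize in the introduction, is the bookkeeping with the reducing operators: one must consistently move $A_Q$ between the atom side and the $\vec g$ side, and verify that the resulting quantity is precisely the $\mathcal L_{p,q,s,W}$-norm rather than some inequivalent variant. This requires the estimates \eqref{p0infty}--\eqref{ifp01} and the self-adjointness of $A_Q$, together with care about the range of $s$ (the hypothesis $s\ge\lfloor n(1/p-1)\rfloor$ guarantees enough vanishing moments for the atomic decomposition of $H^p_W$ to be available, via Lemma \ref{W-F-atom}). The second genuinely nontrivial step is the passage from finite atoms to all of $H^p_W$ in Part (i): one must know that the $H^p_W$-norm and the finite-atomic norm are comparable on $H^{p,q',s}_{W,\rm fin}\cap\mathcal C$, which is exactly the content of Lemma \ref{finatom} and is the reason the finite atomic characterization is invoked. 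The patching argument in Part (ii), while classical in spirit, also needs the uniqueness-modulo-$(\mathcal P_s)^m$ statement to be tracked carefully so that the final $\vec g$ is well-defined and unique in $\mathcal L_{p,q,s,W}$ (recall Remark \ref{re74}(ii): the null space of the norm is exactly $(\mathcal P_s)^m$).
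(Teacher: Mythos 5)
Your proposal is correct and follows essentially the same route as the paper's proof: part (i) by testing $L_g$ against $(p,q',s)_W$-atoms via H\"older's inequality, Remark \ref{my}, and the self-adjointness of $A_Q$, then passing through the finite atomic norm and the density of $H^{p,q',s}_{W,\rm{fin}}\cap\mathcal C$ together with Lemma \ref{finatom}; part (ii) by representing $L$ on the local moment-free spaces (the paper formalizes your ``pass $A_Q$ from one side to the other'' bookkeeping as the weighted spaces $L^{q'}_s(Q,A_Q)$ with dual $L^q(Q,A_Q^{-1})$, using Lemma \ref{hd} and Hahn--Banach), proving uniqueness modulo $(\mathcal P_s)^m$, patching over cubes, and identifying the quotient dual norm with the Campanato norm. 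The only minor slip is attributing the atom-normalization constant to $\|A_Q^{-1}\|$ and unweighted $L^{q'}$ norms, whereas the clean statement is in terms of $\|A_Q\vec h\|_{L^{q'}}$, exactly as your final duality computation (and the paper) in fact uses.
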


\begin{remark}
Let $m=1$ and $W\in A_1$. Using Remark \ref{re74}(iii), we find that,
$\mathcal L_{p,q,s,W}$ reduces to the scalar weighted Campanato space.
Theorem \ref{dual} in this case coincides with
\cite[Theorem 1.12]{yyy} with $X:=L^p_W$.
If we further assume that $W\equiv1$,
then $\mathcal L_{p,q,s,W}$ reduces to the scalar
unweighted Campanato space and Theorem \ref{dual} in this
case coincides with \cite[Theorem (2.7)]{TW80}.
\end{remark}

To prove this theorem, we need some technical lemmas.
The following lemma is the reverse H\"older inequality
associated with matrix weights, which is a direct consequence of
\cite[Proposition 2.13(ii)]{bhyyNew} by using the fact that
any matrix $A_p$ weight is also a matrix $A_{p,\infty}$ weight
(see \cite[Definition 2.7]{bhyyNew} for the definition of matrix 
$A_{p,\infty}$ weights).

\begin{lemma}\label{86}
Let $p\in(0,\infty)$, $W\in A_{p}$,
and $\mathbb A:=\{A_Q\}_{Q\in\mathscr{Q}}$ be a family of
reducing operators of order $p$ for $W$.
Then there exist a positive constant $r\in(1,\infty)$, depending only on $n$, $m$, $p$,
and $[W]_{A_{p}}$, and a positive constant $C$,
depending only on $m$ and $p$, such that
\begin{equation}\label{kappa}
\sup_{\mathrm{cube}\,Q}
\left[\fint_Q\left\|W^{\frac{1}{p}}(x)A_Q^{-1}\right\|^{pr}\,dx\right]^{\frac{1}{pr}}
\le C.
\end{equation}
\end{lemma}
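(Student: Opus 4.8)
The plan is to obtain \eqref{kappa} by combining two facts: the near-normalization of reducing operators, and the reverse H\"older (self-improvement) inequality for matrix $A_{p,\infty}$ weights. First I would note that, applying \eqref{p0infty} (equivalently, \cite[Proposition 2.14]{byyz}) with $\mathbb A'=\mathbb A$, one has, for every cube $Q$,
$$
\left[\fint_Q\left\|W^{\frac1p}(x)A_Q^{-1}\right\|^{p}\,dx\right]^{\frac1p}\sim\left\|A_QA_Q^{-1}\right\|=1,
$$
where the positive equivalence constants depend only on $m$ and $p$. Consequently, it suffices to find an exponent $r\in(1,\infty)$, depending only on $n$, $m$, $p$, and $[W]_{A_p}$, and a constant depending only on $m$ and $p$, such that, uniformly in $Q$,
$$
\left[\fint_Q\left\|W^{\frac1p}(x)A_Q^{-1}\right\|^{pr}\,dx\right]^{\frac{1}{pr}}\lesssim\left[\fint_Q\left\|W^{\frac1p}(x)A_Q^{-1}\right\|^{p}\,dx\right]^{\frac1p},
$$
that is, a reverse H\"older bound for the scalar averages of $\|W^{1/p}(\cdot)A_Q^{-1}\|^{p}$.

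Next I would invoke that every matrix $A_p$ weight is a matrix $A_{p,\infty}$ weight in the sense of \cite[Definition 2.7]{bhyyNew}, with the quantitative control $[W]_{A_{p,\infty}}\lesssim[W]_{A_p}$; this is immediate from comparing the two definitions, the $A_{p,\infty}$ condition being the weaker one. With $W\in A_{p,\infty}$ at hand, \cite[Proposition 2.13(ii)]{bhyyNew} yields precisely the displayed reverse H\"older inequality, producing an exponent $r>1$ controlled by $n$, $m$, $p$, and $[W]_{A_{p,\infty}}$ (hence by $[W]_{A_p}$), while the implicit constant depends only on $m$ and $p$. Combining this with the normalization above and taking the $(pr)$-th root gives \eqref{kappa} with $C$ depending only on $m$ and $p$, as claimed.

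The only point that needs care is the bookkeeping about which matrix plays the role of the ``reducing operator'' in \cite[Proposition 2.13(ii)]{bhyyNew}: that proposition may be phrased in terms of an averaging matrix appearing in the definition of $A_{p,\infty}$ rather than the reducing operator $A_Q$ of order $p$ used here. Reconciling the two requires the comparability of such matrices, again furnished by \cite{byyz} (or \cite{bhyyNew}) with constants depending only on $m$ and $p$; since every equivalence used along the way has this property, the final constant $C$ indeed does not depend on $[W]_{A_p}$. I do not expect any genuine difficulty beyond this routine matching of normalizations.
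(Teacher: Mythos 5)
Your proposal follows essentially the same route as the paper: the paper gives no detailed argument but simply derives Lemma \ref{86} as a direct consequence of \cite[Proposition 2.13(ii)]{bhyyNew} together with the fact that any matrix $A_p$ weight is a matrix $A_{p,\infty}$ weight, which is exactly the core of your argument. Your additional normalization step via \eqref{p0infty} (showing the $L^p$ average of $\|W^{1/p}(\cdot)A_Q^{-1}\|$ is comparable to $1$) is just a routine fleshing out of that citation and is consistent with the paper.
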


Let
\begin{align}\label{r_w}
r_W:=\sup\left\{r:  r\text{ satisfies \eqref{kappa}}\right\}.
\end{align}

The following lemma can also be directly deduced from \cite[Theorem 3.5]{bcyy}
by using the facts that any matrix $A_p$ weight is also a matrix $A_{p,\infty}$ weight
and the upper dimension of matrix $A_p$ weights when $p\in(0,1]$
equals $0$ (see \cite[Definition 2.8]{bcyy} for the definition of the 
upper dimension of matrix $A_{p,\infty}$ weights).

\begin{lemma}\label{W-F-atom}
Let $p\in(0,1]$, $W\in A_p$, and
$s\in[\lfloor n(\frac{1}{p}-1)\rfloor,\infty)\cap\mathbb Z_+$.
Then the following two statements hold.
\begin{enumerate}[{\rm(i)}]
\item Let $q\in(\max\{1,\frac{r_W}{r_W-1}p\},\infty]$
with $r_W$ the same as in \eqref{r_w}.
For any $\{\lambda_k\}_{k\in{\mathbb Z}}\in l^p$ and
any $(p,q,s)_W$-atoms $\{\vec a_k\}_{k\in\mathbb Z}$,
there exists $\vec f\in H^p_W$ such that
\begin{equation}\label{e2.19}
\vec f=\sum_{k\in\mathbb Z}\lambda_k\vec a_k
\end{equation}
in both $H^p_W$ and $(\mathcal{S}')^m$.
Moreover, there exists a positive constant $C$, independent of both
$\{\lambda_k\}_{k\in\mathbb Z}$ and $\{\vec a_k\}_{k\in\mathbb Z}$, such that
$$\left\|\vec f\right\|_{H^p_W}\le
C\|\{\lambda_k\}_{k\in\mathbb Z}\|_{l^p}.$$

\item
For any $\vec f\in H^p_W$, there exist
a sequence $\{\lambda_k\}_{k\in\mathbb Z}\in l^p$
and a sequence of $(p,\infty,s)_W$-atoms $\{\vec a_k\}_{k\in\mathbb Z}$ such that
\eqref{e2.19} holds in both $H^p_W$ and $(\mathcal{S}')^m$.
Moreover, there exists a positive constant $C$, independent of $\vec f$,
such that
$$\|\{\lambda_k\}_{k\in\mathbb Z}\|_{l^p}
\le C\left\|\vec f\right\|_{H^p_W}.$$
\end{enumerate}
\end{lemma}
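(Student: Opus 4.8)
The plan is to prove both parts by the classical atomic machinery for Hardy spaces, carrying the matrix weight through the reducing operators $A_Q$; alternatively, and more cheaply, the whole statement can be read off from \cite[Theorem 3.5]{bcyy} once one checks that every matrix $A_p$ weight is a matrix $A_{p,\infty}$ weight whose upper dimension equals $0$ when $p\in(0,1]$, which is exactly why the regularity threshold collapses to $s\ge\lfloor n(\frac1p-1)\rfloor$. Below I describe the direct argument.

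For part (i), everything rests on the uniform bound $\|\vec a\|_{H^p_W}\lesssim1$ for a $(p,q,s)_W$-atom $\vec a$ supported in a cube $Q$, with implicit constant depending only on $n,m,p,q,s,[W]_{A_p}$. Fixing $\phi\in\mathcal S_{N(W)}$, $t\in(0,\infty)$, $x\in\mathbb R^n$ and $y\in B(x,t)$, I would split $\mathbb R^n=2\sqrt n\,Q\cup(2\sqrt n\,Q)^\complement$. On $2\sqrt n\,Q$, writing $W^{\frac1p}(x)=[W^{\frac1p}(x)A_Q^{-1}]A_Q$ bounds $|W^{\frac1p}(x)\phi_t*\vec a(y)|$ by $\|W^{\frac1p}(x)A_Q^{-1}\|$ times a scalar Hardy--Littlewood maximal function of $|A_Q\vec a|$; then H\"older's inequality with an exponent $r<r_W$, the reverse H\"older bound \eqref{kappa} of Lemma \ref{86}, the $L^q$-boundedness of the maximal operator, and the size estimate $\|A_Q\vec a\|_{L^q(Q)}\lesssim|Q|^{\frac1q-\frac1p}$ of Remark \ref{my} yield $\int_{2\sqrt n\,Q}[(M_{1,N(W)}^*)^p_W(\vec a)]^p\lesssim1$, the hypothesis $q>\max\{1,\frac{r_W}{r_W-1}p\}$ (with $r_W$ as in \eqref{r_w}) being precisely what permits the choice of $r$ that closes this. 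On $(2\sqrt n\,Q)^\complement$ I would use the moment conditions: subtracting from $z\mapsto\phi_t(\frac{y-z}{t})$ its Taylor polynomial of order $\min\{s,N(W)\}$ at $c_Q$ and exploiting the Schwartz decay of $\phi$ gives $|\phi_t*\vec a(y)|\lesssim|Q|^{-\frac1p}(l(Q)/|y-c_Q|)^{n+\min\{s,N(W)\}+1}$, so that, once more inserting $W^{\frac1p}(x)A_Q^{-1}$, decomposing $(2\sqrt n\,Q)^\complement$ into dyadic annuli about $Q$, and using \eqref{kappa} (together with the uniform comparability of reducing operators of nested cubes for $A_p$ weights with $p\le1$), the integral over $(2\sqrt n\,Q)^\complement$ is finite exactly because $s\ge\lfloor n(\frac1p-1)\rfloor$ and because $N(W)=\lfloor\frac{2n}p\rfloor+1$ already exceeds $n(\frac1p-1)$. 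Granting $\|\vec a\|_{H^p_W}\lesssim1$, the sublinearity of the grand maximal function and the $p$-subadditivity of $\|\cdot\|_{L^p}^p$ for $p\le1$ give, for any finite $F$, $\|\sum_{k\in F}\lambda_k\vec a_k\|_{H^p_W}^p\le\sum_{k\in F}|\lambda_k|^p\|\vec a_k\|_{H^p_W}^p\lesssim\sum_{k\in F}|\lambda_k|^p$, so the partial sums of $\sum_k\lambda_k\vec a_k$ are Cauchy in $H^p_W$ and converge to some $\vec f$ with $\|\vec f\|_{H^p_W}\lesssim\|\{\lambda_k\}_{k\in\mathbb Z}\|_{l^p}$; since $H^p_W$ embeds continuously into $(\mathcal S')^m$ (the grand maximal function dominates the pairing against a single Schwartz function), the series also converges in $(\mathcal S')^m$, giving \eqref{e2.19}.

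For part (ii), I would run the Fefferman--Stein--Calder\'on--Zygmund decomposition adapted to matrix weights. For $\vec f\in H^p_W$ and $j\in\mathbb Z$, set $\Omega_j:=\{x\in\mathbb R^n:(M_{1,N(W)}^*)^p_W(\vec f)(x)>2^j\}$, take a Whitney covering $\{Q_{j,i}\}_i$ of the open set $\Omega_j$ together with a subordinate smooth partition of unity $\{\xi_{j,i}\}_i$, and subtract from $\vec f\,\xi_{j,i}$ the unique $\vec P_{j,i}\in(\mathcal P_s)^m$ matching its moments up to order $s$. Telescoping the good parts across the levels $j$ and regrouping the bad parts $(\vec f-\vec P_{j,i})\xi_{j,i}$ expresses $\vec f=\sum_k\lambda_k\vec a_k$ in $(\mathcal S')^m$, where each $\vec a_k$ is, after normalization, a $(p,\infty,s)_W$-atom supported in a fixed dilate of some $Q_{j,i}$; checking the atomic size bound $\|A_{Q_k}\vec a_k\|_{L^\infty}\lesssim|Q_k|^{-\frac1p}$ is where \eqref{kappa} and the $A_p$ structure of $W$ enter, the scalar $W^{\frac1p}$-averages being replaced systematically by the reducing operators $A_Q$. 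One then verifies $\sum_k|\lambda_k|^p\sim\sum_j2^{jp}|\Omega_j|\sim\|(M_{1,N(W)}^*)^p_W(\vec f)\|_{L^p}^p=\|\vec f\|_{H^p_W}^p$ and upgrades $(\mathcal S')^m$-convergence to $H^p_W$-convergence by applying the tail estimate from part (i).

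I expect the main obstacle to lie in part (ii): controlling the matrix weight while passing from the distribution $\vec f$ to genuine pointwise atoms forces one to have in hand both the $L^q_W$-boundedness of a suitable matrix Hardy--Littlewood maximal operator and the reverse H\"older inequality \eqref{kappa}, and the thresholds $q>\max\{1,\frac{r_W}{r_W-1}p\}$ and $s\ge\lfloor n(\frac1p-1)\rfloor$ are exactly what make the endpoint estimates in (i) and (ii) close. A further delicate point is the continuous embedding $H^p_W\hookrightarrow(\mathcal S')^m$ and the compatibility of the two modes of convergence in \eqref{e2.19}. For these reasons the cleanest implementation is to invoke \cite[Theorem 3.5]{bcyy} after the two observations noted at the outset, rather than to rebuild this machinery in full.
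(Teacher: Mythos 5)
Your recommended route—deducing the lemma from \cite[Theorem 3.5]{bcyy} via the two observations that every matrix $A_p$ weight is a matrix $A_{p,\infty}$ weight and that its upper dimension equals $0$ when $p\in(0,1]$—is exactly how the paper proves it, so the proposal is correct and takes essentially the same approach. The direct argument you sketch is in effect a reconstruction of the proof of the cited result and is not needed here.
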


The following lemma is precisely \cite[Theorem 4.2]{bcyy}.

\begin{lemma}\label{finatom}
Let $p\in(0,1]$, $W\in A_p$,
and $s\in[\lfloor n(\frac{1}{p}-1)\rfloor,\infty)\cap\mathbb Z_+$.
Then $\|\cdot\|_{H^{p,\infty,s}_{W,\rm{fin}}}$ and
$\|\cdot\|_{H^p_W}$ are equivalent quasi-norms
on the space $H^{p,\infty,s}_{W,\rm{fin}}\cap\mathcal C$.
\end{lemma}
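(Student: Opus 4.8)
The plan is to prove the two quasi-norm estimates separately on $H^{p,\infty,s}_{W,\rm fin}\cap\mathcal C$.

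The estimate $\|\vec f\|_{H^p_W}\lesssim\|\vec f\|_{H^{p,\infty,s}_{W,\rm fin}}$ is immediate. Indeed, any $\vec f\in H^{p,\infty,s}_{W,\rm fin}$ is, by definition, a finite sum $\vec f=\sum_{k=1}^N\lambda_k\vec a_k$ of $(p,\infty,s)_W$-atoms, which is in particular a finite, hence $l^p$-summable, combination of $(p,\infty,s)_W$-atoms; since $\infty>\max\{1,\frac{r_W}{r_W-1}p\}$ with $r_W$ as in \eqref{r_w}, Lemma \ref{W-F-atom}(i) applies and gives a positive constant $C$, independent of the decomposition, such that $\|\vec f\|_{H^p_W}\le C(\sum_{k=1}^N|\lambda_k|^p)^{1/p}$. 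Taking the infimum over all finite atomic representations of $\vec f$ yields $\|\vec f\|_{H^p_W}\le C\|\vec f\|_{H^{p,\infty,s}_{W,\rm fin}}$.

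For the reverse estimate, fix $\vec f\in H^{p,\infty,s}_{W,\rm fin}\cap\mathcal C$ and, by homogeneity, assume $\|\vec f\|_{H^p_W}=1$; being a finite sum of $(p,\infty,s)_W$-atoms, $\vec f$ is continuous, supported in some cube $Q_0$, satisfies $\int_{\mathbb R^n}x^\gamma\vec f(x)\,dx=\vec 0$ for all $\gamma\in\mathbb Z_+^n$ with $|\gamma|\le s$, and obeys $|W^{\frac1p}(\cdot)\vec f(\cdot)|\le(M^{*}_{1,N(W)})^p_W(\vec f)(\cdot)\in L^p$. I would revisit the Calder\'on--Zygmund construction underlying the proof of Lemma \ref{W-F-atom}(ii). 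Setting $\Omega_k:=\{x\in\mathbb R^n:(M^{*}_{1,N(W)})^p_W(\vec f)(x)>2^k\}$, a set of finite measure for every $k\in\mathbb Z$, one writes, for each $k$, $\vec f=g_k+\sum_i b_{k,i}$, where $g_k=\vec f$ off $\Omega_k$ and $|A_Qg_k(x)|\lesssim[W]_{A_p}^{\frac1p}2^k$ for a.e.\ $x$ in the cube $Q$ containing $\operatorname{supp}g_k$, each $b_{k,i}$ is supported in a cube $Q_{k,i}\subset\Omega_k$ with $\int_{\mathbb R^n}x^\gamma b_{k,i}(x)\,dx=\vec 0$ for $|\gamma|\le s$, and, telescoping, $\vec f=\sum_{k\in\mathbb Z}(g_{k+1}-g_k)=\sum_{k\in\mathbb Z}\sum_i\lambda_{k,i}\vec a_{k,i}$ with $(p,\infty,s)_W$-atoms $\vec a_{k,i}$ supported in $Q_{k,i}$, the series converging in $H^p_W$, and $\sum_{k\in\mathbb Z}\sum_i|\lambda_{k,i}|^p\le C\|\vec f\|_{H^p_W}^p=C$. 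Since this $l^p$-sum converges, its high-level tail $\sum_{k>k_1,i}|\lambda_{k,i}|^p$ and its low-level tail $\sum_{k<k_0,i}|\lambda_{k,i}|^p$ are as small as desired for $k_1$ large and $k_0$ large negative; moreover, for fixed $k_0<k_1$, only finitely many cubes $Q_{k,i}$ with $k_0\le k\le k_1$ have $l(Q_{k,i})\ge\delta$ (because $\sum_i|Q_{k,i}|\lesssim|\Omega_k|<\infty$ at each such level), so the remaining ``fine-scale'' tail $\sum_{k_0\le k\le k_1,\,l(Q_{k,i})<\delta}|\lambda_{k,i}|^p$ is small for $\delta$ small.

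The heart of the proof is to convert this a priori infinite decomposition into a genuinely finite one whose $l^p$-coefficient norm is bounded independently of $\vec f$. I would retain the finitely many atoms $\vec F:=\sum_{k_0\le k\le k_1,\,l(Q_{k,i})\ge\delta}\lambda_{k,i}\vec a_{k,i}$, whose coefficients have $l^p$-sum $\lesssim\|\vec f\|_{H^p_W}^p\lesssim1$, and then dispose of the residual $\vec f-\vec F$, which comprises the low-level part $g_{k_0}$, the high-level part $\vec f-g_{k_1+1}$, and the discarded fine cubes. Here the continuity of $\vec f$ enters decisively: because the cubes $Q_{k,i}$ at level $k$ shrink as $k\to\infty$, one has $\|\vec f-g_k\|_{L^\infty}\to0$, so that $\vec f-g_{k_1+1}$ has fixed compact support and arbitrarily small supremum norm and hence is a multiple of a $(p,\infty,s)_W$-atom whose coefficient tends to $0$ as $k_1\to\infty$; the low-level part $g_{k_0}$ has $\|g_{k_0}\|_{H^p_W}\to0$ as $k_0\to-\infty$ and, together with the bound $|A_Qg_{k_0}(x)|\lesssim[W]_{A_p}^{\frac1p}2^{k_0}$ and the reverse H\"older inequality of Lemma \ref{86} (which controls $\|W^{\frac1p}(\cdot)A_Q^{-1}\|$), is likewise realized with a controlled coefficient; and the discarded fine cubes, having small $l^p$-coefficient mass, are handled via Lemma \ref{W-F-atom}(i). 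Choosing $k_0$, $k_1$, and $\delta$ appropriately, and disposing of what is left by the same recombination applied to $\vec f-\vec F$ (which is again a finite atomic combination, bounded, compactly supported, with vanishing moments up to order $s$, and of arbitrarily small $H^p_W$-norm), one arrives at a representation of $\vec f$ as a finite sum of $(p,\infty,s)_W$-atoms with $l^p$-coefficient norm $\le C$; that is, $\|\vec f\|_{H^{p,\infty,s}_{W,\rm fin}}\le C$. The principal obstacle is exactly this truncation step: producing a genuinely finite decomposition while keeping the normalization constants of the recombined pieces under control when $p\in(0,1)$, where the crude ``lump into one atom'' estimates degenerate; overcoming it requires using in concert the uniform convergence afforded by the continuity of $\vec f$, the vanishing moments, the compact support, and the reverse H\"older control of Lemma \ref{86}.
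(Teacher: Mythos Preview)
The paper does not prove this lemma; it simply records that the statement is \cite[Theorem~4.2]{bcyy}. Your sketch follows the standard strategy for results of this type (cf.\ \cite{b03} for the anisotropic setting): rerun the Calder\'on--Zygmund construction behind Lemma~\ref{W-F-atom}(ii) and use the uniform continuity of $\vec f$ to convert the infinite atomic sum into a finite one whose $l^p$-coefficient norm is controlled by $\|\vec f\|_{H^p_W}$. This is almost certainly the route taken in \cite{bcyy} as well, so at the level of overall approach there is nothing to compare.

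That said, your truncation scheme is more elaborate than the standard execution, and the last step---``disposing of what is left by the same recombination applied to $\vec f-\vec F$''---is circular as written. The cleaner line is: first observe that every cube $Q_{k,i}$ arising in the decomposition lies in a fixed dilate $\widetilde Q_0$ of $\operatorname{supp}\vec f$; then prove that the partial sums converge to $\vec f$ \emph{uniformly} on $\widetilde Q_0$ (this is where the continuity of $\vec f$ and the shrinking of $|\Omega_k|$ enter). Hence for any $\epsilon>0$ there is a finite index set $S$ with $\bigl\|\vec f-\sum_{(k,i)\in S}\lambda_{k,i}\vec a_{k,i}\bigr\|_{L^\infty}<\epsilon$, and since this remainder is supported in $\widetilde Q_0$ with vanishing moments up to order $s$, it is at most $C\epsilon\|A_{\widetilde Q_0}\|\,|\widetilde Q_0|^{1/p}$ times a single $(p,\infty,s)_W$-atom; choosing $\epsilon$ so that this coefficient is at most $1$ finishes the argument with no iteration. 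Your three-way split into low-level, high-level, and fine-scale tails, together with the proposed handling of the low-level piece via $\|g_{k_0}\|_{H^p_W}\to0$, is not how the argument is normally run and would need substantial additional justification to be made rigorous.
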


The following lemma is exactly \cite[Lemma 2.29]{bhyy1}.

\begin{lemma}\label{p01}
Let $p\in(0,\infty)$, let $W\in A_p$ have the $A_p$-dimension $d\in[0,n)$,
and let $\{A_Q\}_{\mathrm{cube}\,Q}$ be a family of
reducing operators of order $p$ for $W$. If $p\in(1,\infty)$,
let further $\widetilde W:=W^{-\frac 1{p-1}}$
(which belongs to $A_{p'}$) have the $A_{p'}$-dimension $\widetilde d\in[0,n)$,
while, if $p\in(0,1]$, let $\widetilde d:=0$.
Here the $A_p$-dimension and the $A_{p'}$-dimension are the same as in
\cite[Definition 2.23]{bhyy1}.
Then there exists a positive constant $C$ such that,
for any cubes $Q$ and $R$ in $\mathbb{R}^n$,
\begin{equation}\label{e2.9}
\left\|A_QA_R^{-1}\right\|
\leq C\max\left\{\left[\frac{\ell(R)}{\ell(Q)}\right]^{\frac{d}{p}},
\left[\frac{\ell(Q)}{\ell(R)}\right]^{\widetilde d(1-\frac{1}{p})}\right\}
\left[1+\frac{|c_Q-c_R|}{\ell(Q)\vee\ell(R)}\right]^{\frac{d}{p}+\widetilde d(1-\frac{1}{p})}.
\end{equation}
\end{lemma}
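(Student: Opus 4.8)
The plan is to deduce the general, possibly non-nested, case from two nested-cube estimates by routing through a sufficiently large cube $S$ that contains both $Q$ and $R$, and then to invoke the defining property of the $A_p$-dimension of $W$ (and, when $p\in(1,\infty)$, of the $A_{p'}$-dimension of $\widetilde W$), together with the duality between reducing operators of order $p$ for $W$ and of order $p'$ for $\widetilde W$.

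First, given cubes $Q$ and $R$, I would fix a cube $S$ with $Q\cup R\subset S$ and $\ell(S)\sim\ell(Q)\vee\ell(R)+|c_Q-c_R|$, the implicit constants depending only on $n$; such $S$ clearly exists. Writing $A_QA_R^{-1}=(A_QA_S^{-1})(A_SA_R^{-1})$ and using the submultiplicativity of $\|\cdot\|$, it suffices to bound the two factors $\|A_QA_S^{-1}\|$ and $\|A_SA_R^{-1}\|$, now in the favorable situation $Q\subset S$ and $R\subset S$.

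Second, since $Q\subset S$, the definition of the $A_p$-dimension $d$ of $W$ (see \cite[Definition 2.23]{bhyy1}), combined with a routine iteration over a dyadic tower of cubes between $Q$ and $S$, gives $\|A_QA_S^{-1}\|\lesssim[\ell(S)/\ell(Q)]^{d/p}$. For the factor $\|A_SA_R^{-1}\|$ I would split according to the range of $p$. When $p\in(0,1]$, the $A_p$ condition for $W$ itself, together with \eqref{equ_reduce} exactly as in the derivation of \eqref{ifp01} in Remark \ref{re74}(i), yields $\|A_SA_R^{-1}\|\lesssim[W]_{A_p}^{1/p}\lesssim1$, which matches the convention $\widetilde d:=0$. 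When $p\in(1,\infty)$, I would fix a family $\{A_Q'\}_{\mathrm{cube}\,Q}$ of reducing operators of order $p'$ for $\widetilde W$ and use the operator-form $A_p$ identity $\|A_QA_Q'\|\sim1\sim\|(A_QA_Q')^{-1}\|$ (valid for $W\in A_p$, with constants depending only on $m$, $p$, and $[W]_{A_p}$); since $A_Q$ and $A_Q'$ are Hermitian one has $\|A_QA_Q'\|=\|A_Q'A_Q\|$, whence $\|BA_Q^{-1}\|\sim\|BA_Q'\|$ and $\|A_Q^{-1}B\|\sim\|A_Q'B\|$ for every matrix $B$. Applying this twice reduces the second factor to $\|A_SA_R^{-1}\|\sim\|A_R'(A_S')^{-1}\|$, and then the definition of the $A_{p'}$-dimension $\widetilde d$ of $\widetilde W$, applied to the nested pair $R\subset S$, gives $\|A_R'(A_S')^{-1}\|\lesssim[\ell(S)/\ell(R)]^{\widetilde d/p'}=[\ell(S)/\ell(R)]^{\widetilde d(1-1/p)}$.

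Finally, I would multiply the two bounds and insert $\ell(S)\sim(\ell(Q)\vee\ell(R))[1+|c_Q-c_R|/(\ell(Q)\vee\ell(R))]$, pulling out the common factor $[1+|c_Q-c_R|/(\ell(Q)\vee\ell(R))]^{d/p+\widetilde d(1-1/p)}$; it then remains to observe that
$$\left[\frac{\ell(Q)\vee\ell(R)}{\ell(Q)}\right]^{\frac dp}\left[\frac{\ell(Q)\vee\ell(R)}{\ell(R)}\right]^{\widetilde d(1-\frac1p)}=\max\left\{\left[\frac{\ell(R)}{\ell(Q)}\right]^{\frac dp},\left[\frac{\ell(Q)}{\ell(R)}\right]^{\widetilde d(1-\frac1p)}\right\},$$
which is immediate on splitting into the cases $\ell(Q)\le\ell(R)$ and $\ell(R)\le\ell(Q)$: in each case one of the two ratios on the left equals $1$ and the other is $\ge1$, so the product is exactly the larger of the two terms on the right. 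This yields the asserted estimate. The step I expect to be the main obstacle is the case $p\in(1,\infty)$, namely correctly setting up and applying the duality $A_Q^{-1}\sim A_Q'$ between reducing operators of $W$ (order $p$) and of $\widetilde W$ (order $p'$) and transferring the nested-cube control from $\{A_Q'\}$ back to $\{A_Q\}$; the remaining steps are bookkeeping with the submultiplicativity of the operator norm and the elementary identity displayed above.
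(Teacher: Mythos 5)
The paper does not actually prove Lemma \ref{p01}: it is quoted verbatim as \cite[Lemma 2.29]{bhyy1}, so there is no internal argument to compare against, and your proposal has to be judged on its own. On its merits it is correct and follows the expected route: factoring $A_QA_R^{-1}=(A_QA_S^{-1})(A_SA_R^{-1})$ through a common cube $S\supset Q\cup R$ with $\ell(S)\sim\ell(Q)\vee\ell(R)+|c_Q-c_R|$, using the $A_p$-dimension of $W$ for the nested pair $Q\subset S$, handling $\|A_SA_R^{-1}\|$ by the $A_p$ condition itself when $p\in(0,1]$ (the computation in \eqref{ifp01} indeed transfers verbatim to nested cubes $R\subset S$) and by the dual reducing operators together with the $A_{p'}$-dimension of $\widetilde W$ when $p\in(1,\infty)$, and finishing with the elementary identity converting the two ratio factors into the maximum in \eqref{e2.9}; the duality facts $\|A_QA_Q'\|\sim1\sim\|(A_QA_Q')^{-1}\|$ you invoke are the standard operator characterization of matrix $A_p$ and are available in the cited framework. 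Two small points deserve care. First, the phrase ``routine iteration over a dyadic tower'' is misleading: iterating a one-step doubling bound would multiply constants and degrade the exponent $d/p$; what one actually does is apply the $A_p$-dimension condition (which is uniform in the dilation parameter $j$) once at the scale $2^j\sim\ell(S)/\ell(Q)$ to the concentric dilate $2^jQ\supset S$, and then control $\|A_{2^jQ}A_S^{-1}\|\lesssim1$ for the two comparable cubes, which again uses $W\in A_p$ (via the $p\le1$ argument or duality when $p>1$). Second, you should make explicit that the nested-cube consequence $\|A_QA_S^{-1}\|\lesssim[\ell(S)/\ell(Q)]^{d/p}$ (and its dual analogue) is exactly what \cite[Definition 2.23]{bhyy1} encodes, since using the definition with the roles of the cubes reversed would give a different (wrong) exponent. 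With these clarifications your argument is complete and, as far as one can tell, is essentially the argument behind \cite[Lemma 2.29]{bhyy1}.
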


\begin{remark}
In \cite[Lemma 2.47]{bhyy1}, Bu et al. proved that the indices appearing in the right-hand side
of inequality \eqref{e2.9} is sharp.
\end{remark}

\begin{lemma}\label{hd}
Let $p\in(0,1]$, $q\in(\max\{1,\frac{r_W}{r_W-1}p\},\infty]$,
$s\in[\lfloor n(\frac{1}{p}-1)\rfloor,\infty)\cap\mathbb Z_+$,
and $W\in A_p$. Then, for any continuous linear functional
$L$ on $H^p_W$,
\begin{align*}
\|L\|_{(H^p_W)^*}:=\sup\left\{\left|
L\left(\vec f\right)
\right|: \left\|\vec f\right\|_{H^p_W}\leq1\right\}
\sim\sup\left\{\left|L\left(\vec a\right)
\right|: \vec a\text{ is a }(p,q,s)_W\text{-atom}\right\},
\end{align*}
where the positive equivalence constants are independent of $L$.
\end{lemma}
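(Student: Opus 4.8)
The plan is to prove the two inequalities separately. The direction
$\sup\{|L(\vec a)|:\vec a\text{ is a }(p,q,s)_W\text{-atom}\}\lesssim\|L\|_{(H^p_W)^*}$
is the easy one: by Lemma \ref{W-F-atom}(i) (applied to the single atom
$\vec a=\vec a_k$ with all other $\lambda_k=0$), every $(p,q,s)_W$-atom
$\vec a$ belongs to $H^p_W$ with $\|\vec a\|_{H^p_W}\le C$ for a constant
$C$ independent of $\vec a$. Hence for any continuous linear functional
$L$ on $H^p_W$ we have $|L(\vec a)|\le\|L\|_{(H^p_W)^*}\|\vec a\|_{H^p_W}
\le C\|L\|_{(H^p_W)^*}$, and taking the supremum over all atoms gives the
claim.

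For the reverse inequality, fix $\vec f\in H^p_W$ with
$\|\vec f\|_{H^p_W}\le1$. First I would use Lemma \ref{W-F-atom}(ii) to
obtain an atomic decomposition $\vec f=\sum_{k\in\mathbb Z}\lambda_k\vec a_k$
in both $H^p_W$ and $(\mathcal S')^m$, where the $\vec a_k$ are
$(p,\infty,s)_W$-atoms (which are in particular $(p,q,s)_W$-atoms for the
given $q$, since the atom-size condition only weakens as $q$ decreases) and
$\|\{\lambda_k\}\|_{l^p}\le C\|\vec f\|_{H^p_W}\le C$. The key point is
that, because the series converges to $\vec f$ in the quasi-norm of
$H^p_W$ and $L$ is continuous on $H^p_W$, we may pass the functional
through the sum:
\begin{align*}
\left|L\left(\vec f\right)\right|
&=\left|L\left(\sum_{k\in\mathbb Z}\lambda_k\vec a_k\right)\right|
=\left|\sum_{k\in\mathbb Z}\lambda_k L\left(\vec a_k\right)\right|
\le\sum_{k\in\mathbb Z}|\lambda_k|\left|L\left(\vec a_k\right)\right|.
\end{align*}
Now let $\mathcal A:=\sup\{|L(\vec a)|:\vec a\text{ is a }(p,q,s)_W
\text{-atom}\}$, so that $|L(\vec a_k)|\le\mathcal A$ for every $k$.
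Since $p\in(0,1]$, we have $l^p\hookrightarrow l^1$, so
$\sum_k|\lambda_k|\le(\sum_k|\lambda_k|^p)^{1/p}
=\|\{\lambda_k\}\|_{l^p}\le C$. Combining these estimates yields
$|L(\vec f)|\le\mathcal A\sum_k|\lambda_k|\le C\mathcal A$. Taking the
supremum over all $\vec f$ with $\|\vec f\|_{H^p_W}\le1$ gives
$\|L\|_{(H^p_W)^*}\le C\mathcal A$, which is the desired bound.

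The main technical point to handle carefully — and the only place where
one must be attentive rather than routine — is justifying the interchange
$L(\sum_k\lambda_k\vec a_k)=\sum_k\lambda_k L(\vec a_k)$: this requires
that the partial sums $\sum_{|k|\le N}\lambda_k\vec a_k$ converge to
$\vec f$ in the topology of $H^p_W$ (which Lemma \ref{W-F-atom}(ii)
supplies) together with the continuity and linearity of $L$ on $H^p_W$,
after which the triangle inequality for the (possibly quasi-normed, but
here we only use the metric/convergence structure) functional values
applies directly. Everything else is the elementary embedding
$l^p\hookrightarrow l^1$ for $p\le1$ and the observation that
$(p,\infty,s)_W$-atoms are admissible test objects for $\mathcal A$.
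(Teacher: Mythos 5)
Your proposal is correct and follows essentially the same route as the paper: the easy direction via Lemma \ref{W-F-atom}(i), and the reverse via the $(p,\infty,s)_W$-atomic decomposition of Lemma \ref{W-F-atom}(ii), continuity of $L$, and the embedding $l^p\hookrightarrow l^1$ for $p\in(0,1]$. Your explicit remarks that $(p,\infty,s)_W$-atoms are admissible $(p,q,s)_W$-atoms and that the term-by-term application of $L$ is justified by convergence in $H^p_W$ are points the paper uses implicitly, so nothing is missing.
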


\begin{proof}
By Lemma \ref{W-F-atom}(i), we conclude that,
for any $(p,q,s)_W$-atom $\vec a$, $\|\vec a\|_{H^p_W}\lesssim1$ and
hence
\begin{align*}
\sup\left\{\left|
L\left(\vec f\right)
\right|:  \left\|\vec f\right\|_{H^p_W}\leq1\right\}
\gtrsim\sup\left\{\left|L\left(\vec a\right)
\right|:  \vec a\text{ is a }(p,q,s)_W\text{-atom}\right\}.
\end{align*}
Conversely, let $\vec f\in H^p_W$ satisfy $\|\vec f\|_{H^p_W}\leq 1$.
From Lemma \ref{W-F-atom}(ii), we infer that
there exist
a sequence $\{\lambda_k\}_{k\in\mathbb Z}\in l^p$
and a sequence of $(p,\infty,s)_W$-atoms $\{\vec a_k\}_{k\in\mathbb Z}$ such that
$\vec f=\sum_{k\in\mathbb Z}\lambda_k\vec a_k$ in both $H^p_W$ and $(\mathcal{S}')^m$
and $\|\{\lambda_k\}_{k\in\mathbb Z}\|_{l^p}\lesssim1$.
Combining this and the continuity of $L$, we find that
\begin{align*}
\left|L\left(\vec f\right)\right|&\leq\left(\sum_{k\in\mathbb Z}
|\lambda_k|\right)\left|L\left(\vec a_k\right)\right|
\leq\left\|\{\lambda_k\}_{k\in\mathbb Z}\right\|_{l^p}
\sup\left\{\left|L\left(\vec a\right)
\right|:  \vec a\text{ is a }(p,q,s)_W\text{-atom}\right\}\\
&\lesssim\sup\left\{\left|L\left(\vec a\right)
\right|:  \vec a\text{ is a }(p,q,s)_W\text{-atom}\right\},
\end{align*}
which completes the proof of the converse direction
and hence Lemma \ref{hd}.
\end{proof}

Now, we prove Theorem \ref{dual}. In what follows, we use $f|_E$
to denote a function $f$ restricted to the set $E$ in $\mathbb{R}^n$.

\begin{proof}[Proof of Theorem \ref{dual}]
We first prove (i).
Let $\vec g\in\mathcal L_{p,q,s,W}$ and $\vec a$ be a $(p,q',s)_W$-atom
supported in cube $Q$. By the H\"older inequality, Remark \ref{my}, and
Definition of $\mathcal L_{p,q,s,W}$, we find that
\begin{align}\label{yy}
\left|\int_{\mathbb R^n}\vec a(x)\vec g(x)\,dx\right|
&=\inf_{\vec P\in(\mathcal{P}_s)^m}
\left|\int_{\mathbb R^n}\vec a(x)\left[\vec g(x)-\vec P(x)\right]\,dx\right|\nonumber\\
&\leq\left\|A_Q\vec a\right\|_{L^{q'}}\inf_{\vec P\in(\mathcal{P}_s)^m}
\left\{\int_{Q}\left|A_Q^{-1}
\left[\vec g(x)-\vec P(x)\right]\right|^{q}\,dx\right\}^{\frac1{q}}\nonumber\\
&\leq|Q|^{\frac1{q'}-\frac1p}\inf_{\vec P\in(\mathcal{P}_s)^m}
\left\{\int_{Q}\left|A_Q^{-1}
\left[\vec g(x)-\vec P(x)\right]\right|^{q}\,dx\right\}^{\frac1{q}}\nonumber\\
&\leq\left\|\vec g\right\|_{\mathcal L_{p,q,s,W}}.
\end{align}
From Definition \ref{de-fin}, it follows that, for any
$\vec f\in H^{p,q',s}_{W,\rm{fin}}$, there exist $N\in\mathbb N$,
a sequence
$\{\lambda_k\}_{k=1}^N$ in $[0,\infty)$, and a sequence of $(p,q',s)_W$-atoms
$\{\vec a_k\}_{k=1}^N$ such that $\vec f=\sum_{k=1}^{N}\lambda_k\vec a_k$
and $\|\{\lambda_k\}_{k=1}^N\|_{l_p}\leq2\|\vec f\|_{H^{p,q',s}_{W,\rm{fin}}}$.
Combining this and \eqref{yy}, we find that
\begin{align}\label{spm}
L_{\vec g}\left(\vec f\right)&=\left|\int_{\mathbb R^n}\vec f(x)\vec g(x)\,dx\right|
\leq\sum_{k=1}^{N}|\lambda_k|\left|\int_{\mathbb R^n}
\vec a_k(x)\vec g(x)\,dx\right|\nonumber\\
&\leq\left\|\{\lambda_k\}_{k=1}^N\right\|_{l_p}
\left\|\vec g\right\|_{\mathcal L_{p,q,s,W}}
\leq2\left\|\vec f\right\|_{H^{p,q',s}_{W,\rm{fin}}}
\left\|\vec g\right\|_{\mathcal L_{p,q,s,W}}.
\end{align}
From the proof of \cite[Theorem 4.4]{bcyy}, we deduce that
$H^{p,q',s}_{W,\rm{fin}}\cap\mathcal C$ is dense in $H^p_W$, which,
together with \eqref{spm} and Lemma \ref{finatom}, further implies
that (i) holds.

Next, we prove (ii).
For any $r\in(1,\infty]$, any $s\in\mathbb Z_+$, any compact set $E\subset\mathbb R^n$,
and any matrix $A\in M_m(\mathbb C)$, let
$L^r(E,A)$ be the set of all measurable vector-valued functions $\vec f$
supported in $E$
such that $\|\vec f\|_{L^r(E,A)}:=\|A\vec f\|_{L^r}<\infty$
and, for any $s\in\mathbb Z_+$, let
\begin{align*}
L^r_{s}(E,A):=\left\{\vec f\in L^r(E,A):  \Pi^s_E\left(
\vec f\right)=\vec 0\right\}
\end{align*}
equipped with the norm
$\|\vec f\|_{L^r_s(E,A)}:=\|\vec f\|_{L^r(E,A)}$.
Then $L^r_{s}(E,A)$ is a closed subspace of $L^r(E)$.
Fix a cube $Q$. Using Remark \ref{my}, we conclude that,
for any $\vec f\in L^{q'}_{s}(Q,A_Q)$ with $\|\vec f\|_{L^{q'}_s(Q,A_Q)}\neq 0$,
\begin{align*}
\vec a:=|Q|^{\frac1{q'}-\frac1p}
\left\|\vec f\right\|_{L^{q'}(Q,A_Q)}^{-1}\vec f
\end{align*}
is a $(p,{q'},s)_W$-atom.
From this and Lemma \ref{hd}, we deduce that, for any $L\in(H^p_W)^*$
and any $\vec f\in L^{q'}_{s}(Q,A_Q)$,
\begin{align}\label{wt}
\left|L\left(\vec f\right)\right|\lesssim|Q|^{-\frac1{q'}+\frac1p}\|L\|_{(H^p_W)^*}
\left\|\vec f\right\|_{L^{q'}(Q,A_Q)},
\end{align}
which further implies that $L$ is a bounded linear functional on $L^{q'}_s(Q,A_Q)$.
This, together with the Hahn-Banach theorem, further implies that
$L$ can be extended to the space $L^{q'}(Q,A_Q)$
without increasing its norm.
Using the fact that,
for any $r\in [1,\infty]$, $[L^r(Q)]^*=L^{r'}(Q)$,
we easily find that, for any $r\in [1,\infty]$
and any invertible matrix $A$, $[L^r(Q,A)]^*=L^{r'}(Q,A^{-1})$.
From this and \eqref{wt}, we infer that there exists $\vec h_{Q,0}\in
L^{q}(Q,A_Q^{-1})$ such that, for any $\vec f\in L^{q'}_s(Q,A_Q)\subset L^{q'}(Q,A_Q)$,
\begin{align}\label{csk}
L\left(\vec f\right)=\int_Q\vec f(x)\vec h_{Q,0}(x)\,dx.
\end{align}
Now, we claim that $\vec h_{Q,0}$ is unique in the sense of $L^{q}(Q,A_Q^{-1})/
[\mathcal{P}_s(Q)]^m$,
that is, if there exists $\vec h_{Q,0}'\in L^{q}(Q,A_Q^{-1})$ satisfying, for any
$\vec f\in L^{q'}_s(Q,A_Q)$, \eqref{csk} holds,
then $\vec h_{Q,0}-\vec h_{Q,0}'\in[\mathcal{P}_s(Q)]^m$, where
$\mathcal{P}_s(Q):=\mathcal{P}_s|_Q$ (the restriction to $Q$ of $\mathcal{P}_s$).
By \eqref{csk}, \eqref{csk} with $\vec h_{Q,0}$ replaced by $\vec h_{Q,0}'$,
the definition of $L^{q'}_s(Q,A_Q)$, and \eqref{fy},
we find that, for any $\vec f\in L^{q'}_s(Q,A_Q)$,
\begin{align}\label{r3}
0&=\int_Q\left[\vec f(x)-\Pi_Q^s\left(\vec f\right)(x)\right]\left[
\vec h_{Q,0}(x)-\vec h_{Q,0}'(x)\right]\,dx\nonumber\\
&=\int_Q\vec f(x)\left[\vec h_{Q,0}(x)-\vec h_{Q,0}'(x)\right]\,dx
-\int_Q\Pi_Q^s\left(\vec f\right)(x)\Pi_Q^s
\left(\vec h_{Q,0}-\vec h_{Q,0}'\right)(x)\,dx\nonumber\\
&=\int_Q\vec f(x)\left[\vec h_{Q,0}(x)-\vec h_{Q,0}'(x)-\Pi_Q^s
\left(\vec h_{Q,0}-\vec h_{Q,0}'\right)(x)\right]\,dx.
\end{align}
From the definition of $L^{q'}_s(Q,A_Q)$, we deduce that, for any
$\vec g\in L^{q'}(Q,A_Q)$,
$\vec g-\Pi^s_Q(\vec g)\in L^{q'}_s(Q,A_Q)$, which, together with \eqref{fy}
and \eqref{r3}, further implies that
\begin{align*}
&\int_Q\vec g(x)\left[\vec h_{Q,0}(x)-\vec h_{Q,0}'(x)-\Pi_Q^s
\left(\vec h_{Q,0}-\vec h_{Q,0}'\right)(x)\right]\,dx\\
&\quad=\int_Q\left[\vec g(x)-\Pi^s_Q\left(\vec g\right)(x)\right]\left[\vec h_{Q,0}(x)-\vec h_{Q,0}'(x)-\Pi_Q^s
\left(\vec h_{Q,0}-\vec h_{Q,0}'\right)(x)\right]\,dx\\&\quad=0
\end{align*}
From the arbitrariness of $\vec g\in L^{q'}(Q,A_Q)$, we infer that
$\vec h_{Q,0}-\vec h_{Q,0}'=\Pi_Q^s(\vec h_{Q,0}-\vec h_{Q,0}')$ almost everywhere
and hence $\vec h_{Q,0}-\vec h_{Q,0}'\in[\mathcal{P}_s(Q)]^m$. This shows the above claim holds.
For any cube $Q$, let $\vec h_Q\in L^{q}(Q,A_Q^{-1})/
[\mathcal{P}_s(Q)]^m$ be such that $\vec h_{Q,0}\in\vec h_Q$.
By the uniqueness, we conclude that, for any cubes $Q_1,Q_2$,
any $\vec h_{Q_1,0}\in\vec h_{Q_1}$, and any $\vec h_{Q_2,0}\in\vec h_{Q_2}$,
$$\vec h_{Q_1,0}|_{Q_1\cap Q_2}-\vec h_{Q_2,0}|_{Q_1\cap Q_2}\in
\left[\mathcal{P}_s\left(Q_1\cap Q_2\right)\right]^m.$$
From this, it follows that there exists a measurable function
$\vec h$ such that,
for any cube $Q$, $\vec h\mathbf1_{Q}\in\vec h_Q$.
By the definition of
$H^{p,{q'},s}_{W,\rm{fin}}$, we find that, for any $\vec f\in H^{p,{q'},s}_{W,\rm{fin}}$,\
there exists cube $Q$ such that $\operatorname{supp}\vec f\subset Q$ and
$\vec f\in H^{p,{q'},s}_{W,\rm{fin}}\subset L^{q'}_s(Q,A_Q)$, which, together with
\eqref{csk}, further implies that
\begin{align*}
L\left(\vec f\right)=\int_{\mathbb R^n}\vec f(x)\vec h(x)\,dx.
\end{align*}
Using this and \eqref{wt}, we conclude that, for any cube $Q$,
\begin{align}\label{h<L}
\left\|\vec h\right\|_{[L^{q'}_s(Q,A_Q)]^*}\lesssim
|Q|^{-\frac1{q'}+\frac1p}\|L\|_{(H^p_W)^*},
\end{align}
where the implicit positive constant is independent of $Q$.
By an argument similar to the proof of \cite[(8.12)]{b03},
we find that
\begin{align*}
\left\|\vec h\right\|_{[L^{q'}_s(Q,A_Q)]^*}=\inf_{\vec P\in(\mathcal{P}_s)^m}
\left\|\vec h-\vec P\right\|_{L^{q}_s(Q,A_Q^{-1})}.
\end{align*}
Combining this, the definition of $\mathcal L_{p,q,s,W}$, and
\eqref{h<L} yields
\begin{align*}
\left\|\vec h\right\|_{\mathcal L_{p,q,s,W}}:=&\,
\sup_{\mathrm{cube}\,Q}\inf_{\vec P\in(\mathcal{P}_s)^m}|Q|^{\frac1{q'}-\frac1p}
\left\|\vec h-\vec P\right\|_{L^{q}(Q,A_Q^{-1})}\\
=&\,
\sup_{\mathrm{cube}\,Q}|Q|^{\frac1{q'}-\frac1p}
\left\|\vec h\right\|_{[L^{q'}(Q,A_Q)]^*}
\lesssim\|L\|_{(H^p_W)^*}.
\end{align*}
This finishes the proof of (ii) and hence Theorem \ref{dual}.
\end{proof}

As an immediate consequence of Theorem \ref{dual},
we have the following equivalent characterization
of the matrix-weighted Campanato space; we omit the details.

\begin{corollary}\label{2247}
Let $p\in(0,1]$, $q\in[1,\infty)$, $s\in
[\lfloor n(\frac{1}{p}-1)\rfloor,\infty)\cap\mathbb Z_+$, and $W\in A_p$.
Then $\mathcal L_{p,1,s_0,W}=\mathcal L_{p,q,s,W}$ with equivalent quasi-norms, where
$s_0:=\lfloor n(\frac{1}{p}-1)\rfloor$.
\end{corollary}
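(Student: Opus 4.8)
The plan is to deduce Corollary \ref{2247} directly from the duality Theorem \ref{dual} by exploiting that the dual space of $H^p_W$ does not depend on the admissible parameters $q$ and $s$. First I would observe that, by hypothesis, $s_0:=\lfloor n(\frac1p-1)\rfloor$ lies in $[\lfloor n(\frac1p-1)\rfloor,\infty)\cap\mathbb Z_+$, and $1\in[1,\infty)$, so the pair $(q,s)=(1,s_0)$ satisfies the assumptions of Theorem \ref{dual}. Likewise the given pair $(q,s)$ with $q\in[1,\infty)$ and $s\in[\lfloor n(\frac1p-1)\rfloor,\infty)\cap\mathbb Z_+$ satisfies those assumptions. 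Hence Theorem \ref{dual} identifies both $\mathcal L_{p,1,s_0,W}$ and $\mathcal L_{p,q,s,W}$ with $(H^p_W)^*$: every $\vec g$ in either space induces, via \eqref{fl}, a bounded linear functional $L_{\vec g}$ on $H^p_W$ whose norm is comparable to $\|\vec g\|_{\mathcal L_{p,\cdot,\cdot,W}}$, and conversely every $L\in(H^p_W)^*$ is represented by a unique element of each space.

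The key step is then to chain these two identifications. Given $\vec g\in\mathcal L_{p,1,s_0,W}$, Theorem \ref{dual}(i) yields $L_{\vec g}\in(H^p_W)^*$ with $\|L_{\vec g}\|_{(H^p_W)^*}\lesssim\|\vec g\|_{\mathcal L_{p,1,s_0,W}}$; applying Theorem \ref{dual}(ii) to $L_{\vec g}$ with the parameters $(q,s)$ produces a unique $\vec h\in\mathcal L_{p,q,s,W}$ with $L_{\vec h}=L_{\vec g}$ on $H^p_W$ and $\|\vec h\|_{\mathcal L_{p,q,s,W}}\lesssim\|L_{\vec g}\|_{(H^p_W)^*}$. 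Since $L_{\vec g}$ and $L_{\vec h}$ agree as functionals and both are given by integration against $\vec g$, respectively $\vec h$, against all $\vec f\in H^{p,q',s}_{W,\rm{fin}}\cap\mathcal C$ and all $\vec f\in H^{p,1,s_0}_{W,\rm{fin}}\cap\mathcal C$ — and these finite atomic spaces (with the larger integrability exponent, i.e.\ $q'=\infty$ for the $L^1$ side, $q'\le\infty$ for the other) contain a common dense-enough class of test functions — one concludes $\vec g=\vec h$ modulo $(\mathcal P_s)^m$, so in particular $\vec g\in\mathcal L_{p,q,s,W}$ with $\|\vec g\|_{\mathcal L_{p,q,s,W}}\lesssim\|\vec g\|_{\mathcal L_{p,1,s_0,W}}$. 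The reverse inclusion and norm estimate are obtained symmetrically, starting from $\vec g\in\mathcal L_{p,q,s,W}$ and running the same argument with the roles of $(1,s_0)$ and $(q,s)$ exchanged; note that here one needs $H^{p,\infty,s}_{W,\rm{fin}}\cap\mathcal C$ (which embeds into $H^{p,q',s}_{W,\rm{fin}}$ for every $q'$) to test against both representations.

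I expect the main subtlety to be the bookkeeping in identifying the two representing functions: strictly speaking Theorem \ref{dual}(ii) gives uniqueness of $\vec g$ only within $\mathcal L_{p,q,s,W}$ (i.e.\ modulo $(\mathcal P_s)^m$), so one must argue that if $\int_{\mathbb R^n}\vec f\vec g=\int_{\mathbb R^n}\vec f\vec h$ for all $\vec f$ in a suitable dense subspace of finite atoms then $\vec g-\vec h\in(\mathcal P_{s}|_{\text{loc}})^m$ locally, hence the two Campanato norms of $\vec g$ coincide. Concretely, restricting to finite atoms supported in a fixed cube $Q$ with vanishing moments up to order $s_0$ (these lie in both finite atomic spaces since $s\ge s_0$) and invoking the local representation argument already used in the proof of Theorem \ref{dual}(ii) — namely that annihilating all such atoms forces $\vec g-\vec h$ to equal its own $\Pi^s_Q$-projection on $Q$ — pins down $\vec g$ and $\vec h$ to the same class. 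Once this identification is in place, the equality $\mathcal L_{p,1,s_0,W}=\mathcal L_{p,q,s,W}$ with equivalent quasi-norms is immediate, and since the paper explicitly says the details are omitted, a brief indication along these lines suffices.
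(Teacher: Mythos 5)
Your proposal is correct and follows exactly the route the paper intends: the corollary is stated as an immediate consequence of Theorem \ref{dual} (the paper omits the details), namely that for every admissible pair $(q,s)$ the space $\mathcal L_{p,q,s,W}$ is identified with $(H^p_W)^*$, so any two such spaces coincide with equivalent quasi-norms. Your extra bookkeeping about the representing functions agreeing only modulo $(\mathcal P_s)^m$ is the right way to read the asserted equality, since the quasi-norms vanish precisely on polynomials.
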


Next, we present another equivalent characterization
of the matrix-weighted Campanato space.

\begin{theorem}\label{JN}
Let $p\in(0,1]$, $q\in[1,\infty)$, $s\in
[\lfloor n(\frac{1}{p}-1)\rfloor,\infty)\cap\mathbb Z_+$,
and $r\in[1,\infty)$.
Let $W\in A_p$ and $\{A_Q\}_{\mathrm{cube}\,Q}$ be a family of
reducing operators of order $p$ for $W$.
Then $\vec f\in\mathcal L_{p,q,s,W}$ if and only if
\begin{align}\label{JNsim}
\left\|\vec f\right\|_{\mathcal L_{p,q,s,W,\mathbb A}}:=\sup_{\mathrm{cube}\,Q}\inf_{\vec P\in(\mathcal{P}_s)^m}|Q|^{\frac1{q'}-\frac1p}
\int_{Q}\left|W^{-\frac1{r'}}(x)A_Q^{\frac p{r'}-1}\left[\vec g(x)-\vec P(x)\right]
\right|^r\,dx<\infty
\end{align}
with equivalent quasi-norms.
\end{theorem}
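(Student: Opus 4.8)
The plan is to prove the two quasi‑norm inequalities separately, routing the harder one through the duality of Theorem~\ref{dual}. Fix the family $\{A_Q\}_{\mathrm{cube}\,Q}$ of reducing operators of order $p$ for $W$ from the statement. The only nonstandard ingredient is the algebra of the mixed matrix weight $W^{-1/r'}(x)A_Q^{p/r'-1}$, so I first record how to control it. Since $p\in(0,1]$, Definition~\ref{def ap} (in the case $p\in(0,1]$) together with \eqref{equ_reduce} gives the pointwise bound $\|W^{-1/p}(x)A_Q\|\lesssim[W]_{A_p}^{1/p}$ for almost every $x\in Q$ (indeed $|A_QW^{-1/p}(y)\vec w|^p\sim\fint_Q|W^{1/p}(x)W^{-1/p}(y)\vec w|^p\,dx\le[W]_{A_p}|\vec w|^p$ for almost every $y\in Q$). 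Setting $\theta:=p/r'\in[0,1]$ and using the operator inequality $\|X^\theta Y^\theta\|\le\|XY\|^\theta$ (valid for positive definite matrices $X,Y$ and $\theta\in[0,1]$), I then get, for almost every $x\in Q$, both $\|W^{-1/r'}(x)A_Q^{p/r'}\|=\|[W^{-1/p}(x)]^\theta[A_Q]^\theta\|\lesssim\|W^{-1/p}(x)A_Q\|^\theta\lesssim[W]_{A_p}^{1/r'}$ and $\|W^{1/r'}(x)A_Q^{-p/r'}\|\le\|W^{1/p}(x)A_Q^{-1}\|^\theta$, while $\fint_Q\|W^{1/p}(x)A_Q^{-1}\|^p\,dx\lesssim1$ by \eqref{equ_reduce}. (For $r=1$ one has $\theta=0$, so \eqref{JNsim} literally reduces to Definition~\ref{mwcs}.)

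For the inclusion $\mathcal L_{p,q,s,W}\subseteq\mathcal L_{p,q,s,W,\mathbb A}$, fix a cube $Q$ and $\vec P\in(\mathcal P_s)^m$. Since powers of $A_Q$ commute, I factor $W^{-1/r'}(x)A_Q^{p/r'-1}=[W^{-1/r'}(x)A_Q^{p/r'}]A_Q^{-1}$ and apply the first preliminary estimate pointwise, obtaining $|W^{-1/r'}(x)A_Q^{p/r'-1}[\vec f(x)-\vec P(x)]|\lesssim[W]_{A_p}^{1/r'}|A_Q^{-1}[\vec f(x)-\vec P(x)]|$ for almost every $x\in Q$. Integrating over $Q$, taking the infimum over $\vec P$, and then the supremum over $Q$ bounds $\|\vec f\|_{\mathcal L_{p,q,s,W,\mathbb A}}$ by a constant multiple of $\|\vec f\|_{\mathcal L_{p,r,s,W}}$, which by Corollary~\ref{2247} is comparable to $\|\vec f\|_{\mathcal L_{p,q,s,W}}$.

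For the reverse inclusion, assume $\|\vec f\|_{\mathcal L_{p,q,s,W,\mathbb A}}<\infty$. First, $\vec f\in(L^1_{\mathrm{loc}})^m$: choosing $\vec P$ nearly optimal on a cube $Q$, the identity $\vec f-\vec P=A_Q^{1-p/r'}W^{1/r'}(x)[W^{-1/r'}(x)A_Q^{p/r'-1}(\vec f-\vec P)]$, together with $\|W^{1/r'}(x)A_Q^{-p/r'}\|\le\|W^{1/p}(x)A_Q^{-1}\|^\theta$, the bound $\fint_Q\|W^{1/p}(x)A_Q^{-1}\|^p\,dx\lesssim1$, and Hölder's inequality with exponents $r'$ and $r$, gives $\vec f\in L^1(Q)$; hence $L_{\vec f}$ of \eqref{fl} is defined on finite combinations of atoms. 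Now let $\vec a$ be a $(p,\infty,s)_W$-atom supported in a cube $Q$ and let $\vec P\in(\mathcal P_s)^m$ be arbitrary. Using the moment conditions of $\vec a$ to insert $-\vec P$, rewriting $\vec a\cdot(\vec f-\vec P)=[W^{1/r'}(x)A_Q^{1-p/r'}\vec a]\cdot[W^{-1/r'}(x)A_Q^{p/r'-1}(\vec f-\vec P)]$, applying Hölder with exponents $r'$ and $r$, and estimating the first factor via $|W^{1/r'}(x)A_Q^{1-p/r'}\vec a(x)|\le\|W^{1/p}(x)A_Q^{-1}\|^\theta|A_Q\vec a(x)|$, the atom bound $|A_Q\vec a(x)|\lesssim|Q|^{-1/p}$ (Remark~\ref{my} with exponent $\infty$), and $\int_Q\|W^{1/p}(x)A_Q^{-1}\|^p\,dx\lesssim|Q|$, I obtain $|\int_Q\vec a(x)[\vec f(x)-\vec P(x)]\,dx|\lesssim|Q|^{1/r'-1/p}\{\int_Q|W^{-1/r'}(x)A_Q^{p/r'-1}(\vec f-\vec P)(x)|^r\,dx\}^{1/r}$. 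Taking the infimum over $\vec P$ and inserting the normalization of \eqref{JNsim}, this is $\lesssim\|\vec f\|_{\mathcal L_{p,q,s,W,\mathbb A}}$, uniformly over all such atoms. Since $p\le1$ gives $\ell^p\subseteq\ell^1$, the finite-atomic decomposition (Lemma~\ref{finatom}) and the density of $H^{p,\infty,s}_{W,\mathrm{fin}}\cap\mathcal C$ in $H^p_W$ (as in the proof of Theorem~\ref{dual}(i)) let $L_{\vec f}$ extend to $(H^p_W)^*$ with $\|L_{\vec f}\|_{(H^p_W)^*}\lesssim\|\vec f\|_{\mathcal L_{p,q,s,W,\mathbb A}}$, by Lemma~\ref{hd} with $q=\infty$. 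Finally, Theorem~\ref{dual}(ii) yields a unique $\vec g\in\mathcal L_{p,q,s,W}$ with $L_{\vec f}=L_{\vec g}$ and $\|\vec g\|_{\mathcal L_{p,q,s,W}}\lesssim\|L_{\vec f}\|_{(H^p_W)^*}$; since then $\int_{\mathbb R^n}\vec a(x)[\vec f(x)-\vec g(x)]\,dx=0$ for every $(p,\infty,s)_W$-atom $\vec a$, the uniqueness/moment argument in the proof of Theorem~\ref{dual}(ii) forces $\vec f-\vec g\in(\mathcal P_s)^m$, so $\vec f\in\mathcal L_{p,q,s,W}$ with $\|\vec f\|_{\mathcal L_{p,q,s,W}}\lesssim\|\vec f\|_{\mathcal L_{p,q,s,W,\mathbb A}}$.

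I expect the main obstacle to be exactly this non-commutative manipulation of $W^{-1/r'}(x)A_Q^{p/r'-1}$: one must peel off a single reducing-operator power $A_Q^{\pm1}$ and control the residual factor $W^{\mp1/r'}(x)A_Q^{\pm p/r'}$. This is where the hypothesis $p\in(0,1]$ enters essentially — it forces $p/r'\in[0,1]$, so the operator inequality $\|X^\theta Y^\theta\|\le\|XY\|^\theta$ applies and can be coupled with the pointwise bound $\|W^{-1/p}(x)A_Q\|\lesssim[W]_{A_p}^{1/p}$ (which holds precisely because $p\le1$); everything else — Hölder's inequality, the reducing-operator equivalence \eqref{equ_reduce}, and the duality of Theorem~\ref{dual} — is routine.
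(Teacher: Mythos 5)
Your proposal is correct in outline, and its first half matches the paper; its second half takes a genuinely different and much heavier route. The inclusion $\mathcal L_{p,q,s,W}\subseteq\mathcal L_{p,q,s,W,\mathbb A}$ is handled exactly as in the paper: the a.e.\ bound $\|A_QW^{-1/p}(x)\|\lesssim[W]_{A_p}^{1/p}$ on $Q$ (which uses $p\le 1$), the power inequality $\|X^\theta Y^\theta\|\le\|XY\|^\theta$ with $\theta=p/r'$, and Corollary~\ref{2247}; the paper quotes \cite{byyz} and \cite{fr21} for precisely these two facts. For the converse inclusion, however, the paper does not go through duality at all: it applies H\"older with exponents $r'$ and $r$ to $\int_Q|A_Q^{-1}[\vec f(x)-\vec P(x)]|\,dx$, bounds $\|A_Q^{-p/r'}W^{1/r'}(x)\|^{r'}\le\|W^{1/p}(x)A_Q^{-1}\|^{p}$ by the same power trick, uses $\fint_Q\|W^{1/p}(x)A_Q^{-1}\|^{p}\,dx\lesssim1$, and then invokes Corollary~\ref{2247} with $q=1$. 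Your atom-pairing estimate is essentially this H\"older step in disguise, but you then need the full representation Theorem~\ref{dual}(ii), the extension mechanism of Theorem~\ref{dual}(i) (Lemma~\ref{hd} by itself does not build the extension), and a uniqueness argument; note that both proofs already rely on duality implicitly through Corollary~\ref{2247}, so the direct route costs nothing in hypotheses and is two lines long.

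One step of your duality detour needs tightening. The identity $L(\vec a)=\int_{\mathbb R^n}\vec a(x)\vec f(x)\,dx$ is only available for $\vec a\in H^{p,\infty,s}_{W,\mathrm{fin}}\cap\mathcal C$, since the bounded extension $L$ of $L_{\vec f}$ is produced by density from the continuous finite atomic space; for a general (discontinuous) $(p,\infty,s)_W$-atom you know the abstract value $L(\vec a)$ but not that it equals $\int\vec a\,\vec f$ (that is essentially what one is trying to prove). Consequently you only get $\int\vec u\,(\vec f-\vec g)=0$ for continuous, compactly supported $\vec u$ with vanishing moments up to order $s$, and the uniqueness argument in the proof of Theorem~\ref{dual}(ii), which tests against all of $L^{q'}_s(Q,A_Q)$, does not apply verbatim. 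The desired conclusion $\vec f-\vec g\in(\mathcal P_s)^m$ is still true, but it requires an extra (standard) step: e.g., for $\psi\in C_c^\infty$ supported in a cube $Q$, subtract a fixed smooth bump times a suitable polynomial to annihilate the moments of $\psi$, which shows that $\vec f-\vec g$ agrees on $Q$ with a polynomial of degree at most $s$, and then patch over cubes. With this repair, or by simply replacing this direction with the paper's direct H\"older estimate, your argument is complete.
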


\begin{proof}
Using the H\"older inequality, \cite[Lemma 2.8]{byymz},
\cite[Lemma 2.9]{byyz},
and Lemma \ref{86}, we conclude that, for any cube $Q$ and $\vec P
\in(\mathcal{P}_s)^m$,
\begin{align*}
&\int_{Q}\left|A_Q^{-1}\left[\vec g(x)-\vec P(x)\right]
\right|\,dx\\
&\quad\leq\left[\int_{Q}\left\|A_Q^{-\frac p{r'}}
W^{\frac1{r'}}(x)\right\|^{r'}\,dx\right]^{\frac1{r'}}
\left\{\int_{Q}\left|W^{-\frac1{r'}}(x)A_Q^{\frac p{r'}-1}\left[\vec g(x)-\vec P(x)\right]
\right|^r\,dx\right\}^{\frac1r}\\
&\quad\leq\left[\int_{Q}\left\|W^{\frac1p}(x)A_Q^{-1}\right\|^p\,dx\right]^{\frac1{r'}}
\left\{\int_{Q}\left|W^{-\frac1{r'}}(x)A_Q^{\frac p{r'}-1}\left[\vec g(x)-\vec P(x)\right]
\right|^r\,dx\right\}^{\frac1r}\\
&\quad\lesssim
\left\{\int_{Q}\left|W^{-\frac1{r'}}(x)A_Q^{\frac p{r'}-1}\left[\vec g(x)-\vec P(x)\right]
\right|^r\,dx\right\}^{\frac1r}.
\end{align*}
From \cite[Lemma 2.9]{byyz} and \cite[Lemma 3.3]{fr21}, we deduce that
\begin{align*}
&\int_{Q}\left|W^{-\frac1{r'}}(x)A_Q^{\frac p{r'}-1}\left[\vec g(x)-\vec P(x)\right]
\right|^r\,dx\\
&\quad\leq\int_{Q}\left\|A_Q^{\frac p{r'}}W^{-\frac1{r'}}(x)\right\|^r
\left|A_Q^{-1}\left[\vec g(x)-\vec P(x)\right]
\right|^r\,dx\\
&\quad\leq\int_{Q}\left\|A_QW^{-\frac1p}(x)\right\|^{p(r-1)}
\left|A_Q^{-1}\left[\vec g(x)-\vec P(x)\right]
\right|^r\,dx\\
&\quad\lesssim\int_{Q}\left|A_Q^{-1}\left[\vec g(x)-\vec P(x)\right]
\right|^r\,dx.
\end{align*}
These, together with Corollary \ref{2247}, further finish
the proof of Theorem \ref{JN}.
\end{proof}

\begin{remark}
Let $m=1$, $p=q=1$, $s=0$, $r\in[1,\infty)$, and $W\in A_1$.
Then \eqref{JNsim} reduces to
\begin{align}\label{bxhl}
\sup_{\mathrm{cube}\,Q}\inf_{\vec P\in\mathcal{P}_0}
\frac1{W(Q)}\int_{Q}\left|W^{-\frac1{r'}}(x)
\left[g(x)-P(x)\right]\right|^r\,dx<\infty.
\end{align}
In this case, Theorem \ref{JN} reduces to
$g\in\mathcal L_{1,1,0,W}$
if and only if \eqref{bxhl} holds, which, together with
Lemma \ref{lPi}, further implies that
Theorem \ref{JN} coincides with \cite[Theorem 4]{MW75} in the case of $p=1$.
\end{remark}

\section{Boundedness of Calder\'on--Zygmund Operators
on $\mathcal L_{p,q,s,W}$\label{cz-l}}

In this section, we establish the boundedness
of Calder\'on--Zygmund operators on $\mathcal L_{p,q,s,W}$.
We first present the concept of the $(s,\delta)$-standard kernel
(see, for instance, \cite[Chapter III]{S93}).
In what follows, for any $\gamma=(\gamma_1,\ldots,\gamma_n)\in
{\mathbb Z}_+^n$,
any $\gamma$-order differentiable function $F(\cdot,\cdot)$
on ${\mathbb{R}^n}\times {\mathbb{R}^n}$, and any $(x,y)\in {\mathbb{R}^n}\times {\mathbb{R}^n}$, let
$$\partial_{(1)}^{\gamma}F(x,y):=\frac{\partial^{|\gamma|}}
{\partial x_1^{\gamma_1}\cdots\partial x_n^{\gamma_n}}F(x,y)$$
and
$$\partial_{(2)}^{\gamma}F(x,y):=\frac{\partial^{|\gamma|}}
{\partial y_1^{\gamma_1}\cdots\partial y_n^{\gamma_n}}F(x,y).$$

\begin{definition}\label{skernel}
Let $s\in{\mathbb Z}_+$ and $\delta\in(0,1]$. A measurable function $K$
on ${\mathbb{R}^n}\times {\mathbb{R}^n}\setminus\{(x,x):  x\in{\mathbb{R}^n}\}$
is called an \emph{$(s,\delta)$-standard kernel} if
there exists a positive constant $C$
such that, for any $\gamma\in{\mathbb Z}_+^n$ with $|\gamma|\le s$,
the followings hold:
\begin{itemize}
\item[\rm (i)]
for any $x,y\in{\mathbb{R}^n}$ with $x\neq y$,
\begin{align}\label{size-s'}
\left|\partial_{(2)}^{\gamma}K(x,y)\right|\le
\frac{C}{|x-y|^{n+|\gamma|}};
\end{align}

\item[\rm (ii)]
\eqref{size-s'} still holds for the first variable of $K$;

\item[\rm (iii)]
for any $x,y,z\in{\mathbb{R}^n}$
with $x\neq y$ and $|x-y|\ge2|y-z|$,
\begin{align}\label{regular2-s}
\left|\partial_{(2)}^{\gamma}K(x,y)-\partial_{(2)}^{\gamma}K(x,z)\right|
\le C\frac{|y-z|^\delta}{|x-y|^{n+|\gamma|+\delta}};
\end{align}

\item[\rm (iv)]
\eqref{regular2-s} still holds for the first variable of $K$.
\end{itemize}
\end{definition}

\begin{definition}\label{scz}
Let $s\in\mathbb Z_+$, $\delta\in(0,1]$, and $K$ be an $(s,\delta)$-standard kernel.
A linear operator $T$ is called an $(s,\delta)$-\emph{Calder\'on--Zygmund operator},
associated with the kernel $K$
if $T$ is bounded on $L^2$ and,
for any $f\in L^2$ with compact support
and for almost every $x\not\in\mathrm{\,supp\,}(f)$,
\begin{align}\label{eqscz}
T(f)(x)=\int_{\mathbb{R}^n}K(x,y)f(y)\,dy.
\end{align}
\end{definition}

The following proposition shows that
any Calder\'on--Zygmund operator
is bounded on $L^q$ for any
$q\in(1,\infty)$, which is exactly \cite[Theorem 5.10]{Duo01}.

\begin{proposition}\label{sczbouned}
Let $s\in\mathbb Z_+$, $\delta\in(0,1]$,
$K$ be an $(s,\delta)$-standard kernel, and
$T$ an $(s,\delta)$-Calder\'on--Zygmund operator associated
with the kernel $K$. If $q\in(1,\infty)$,
then there exists $C\in(0,\infty)$
such that, for any $f\in L^q$,
$$\|T(f)\|_{L^q}\le C\|f\|_{L^q}.$$
\end{proposition}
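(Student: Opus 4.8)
The plan is to follow the classical Calder\'on--Zygmund program: first establish the weak type $(1,1)$ bound for $T$, then interpolate it with the $L^2$ boundedness, and finally reach the range $q\in[2,\infty)$ by duality.

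First I would recall that, by Definition \ref{scz}, $T$ is bounded on $L^2$, and that conditions (iii) and (iv) of Definition \ref{skernel} with $\gamma=\mathbf 0$ yield the H\"ormander-type regularity
$$
\left|K(x,y)-K(x,z)\right|\le C\frac{|y-z|^\delta}{|x-y|^{n+\delta}}\quad\text{whenever }|x-y|\ge 2|y-z|,
$$
together with the analogous estimate in the first variable of $K$. The main step is then to show that $T$ is of weak type $(1,1)$. Given $f\in L^1\cap L^2$ and $\lambda\in(0,\infty)$, apply the Calder\'on--Zygmund decomposition of $f$ at height $\lambda$ to write $f=g+b$ with $b=\sum_j b_j$, where $\|g\|_{L^\infty}\lesssim\lambda$, $\|g\|_{L^1}\lesssim\|f\|_{L^1}$ (so $\|g\|_{L^2}^2\lesssim\lambda\|f\|_{L^1}$), each $b_j$ is supported in a cube $Q_j$, has vanishing integral, satisfies $\|b_j\|_{L^1}\lesssim\lambda|Q_j|$, the cubes $\{Q_j\}_j$ have pairwise disjoint interiors, and $\sum_j|Q_j|\lesssim\lambda^{-1}\|f\|_{L^1}$. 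For the good part, the $L^2$ boundedness of $T$ and the Chebyshev inequality give $|\{x:|T(g)(x)|>\lambda/2\}|\lesssim\lambda^{-2}\|T(g)\|_{L^2}^2\lesssim\lambda^{-2}\|g\|_{L^2}^2\lesssim\lambda^{-1}\|f\|_{L^1}$. For the bad part, set $\Omega^*:=\bigcup_j 2Q_j$, so $|\Omega^*|\lesssim\lambda^{-1}\|f\|_{L^1}$, and on $(\Omega^*)^\complement$, using \eqref{eqscz}, the vanishing integral of $b_j$, and the H\"ormander condition above,
$$
\int_{(\Omega^*)^\complement}\left|T(b)(x)\right|\,dx
\le\sum_j\int_{(2Q_j)^\complement}\left|\int_{Q_j}\left[K(x,y)-K\left(x,c_{Q_j}\right)\right]b_j(y)\,dy\right|\,dx
\lesssim\sum_j\|b_j\|_{L^1}\lesssim\|f\|_{L^1},
$$
so that $|\{x\in(\Omega^*)^\complement:|T(b)(x)|>\lambda/2\}|\lesssim\lambda^{-1}\|f\|_{L^1}$; combining the estimates on $g$, on $b$, and on $\Omega^*$ gives the weak type $(1,1)$ bound.

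Next I would invoke the Marcinkiewicz interpolation theorem between the weak type $(1,1)$ estimate and the strong type $(2,2)$ estimate to conclude that $T$ is bounded on $L^q$ for every $q\in(1,2]$. To cover $q\in[2,\infty)$, observe that the formal transpose $T^\ast$ of $T$ is again an $(s,\delta)$-Calder\'on--Zygmund operator: it is bounded on $L^2$, and its kernel $K^\ast(x,y):=\overline{K(y,x)}$ satisfies all of (i)--(iv) of Definition \ref{skernel} because those conditions are symmetric in the two variables. Hence, by the case already proved, $T^\ast$ is bounded on $L^{q'}$ for every $q'\in(1,2]$, i.e. for every $q\in[2,\infty)$, and a standard duality argument then gives the boundedness of $T$ on $L^q$ for such $q$. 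A final density argument extends the bound from $L^q\cap L^2$ to all of $L^q$.

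I expect the main obstacle to be the weak type $(1,1)$ estimate, and within it the careful bookkeeping of the Calder\'on--Zygmund decomposition together with the application of the H\"ormander cancellation estimate to each $b_j$ off $2Q_j$; the interpolation and duality steps are routine once that is in hand. Alternatively, since this is a classical fact, one may simply invoke \cite[Theorem 5.10]{Duo01}.
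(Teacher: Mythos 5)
Your proposal is correct and is essentially the same as the paper's treatment: the paper does not prove this proposition but simply cites \cite[Theorem 5.10]{Duo01}, whose proof is exactly the classical Calder\'on--Zygmund decomposition, weak type $(1,1)$, Marcinkiewicz interpolation, and duality argument you outline (and you note the citation option yourself).
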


\begin{remark}\label{lq}
Let $s\in\mathbb Z_+$, $\delta\in(0,1]$,
$K$ be an $(s,\delta)$-standard kernel, and
$T$ an $(s,\delta)$-Calder\'on--Zygmund operator associated
with the kernel $K$.
\begin{enumerate}
\item[{\rm(i)}] By \cite[TH\'EOR\`{E}ME 21]{cm78},
we find that, for any $q\in(1,\infty)$, any
$f\in L^q$, and almost every
$x\not\in\mathrm{\,supp\,}(f)$, \eqref{eqscz} also holds.
\item[{\rm(ii)}] Let $T^*$ denote the adjoint operator of
$T$ on $L^2$,
that is, for any $f,g\in L^2$,
\begin{align}\label{71}
\int_{\mathbb R^n}T^*f(x)g(x)\,dx=\int_{\mathbb R^n}f(x)Tg(x)\,dx.
\end{align}
Then, from Definition \ref{scz}, we easily
infer that $T^*$ is an $(s,\delta)$-Calder\'on--Zygmund operator associated
with the kernel $\widetilde{K}$, where, for any
$x,y\in\mathbb{R}^n$ with $x\ne y$,
$\widetilde{K}(x,y):={K(y,x)}$.
Moreover, by this and Proposition \ref{sczbouned},
we find that \eqref{71} also holds for any
$p\in(1,\infty)$, $f\in L^p$, and $g\in L^{p'}$.
\end{enumerate}
\end{remark}

Now, we recall
the definition of
modified Calder\'on--Zygmund operators as follows;
see, for instance, \cite[p.\,119]{Duo01} or \cite[Section 4.4]{kk13}.

\begin{definition}\label{mcz}
Let $s\in{\mathbb Z}_+$ and $\delta\in(0,1]$.
Assume that $K$ is an $(s,\delta)$-standard kernel and
$T$ an $(s,\delta)$-Calder\'on--Zygmund operator associated with the kernel $K$.
Let $\{t_i\}_{i\in\mathbb{N}}$ be a sequence of $(0,\infty)$ such that,
for any $i\in\mathbb{N}$, $t_{i}\le t_{i+1}$,
$\lim_{i\to\infty}t_i=\infty$,
and $\{B_i\}_{i\in\mathbb{N}}
:=\{B({\bf0},t_i)\}_{i\in\mathbb{N}}$.
We also let $B_0:=\emptyset$.
The \emph{$(s,\delta)$-modified Calder\'on--Zygmund operator}
$T^{(s)}_{\{t_i\}_{i\in\mathbb{N}}}$,
associated with $K$ and
$\{t_i\}_{i\in\mathbb{N}}$, is defined by setting,
for any suitable function $f$ on $\mathbb{R}^n$
and for almost every $x\in\mathbb{R}^n$,
\begin{align*}
T^{(s)}_{\{t_i\}_{i\in\mathbb{N}}}(f)(x)&:=\sum_{i\in\mathbb{N}}
{\bf1}_{B_i\setminus B_{i-1}}(x)
\Bigg\{T\left({\bf1}_{2B_{i}}f\right)(x)\\
&\qquad\left.
+\int_{\mathbb{R}^n\setminus 2B_{i}}\left[K(x,y)-
\sum_{\{\gamma\in\mathbb Z^n_+:  |\gamma|\leq s\}}
\frac{\partial^\gamma_{(1)}K({\bf0},y)}{\gamma!}x^\gamma
\right]f(y)\,dy\right.\nonumber\\
&\qquad\left.-\int_{2B_i\setminus 2B_1}
\sum_{\{\gamma\in\mathbb Z^n_+:  |\gamma|\leq s\}}
\frac{\partial^\gamma_{(1)}K({\bf0},y)}{\gamma!}x^\gamma
f(y)\,dy\right\}.
\end{align*}
\end{definition}

The following proposition shows that
the $(s,\delta)$-modified Calder\'on--Zygmund operator
is well-defined for appropriate locally
integrable functions, whose proof is
a slight modification of the proof of \cite[Proposition 3.6]{jly};
we omit the details.

\begin{proposition}\label{mzcwell}
Let $s\in\mathbb Z_+$, $\delta\in(0,1]$, $K$ be an $(s,\delta)$-standard kernel as in
Definition \ref{skernel}, and
$T$ an $(s,\delta)$-Calder\'on--Zygmund operator associated with the kernel $K$.
Assume that $\{t_i\}_{i\in\mathbb{N}}$
and $T^{(s)}_{\{t_i\}_{i\in\mathbb{N}}}$ are as in Definition \ref{mcz}.
If $q\in(1,\infty)$ and $f\in L^q_{\rm loc}$
satisfies
\begin{align}\label{locq}
\int_{|y|\ge2t_1}\frac{|f(y)|}{|y|^{n+s+\delta}}\,dy<\infty,
\end{align}
then
\begin{enumerate}
\item[$\mathrm{(i)}$]
$T^{(s)}_{\{t_i\}_{i\in\mathbb{N}}}(f)$ is finite almost everywhere on $\mathbb{R}^n$;

\item[$\mathrm{(ii)}$] if $f\in L^q$, then
$T(f)-T^{(s)}_{\{t_i\}_{i\in\mathbb{N}}}(f)\in\mathcal{P}_s$;

\item[$\mathrm{(iii)}$]
if $\{s_j\}_{j\in\mathbb{N}}$
is another sequence of $(0,\infty)$ satisfying the same
conditions as $\{t_i\}_{i\in\mathbb{N}}$,
then $$T^{(s)}_{\{t_i\}_{i\in\mathbb{N}}}(f)-
T^{(s)}_{\{s_j\}_{j\in\mathbb{N}}}(f)\in\mathcal{P}_s.$$
\end{enumerate}
\end{proposition}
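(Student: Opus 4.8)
The plan is to prove the three assertions in turn by inspecting, annulus by annulus, the three integrals that make up $T^{(s)}_{\{t_i\}_{i\in\mathbb{N}}}(f)$ on $B_i\setminus B_{i-1}$, feeding in the $L^q$-boundedness of $T$ (Proposition \ref{sczbouned}), the kernel representation of $T$ on $L^q$ (Remark \ref{lq}(i)), and the size and regularity estimates of Definition \ref{skernel}. It is convenient to abbreviate $P^s_x(\mathbf{0},y):=\sum_{\{\gamma\in\mathbb{Z}_+^n:\,|\gamma|\le s\}}\frac{\partial_{(1)}^{\gamma}K(\mathbf{0},y)}{\gamma!}x^\gamma$. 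The analytic input I would record at the outset --- a standard consequence of Definition \ref{skernel}(ii) and (iv), obtained by an iterated Taylor expansion of $K(\cdot,y)$ about $\mathbf{0}$ --- is that, for all $x,y\in\mathbb{R}^n$ with $2|x|\le|y|$ and all $\gamma\in\mathbb{Z}_+^n$ with $|\gamma|\le s$,
\begin{align*}
\left|K(x,y)-P^s_x(\mathbf{0},y)\right|\lesssim\frac{|x|^{s+\delta}}{|y|^{n+s+\delta}}
\qquad\text{and}\qquad
\left|\partial_{(1)}^{\gamma}K(\mathbf{0},y)\right|\lesssim\frac{1}{|y|^{n+|\gamma|}}.
\end{align*}

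For (i), fix $i\in\mathbb{N}$ and $x\in B_i\setminus B_{i-1}$, so $|x|<t_i$. Since $\mathbf{1}_{2B_i}f\in L^q$ and $T$ is bounded on $L^q$, the term $T(\mathbf{1}_{2B_i}f)(x)$ is finite for almost every such $x$. When $y\notin 2B_i$ one has $|y|\ge2t_i>2|x|$, so the first estimate above and \eqref{locq} yield
$$\int_{\mathbb{R}^n\setminus2B_i}\left|K(x,y)-P^s_x(\mathbf{0},y)\right||f(y)|\,dy
\lesssim|x|^{s+\delta}\int_{|y|\ge2t_1}\frac{|f(y)|}{|y|^{n+s+\delta}}\,dy<\infty;$$
when $y\in2B_i\setminus2B_1$ one has $2t_1\le|y|<2t_i$, hence $|y|^{-n-|\gamma|}\le(2t_i)^{s+\delta-|\gamma|}|y|^{-n-s-\delta}$ because $|\gamma|\le s$, and the second estimate gives
$$\int_{2B_i\setminus2B_1}\left|P^s_x(\mathbf{0},y)\right||f(y)|\,dy
\lesssim\sum_{|\gamma|\le s}|x|^{|\gamma|}(2t_i)^{s+\delta-|\gamma|}\int_{|y|\ge2t_1}\frac{|f(y)|}{|y|^{n+s+\delta}}\,dy<\infty.$$
Since $\mathbb{R}^n=\bigcup_{i\in\mathbb{N}}(B_i\setminus B_{i-1})$, this proves (i).

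For (ii), assume $f\in L^q$. Fix $i$ and almost every $x\in B_i\setminus B_{i-1}$. As $\mathbf{1}_{(2B_i)^\complement}f\in L^q$ and $x$ lies outside $\operatorname{supp}(\mathbf{1}_{(2B_i)^\complement}f)\subset(2B_i)^\complement$, Remark \ref{lq}(i) gives $T(\mathbf{1}_{(2B_i)^\complement}f)(x)=\int_{(2B_i)^\complement}K(x,y)f(y)\,dy$. Substituting $T(f)=T(\mathbf{1}_{2B_i}f)+T(\mathbf{1}_{(2B_i)^\complement}f)$ and then subtracting the definition of $T^{(s)}_{\{t_i\}_{i\in\mathbb{N}}}(f)(x)$, the summand $T(\mathbf{1}_{2B_i}f)(x)$ and the full-kernel integral cancel, leaving
$$T(f)(x)-T^{(s)}_{\{t_i\}_{i\in\mathbb{N}}}(f)(x)
=\int_{(2B_i)^\complement}P^s_x(\mathbf{0},y)f(y)\,dy+\int_{2B_i\setminus2B_1}P^s_x(\mathbf{0},y)f(y)\,dy
=\int_{(2B_1)^\complement}P^s_x(\mathbf{0},y)f(y)\,dy,$$
which is the polynomial $\sum_{|\gamma|\le s}c_\gamma x^\gamma$ with $c_\gamma:=\frac{1}{\gamma!}\int_{(2B_1)^\complement}\partial_{(1)}^{\gamma}K(\mathbf{0},y)f(y)\,dy$. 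Each $c_\gamma$ is finite, since $|y|^{-n-|\gamma|}\in L^{q'}(\{|y|\ge2t_1\})$ (using $q\in(1,\infty)$) and $f\in L^q$, and none of them depends on $i$; hence $T(f)-T^{(s)}_{\{t_i\}_{i\in\mathbb{N}}}(f)$ agrees almost everywhere with a single element of $\mathcal{P}_s$, which is (ii).

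Finally, for (iii), fix almost every $x$ and let $i,j$ be determined by $x\in B_i\setminus B_{i-1}$ and $x\in\widetilde{B}_j\setminus\widetilde{B}_{j-1}$, where $\widetilde{B}_j:=B(\mathbf{0},s_j)$; since $2B_i$ and $2\widetilde{B}_j$ are concentric, one contains the other, say $2B_i\subseteq2\widetilde{B}_j$. As $\mathbf{1}_{2\widetilde{B}_j\setminus2B_i}f$ has compact support, hence lies in $L^q$, and $x\in2B_i$, Remark \ref{lq}(i) gives $T(\mathbf{1}_{2\widetilde{B}_j}f)(x)-T(\mathbf{1}_{2B_i}f)(x)=\int_{2\widetilde{B}_j\setminus2B_i}K(x,y)f(y)\,dy$. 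Subtracting the two definitions and using this identity, all the $K(x,y)$-integrals and the $T$-terms cancel exactly as in (ii), and a short bookkeeping of the surviving integrals over the bounded shells $2\widetilde{B}_j\setminus2B_1$, $2B_i\setminus2B_1$, $2\widetilde{B}_j\setminus2\widetilde{B}_1$ and $2\widetilde{B}_j\setminus2B_i$ shows that $T^{(s)}_{\{t_i\}_{i\in\mathbb{N}}}(f)(x)-T^{(s)}_{\{s_j\}_{j\in\mathbb{N}}}(f)(x)$ equals, up to a fixed sign, $\int_E P^s_x(\mathbf{0},y)f(y)\,dy$, where $E$ is the fixed annulus between the spheres of radii $2t_1$ and $2s_1$; as $E$ is bounded and bounded away from $\mathbf{0}$ and $f\in L^q_{\rm loc}$, this is a polynomial of degree at most $s$, independent of $x$, which gives (iii). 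The analytic estimates here are routine once the Taylor-remainder bound for $K$ is in hand, so I anticipate that the main obstacle is this bookkeeping in (ii) and especially (iii) --- in particular, checking that the two cases $2B_i\subseteq2\widetilde{B}_j$ and $2\widetilde{B}_j\subseteq2B_i$ produce the same polynomial, so that $T^{(s)}_{\{t_i\}_{i\in\mathbb{N}}}(f)-T^{(s)}_{\{s_j\}_{j\in\mathbb{N}}}(f)$ is well defined almost everywhere as a single element of $\mathcal{P}_s$.
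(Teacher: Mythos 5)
Your proof is correct and is essentially the standard argument that the paper itself does not write out (it only refers to a slight modification of the proof of \cite[Proposition 3.6]{jly}): the Taylor-remainder bound from Definition \ref{skernel}(ii) and (iv), the $L^q$-boundedness of $T$, the kernel representation off the support from Remark \ref{lq}(i), and the set-algebra cancellation of the truncated pieces, with \eqref{locq} (resp. $f\in L^q$ and H\"older) guaranteeing absolute convergence of the tail integrals in (i) (resp. of the coefficients $c_\gamma$ in (ii)). The bookkeeping you flag in (iii) does close: in either nesting case ($2B_i\subseteq 2\widetilde B_j$ or $2\widetilde B_j\subseteq 2B_i$) the difference reduces to $\pm\int_E P^s_x(\mathbf{0},y)f(y)\,dy$ with $E$ the fixed annulus between the radii $2\min\{t_1,s_1\}$ and $2\max\{t_1,s_1\}$ (sign according to whether $t_1\ge s_1$ or not), which is a single element of $\mathcal P_s$ since $E$ is bounded, bounded away from $\mathbf 0$, and $f\in L^q_{\rm loc}$.
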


As an application of Proposition
\ref{mzcwell}, we now prove that
the modified Calder\'on--Zygmund operator is also
well-defined
on certain matrix-weighted Campanato spaces.

\begin{theorem}\label{sczldy}
Let $p\in(0,\infty)$, $q\in[1,\infty)$, $s\in
[\lceil n(\frac{1}{p}-1)\rceil,\infty)\cap\mathbb Z_+$, $\delta\in(0,1]$,
$W\in A_p$, and $K$ be an $(s,\delta)$-standard kernel.
If $\{t_i\}_{i\in\mathbb{N}}$
and $T^{(s)}_{\{t_i\}_{i\in\mathbb{N}}}$ are as in Definition \ref{mcz},
then
\begin{enumerate}
\item[$\mathrm{(i)}$]
for any $\vec f\in\mathcal L_{p,q,s,W}$,
$T^{(s)}_{\{t_i\}_{i\in\mathbb{N}}}(\vec f)$ is finite
almost everywhere on $\mathbb{R}^n$;

\item[$\mathrm{(ii)}$]
if $\{s_j\}_{j\in\mathbb{N}}$
is another sequence of $(0,\infty)$ satisfying the same
conditions as $\{t_i\}_{i\in\mathbb{N}}$,
then, for any $\vec f\in\mathcal L_{p,q,s,W}$,
$T^{(s)}_{\{t_i\}_{i\in\mathbb{N}}}(\vec f)
-T^{(s)}_{\{s_j\}_{j\in\mathbb{N}}}(\vec f)=\vec P$
with $\vec P\in(\mathcal{P}_s)^m$.
\end{enumerate}
\end{theorem}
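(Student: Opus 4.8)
The plan is to reduce Theorem~\ref{sczldy} to the scalar Proposition~\ref{mzcwell}, applied to each coordinate of $\vec f$. Write $\vec f=(f_1,\ldots,f_m)$; since $T^{(s)}_{\{t_i\}_{i\in\mathbb N}}$ acts coordinatewise and, by definition, $\vec f\in\mathcal L_{p,q,s,W}\subset(L^q_{\mathrm{loc}})^m$, it suffices to verify that each $f_j$ belongs to $L^{q_0}_{\mathrm{loc}}$ for some $q_0\in(1,\infty)$ and satisfies the tail condition \eqref{locq}, i.e. $\int_{|y|\ge 2t_1}|f_j(y)|\,|y|^{-n-s-\delta}\,dy<\infty$. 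When $q>1$ the local $L^{q_0}$-membership is immediate with $q_0=q$ (on any cube $Q$ one has $|\vec f|\le\|A_Q\|\,|A_Q^{-1}\vec f|$ and $A_Q^{-1}\vec f=A_Q^{-1}(\vec f-\vec P_Q)+A_Q^{-1}\vec P_Q\in L^q(Q)+L^\infty(Q)$, where $\vec P_Q\in(\mathcal P_s)^m$ nearly realizes the Campanato infimum on $Q$); when $q=1$ one first passes to an exponent in $(1,\infty)$ via the reverse H\"older inequality of Lemma~\ref{86} (equivalently, via the independence of $\mathcal L_{p,q,s,W}$ of $q$). Granting \eqref{locq}, Proposition~\ref{mzcwell}(i) gives part~(i), and Proposition~\ref{mzcwell}(iii) applied coordinatewise gives part~(ii), with $\vec P\in(\mathcal P_s)^m$ the $m$-tuple of the scalar exceptional polynomials.

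To check \eqref{locq}, fix $t_1$ and, for $k\in\mathbb Z_+$, let $Q_k$ be the cube centered at $\mathbf{0}$ with $\ell(Q_k)\sim 2^k t_1$, chosen so that $\{|y|\ge 2t_1\}\subset\bigcup_{k\ge0}(Q_{k+1}\setminus Q_k)$; pick $\vec P_{Q_k}\in(\mathcal P_s)^m$ nearly realizing the infimum defining $\|\vec f\|_{\mathcal L_{p,q,s,W}}$ on $Q_k$. By H\"older's inequality and the definition of $\mathcal L_{p,q,s,W}$,
$$
\int_{Q_k}\bigl|\vec f(x)-\vec P_{Q_k}(x)\bigr|\,dx
\le\|A_{Q_k}\|\int_{Q_k}\bigl|A_{Q_k}^{-1}[\vec f-\vec P_{Q_k}]\bigr|\,dx
\lesssim\|A_{Q_k}\|\,|Q_k|^{\frac1p}\,\|\vec f\|_{\mathcal L_{p,q,s,W}}.
$$
A standard telescoping argument---comparing $\vec P_{Q_{j+1}}$ with $\vec P_{Q_j}$ on the smaller cube $Q_j$ and using the elementary estimates $\|\vec P\|_{L^\infty(Q)}\lesssim|Q|^{-1}\int_Q|\vec P|$ and $\|\vec P\|_{L^\infty(\lambda Q)}\lesssim\lambda^{s}\|\vec P\|_{L^\infty(Q)}$ for $\lambda\ge1$ and $\vec P\in(\mathcal P_s)^m$---then bounds $\int_{Q_{k+1}\setminus Q_k}|\vec f|$ by $\int_{Q_{k+1}}|\vec f-\vec P_{Q_{k+1}}|$ plus a term controlled by $\sum_{j=0}^{k}2^{(k-j)s}\|A_{Q_j}\|\,|Q_j|^{\frac1p-1}$ coming from the growth of the polynomials $\vec P_{Q_k}$. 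The remaining ingredient is cross-scale control of $\|A_{Q_k}\|$, which Lemma~\ref{p01} provides: since the $Q_k$ are concentric at $\mathbf{0}$, it yields $\|A_{Q_k}A_{Q_0}^{-1}\|\lesssim 2^{k\widetilde d(1-\frac1p)}$ and $\|A_{Q_{k+1}}\|\sim\|A_{Q_k}\|$, where $\widetilde d=0$ if $p\in(0,1]$ and $\widetilde d<n$ if $p\in(1,\infty)$. Substituting these into $\sum_{k\ge0}\ell(Q_k)^{-n-s-\delta}\int_{Q_{k+1}}|\vec f|$, the factor $2^{ks}$ from polynomial growth is exactly cancelled by the weight $|y|^{-n-s-\delta}$, and the resulting dyadic series converges because $\delta>0$ and $s\ge\lceil n(\frac1p-1)\rceil$ (together with $\widetilde d<n$ when $p>1$); in the borderline case $n(\frac1p-1)\in\mathbb Z_+$ an extra factor linear in $k$ appears, but $\sum_k (k+1)2^{-k\delta}<\infty$ absorbs it.

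The main obstacle is precisely this tail estimate: one must simultaneously track the growth of the local polynomials $\vec P_{Q_k}$ (as large as $|y|^s$ on $Q_k$) and of the reducing operators $\|A_{Q_k}\|$ across scales, and verify that the exponents combine into a summable series under the stated hypothesis on $s$. This is also where the assumption $s\ge\lceil n(\frac1p-1)\rceil$---with a ceiling, in contrast to the floor appearing in Theorem~\ref{dual}---is used: it guarantees $s+\delta>n(\frac1p-1)$ for every admissible $\delta\in(0,1]$, whereas the floor would not. Beyond this bookkeeping, the argument reduces entirely to Proposition~\ref{mzcwell} and Lemma~\ref{p01}, so no essentially new difficulty arises.
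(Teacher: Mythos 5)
Your proposal follows essentially the same route as the paper: both reduce Theorem \ref{sczldy} to Proposition \ref{mzcwell} by verifying the tail condition \eqref{locq}, estimating $\int_{Q_0^\complement}|\vec f(y)-\Pi^s_{Q_0}(\vec f)(y)|\,|y|^{-n-s-\delta}\,dy$ over dyadic dilates of a cube centered at the origin with the growth of reducing operators controlled by Lemma \ref{p01} (exponent $\widetilde d(1-\frac1p)$) and the polynomial part handled separately; the paper merely outsources your explicit telescoping of the projections $\Pi^s_{2^kQ}$ to a cited lemma, and your exponent bookkeeping, including the borderline case $n(\frac1p-1)\in\mathbb Z_+$, is correct. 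One caveat on a side remark: when $q=1$, the reverse H\"older inequality of Lemma \ref{86} concerns the weight $W^{1/p}A_Q^{-1}$ and does not yield higher local integrability of $\vec f$, and the $q$-independence of Corollary \ref{2247} is only available for $p\in(0,1]$ --- but the paper's own proof does not address this point either, and your main argument does not depend on it.
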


\begin{proof}
Let $\vec f\in\mathcal L_{p,q,s,W}$.
To prove the present theorem,
by using Proposition \ref{mzcwell}, we
conclude that
we only need to show that $\vec f$ satisfies \eqref{locq}
with $f$ therein replaced by $\vec f$.

To this end, we first establish a useful estimate, which will
be used repeatedly later. Indeed, from the definition of $\mathcal L_{p,q,s,W}$,
we deduce that $\vec f\in (L^q_{\rm loc})^m$.
Let $Q$ be a cube centered at $x\in\mathbb R^n$ with edge length $t\in(0,\infty)$.
By \cite[Lemma 2.20]{jtyyz22i}, the H\"older inequality, Lemma \ref{p01},
and the definition of $\mathcal L_{p,q,s,W}$,
we find that, for any $\vec f\in\mathcal L_{p,q,s,W}$,
\begin{align}\label{xz}
&\int_{{\mathbb{R}^n}\setminus Q}
\frac{|\vec f(y)-\Pi_Q^s(\vec f)(y)|}
{|x-y|^{n+s+\delta}}\,dy\nonumber\\
&\quad\lesssim
\sum_{k\in\mathbb{N}}
\left(2^kt\right)^{-s-{\delta}}
\left[\fint_{2^kQ}
\left|\vec f(y)-\Pi_{Q}^s\left(\vec f\right)(y)\right|^q
\,dy\right]^\frac{1}{q}\nonumber\\
&\quad\lesssim
\sum_{k\in\mathbb{N}}
\left(2^kt\right)^{-s-{\delta}}
\left[\fint_{2^kQ}
\left|\vec f(y)-\Pi_{2^kQ}^s\left(\vec f\right)(y)\right|^q
\,dy\right]^\frac{1}{q}\nonumber\\
&\quad\le
\sum_{k\in\mathbb{N}}
\left(2^kt\right)^{-s-{\delta}}\left\|A_{2^kQ}A_Q^{-1}\right\|
\left\|A_Q\right\|
\left\{\fint_{2^kQ}
\left|A_{2^kQ}^{-1}\left[\vec f(y)
-\Pi_{2^kQ}^s\left(\vec f\right)(y)\right]\right|^q
\,dy\right\}^\frac{1}{q}\nonumber\\
&\quad\lesssim\sum_{k\in\mathbb{N}}\left(2^kt\right)^{-s-{\delta}}
2^{k\widetilde{d}(1-\frac{1}{p})}\left\|A_Q\right\|\left|2^kQ\right|^{-1+\frac1{p}}
\left\|\vec f\right\|_{\mathcal L_{p,q,s,W}}\nonumber\\
&\quad\lesssim t^{-s-\delta-n(1-\frac 1{p})}\left\|A_Q\right\|
\left\|\vec f\right\|_{\mathcal L_{p,q,s,W}},
\end{align}
where $\widetilde{d}$ is the same as in Lemma \ref{p01}.

Let $Q_0$ be a cube centered at $\mathbf 0$ with edge length $2n^{-\frac12}t_1$.
Since $\vec f\in (L^q_{\rm{loc}})^m\subset (L^1_{\rm{loc}})^m$, we may
assume that, for any $i\in\{1,\ldots,m\}$ and $x\in\mathbb R^n$,
$$\left[\Pi^s_{Q_0}\left(\vec f\right)\right]_i(x):=\sum_{\{\gamma\in\mathbb Z^n_+:  |\gamma|\leq s\}}
c_{\gamma,i} x^\gamma.$$
Applying \eqref{xz} and the H\"older inequality, we obtain
\begin{align}\label{dtwhs}
&\int_{|y|\ge2t_1}\frac{|f(y)|}{|y|^{n+s+\delta}}\,dy\nonumber\\
&\quad\le\int_{Q_0^\complement}\frac{|f(y)-\Pi^s_{Q_0}(\vec f)(y)|}{|y|^{n+s+\delta}}\,dy
+\int_{Q_0^\complement}\frac{|\Pi^s_{Q_0}(\vec f)(y)|}{|y|^{n+s+\delta}}\,dy\nonumber\\
&\quad\lesssim t_1^{-s-\delta-n(1-\frac 1{p})}\left\|A_{Q_0}\right\|
\left\|\vec f\right\|_{\mathcal L_{p,q,s,W}}
+\int_{Q_0^\complement}\sum_{i=1}^m\sum_{\{\gamma\in\mathbb Z^n_+:  |\gamma|\leq s\}}
|c_{\gamma,i}|\,|y|^{|\gamma|-n-s-\delta}\,dy\nonumber\\&\quad<\infty.
\end{align}
This further implies that $\vec f$ satisfies \eqref{locq} and
hence finishes the proof of Theorem \ref{sczldy}.
\end{proof}

By Theorem \ref{sczldy}, we
find that, in the matrix-weighted Campanato space,
the operator $T^{(s)}_{\{t_j\}_{j\in\mathbb{N}}}$
is independent of the choice of
$\{t_j\}_{j\in\mathbb{N}}$;
therefore, in what follows, we simply write
$T^{(s)}_{\{t_j\}_{j\in\mathbb{N}}}$
as $\widetilde{T}$.
Next, we establish the boundedness characterization
of $\widetilde{T}$ on
matrix-weighted Campanato spaces.
To this end, recall
that a Calder\'on--Zygmund operator $T$
is said to satisfy the \emph{vanishing conditions up to order $s$}
if, for any $\gamma\in\mathbb Z_+^n$ with $|\gamma|\leq s$,
$T(x^\gamma)=0$, which means that, for any $a\in L^2$ satisfying that
$\int_{{\mathbb{R}^n}} a(x)x^\gamma\,dx=0$
with compact support,
\begin{align*}
\int_{\mathbb{R}^n}T^*(a)(x)x^\gamma\,dx=0,
\end{align*}
where $T^*$ denotes the adjoint operator of
$T$ on $L^2$.
Based on this, we obtain the following
boundedness characterization
of $\widetilde{T}$ on
matrix-weighted Campanato spaces.

\begin{theorem}\label{btbbmc}
Let $p\in(0,\infty)$, $q\in[1,\infty)$, $\delta\in(0,1]$, $s\in
[\lceil n(\frac{1}{p}-1)\rceil,\infty)\cap\mathbb Z_+$, and $W\in A_p$.
Assume that $K$ is an $(s,\delta)$-standard kernel as in Definition \ref{skernel}
and $\widetilde{T}$ an $(s,\delta)$-modified
Calder\'on--Zygmund operator associated with the kernel $K$.
Then $\widetilde{T}$ is bounded on $\mathcal L_{p,q,s,W}$
if and only if, for any $\gamma\in\mathbb Z_+^n$ with $|\gamma|\leq s$,
$T(x^\gamma)=0$.
\end{theorem}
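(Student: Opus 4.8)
The plan is to prove both directions by testing $\widetilde{T}$ against atoms and exploiting the duality between $H^p_W$ and $\mathcal L_{p,q,s,W}$ established in Theorem~\ref{dual}.

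First I would prove the sufficiency: assume $T(x^\gamma)=0$ for all $\gamma\in\mathbb Z_+^n$ with $|\gamma|\le s$ and show $\widetilde{T}$ is bounded on $\mathcal L_{p,q,s,W}$. The key reduction is the following pairing identity: for a suitable $(p,q',s)_W$-atom $\vec a$ supported in a cube $Q$ and for $\vec f\in\mathcal L_{p,q,s,W}$, one has $\int_{\mathbb R^n}\vec a(x)\,\widetilde{T}(\vec f)(x)\,dx=\int_{\mathbb R^n}T^*(\vec a)(x)\,\vec f(x)\,dx$, which holds because the vanishing conditions on $T$ force the correction polynomial terms in Definition~\ref{mcz} to pair to zero against $\vec a$ (the moment conditions on $\vec a$ kill the polynomial $\sum_{|\gamma|\le s}\frac{\partial^\gamma_{(1)}K(\mathbf 0,y)}{\gamma!}x^\gamma$). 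Then, since $T(x^\gamma)=0$ means $T^*(\vec a)$ itself has vanishing moments up to order $s$, a standard Calder\'on--Zygmund estimate (splitting $\mathbb R^n$ into $2Q$ and the annuli $2^{k+1}Q\setminus 2^kQ$, using the size bound on $2Q$ via Proposition~\ref{sczbouned} in $L^{q'}$ and the regularity of $K$ together with the moments of $\vec a$ on the far annuli, and controlling the matrix weight factors via Lemma~\ref{p01} and the reverse H\"older inequality Lemma~\ref{86}) shows that a fixed multiple of $T^*(\vec a)$ is again a $(p,q',s)_W$-atom up to the usual harmless modifications. Feeding this back through the pairing and the characterization of $\|\cdot\|_{\mathcal L_{p,q,s,W}}$ in terms of testing against atoms (which is essentially the computation \eqref{yy} in the proof of Theorem~\ref{dual}, run in the contrapositive direction via Corollary~\ref{2247} and Lemma~\ref{hd}) yields $\|\widetilde{T}(\vec f)\|_{\mathcal L_{p,q,s,W}}\lesssim\|\vec f\|_{\mathcal L_{p,q,s,W}}$.

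For the necessity, I would argue by contradiction: suppose $\widetilde{T}$ is bounded on $\mathcal L_{p,q,s,W}$ but $T(x^{\gamma_0})\ne 0$ for some $\gamma_0$ with $|\gamma_0|\le s$. Using the boundedness of $\widetilde T$ together with the duality of Theorem~\ref{dual}, the adjoint of $\widetilde T$ (suitably interpreted on $H^p_W$) would be bounded on $H^p_W$; applying it to an atom $\vec a$ with $\int \vec a(x)\,x^\gamma\,dx=\mathbf 0$ for $|\gamma|\le s$ and pairing with the polynomial $x^{\gamma_0}\in\mathcal L_{p,q,s,W}$ (which represents the zero functional modulo $(\mathcal P_s)^m$ by Remark~\ref{re74}(ii), hence must pair to a controlled quantity), one derives that $\int_{\mathbb R^n}T^*(\vec a)(x)\,x^{\gamma_0}\,dx=0$ for all such $\vec a$, i.e. $T(x^{\gamma_0})=0$, a contradiction. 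Equivalently and more directly, one tests $\widetilde T$ on the polynomial $\vec f\equiv x^\gamma$ itself: the vanishing conditions are exactly what is needed to ensure $\widetilde T(x^\gamma)$ stays in $(\mathcal P_s)^m$ (i.e. has zero Campanato norm), and the obstruction to this being a finite Campanato quantity produces the needed contradiction via the scaling in \eqref{xz}.

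The main obstacle I expect is the careful bookkeeping in the sufficiency direction showing that $T^*(\vec a)$ (or rather a harmless modification of it) is a $(p,q',s)_W$-atom: one must verify the three atom axioms of Definition~\ref{F-atom} for a function that is no longer compactly supported, which forces either a further atomic decomposition of the tail of $T^*(\vec a)$ into atoms on the dilates $2^kQ$ with $\ell^p$-summable coefficients, or a direct molecular-type estimate. Controlling the matrix-weight normalization $\|A_{2^kQ}\vec{\,\cdot\,}\|$ across scales --- which is where Lemma~\ref{p01}'s growth bound $\|A_{2^kQ}A_Q^{-1}\|\lesssim 2^{k\widetilde d(1-1/p)}$ enters and interacts with the decay $2^{-k(s+\delta)}$ coming from the kernel regularity --- requires the constraint $s\ge\lceil n(1/p-1)\rceil$ (together with $\widetilde d<n$) to make the geometric series converge, mirroring exactly the estimate \eqref{xz} already carried out in the proof of Theorem~\ref{sczldy}. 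The polynomial-correction terms in Definition~\ref{mcz}, and verifying that the pairing identity is justified (absolute convergence of all integrals, which is where condition \eqref{locq} and Proposition~\ref{mzcwell} are invoked), are the remaining technical points but are routine given the groundwork already laid.
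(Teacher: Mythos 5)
Your necessity argument (in its second, ``more direct'' form) is essentially the paper's: boundedness of $\widetilde T$ plus $\|\vec q\|_{\mathcal L_{p,q,s,W}}=0$ for $\vec q\in(\mathcal P_s)^m$ forces $\widetilde T(x^\gamma)\in\mathcal P_s$, and one then needs the separate equivalence ``$\widetilde T(x^\gamma)\in\mathcal P_s$ iff $T(x^\gamma)=0$'' (Lemma \ref{2501132155} in the paper), which itself rests on the adjoint pairing identity of Lemma \ref{fgduiou}; you gesture at this but do not supply it. The serious problem is your sufficiency direction. You route everything through the duality $(H^p_W)^*=\mathcal L_{p,q,s,W}$, the atomic machinery, Lemma \ref{hd}, and Corollary \ref{2247} -- but all of these are established (and can only be expected to hold) for $p\in(0,1]$, whereas Theorem \ref{btbbmc} is stated for the full range $p\in(0,\infty)$. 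For $p\in(1,\infty)$ there is no Hardy-space dual available in the paper, so the testing-against-atoms characterization of $\|\cdot\|_{\mathcal L_{p,q,s,W}}$ that your argument hinges on simply does not exist; the proof as proposed cannot cover more than $p\le 1$.

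Even on $p\in(0,1]$ your plan has an unresolved step that you yourself flag: $T^*(\vec a)$ is not an atom (no compact support, no exact size normalization), and turning the tail into a convergent atomic decomposition with $\ell^p$-summable coefficients in the matrix-weighted setting is precisely a molecular theory for $H^p_W$ that the paper never develops and that is not ``harmless''; in effect you would be reproving the $H^p_W$-boundedness of $T^*$ (the content of Theorem \ref{CZ}, which in the paper is deduced \emph{from} Theorem \ref{btbbmc}, not the other way around), so the logical order would need care to avoid circularity with the cited results. The paper's sufficiency proof avoids all of this by a direct estimate of the Campanato norm: on each cube $Q\subset B_1$ it writes $T_{\{\sqrt n t_i\}}(\vec f)$ as a local piece $T([\vec f-\Pi^s_{cQ}\vec f]\mathbf 1_{cQ})$ handled by the $L^q$-boundedness of $T$ (Proposition \ref{sczbouned}), a Taylor-corrected far-field piece $E(\vec f,z,t)$ handled by the kernel regularity, annuli, and Lemma \ref{p01} (exactly the mechanism of \eqref{xz}), plus terms ($F(\vec f,z,t)$ and $T_{\{\sqrt n t_i\}}(\Pi^s_{cQ}\vec f)$, the latter polynomial by Lemma \ref{2501132155} under the vanishing hypothesis) that lie in $(\mathcal P_s)^m$ and are removed by the infimum over polynomials. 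That argument works uniformly for all $p\in(0,\infty)$; to repair your proposal you would either need to adopt such a direct estimate for $p>1$ or restrict the claim to $p\le1$ and still supply the molecular decomposition.
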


To prove this theorem, we need the following two lemmas.

\begin{lemma}\label{fgduiou}
Let $p\in(0,\infty)$, $q\in[1,\infty)$, $\delta\in(0,1]$,
$s\in[\lceil n(\frac{1}{p}-1)\rceil,\infty)\cap\mathbb Z_+$, and $W\in A_p$.
Assume that $K$ is an $(s,\delta)$-standard kernel as in Definition \ref{skernel}.
If $T$ is an $(s,\delta)$-Calder\'on--Zygmund operator associated with the kernel $K$
and $\widetilde{T}$ the corresponding $(s,\delta)$-modified one as in Definition \ref{mcz},
then, for any $\vec f\in (L^{q'}_s)^m$ with compact
support and
for any $\vec g\in\mathcal L_{p,q,s,W}$,
\begin{align*}
\int_{\mathbb{R}^n}T^*\left(\vec f\right)(x)\vec g(x)\,dx
=\int_{\mathbb{R}^n}\vec f(x)\widetilde{T}\left(\vec g\right)(x)\,dx.
\end{align*}
\end{lemma}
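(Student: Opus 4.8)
The plan is to verify the identity by unwinding the definition of $\widetilde T = T^{(s)}_{\{t_i\}_{i\in\mathbb N}}$ and reducing everything to the $L^2$-adjoint relation \eqref{71} together with the kernel representation \eqref{eqscz}. First I would fix $\vec f\in(L^{q'}_s)^m$ with $\operatorname{supp}\vec f\subset B_{i_0}$ for some $i_0\in\mathbb N$ and $\vec g\in\mathcal L_{p,q,s,W}$; by Theorem \ref{sczldy}, $\widetilde T(\vec g)$ is finite almost everywhere and, by the definition of $\mathcal L_{p,q,s,W}$, $\vec g\in(L^q_{\mathrm{loc}})^m$ satisfies \eqref{locq} (with $\vec g$ in place of $f$), so the integral $\int_{\mathbb R^n}\vec f(x)\widetilde T(\vec g)(x)\,dx$ is absolutely convergent. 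Since both sides are bilinear, it suffices to treat each scalar component, i.e. to prove $\int_{\mathbb R^n} T^*(f)(x)g(x)\,dx = \int_{\mathbb R^n} f(x)\widetilde T(g)(x)\,dx$ for a scalar $f\in L^{q'}_s$ with compact support and $g\in L^q_{\mathrm{loc}}$ satisfying \eqref{locq}.

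Next I would plug in the three-term expression for $\widetilde T(g)$ from Definition \ref{mcz}. On the support of $f$ we only see finitely many annuli $B_i\setminus B_{i-1}$ with $i\le i_0$; but more conveniently, since $f\in L^{q'}_s$ has compact support, choose a single large ball $2B_I\supset\operatorname{supp} f$ and split $g = g\mathbf1_{2B_I} + g\mathbf1_{(2B_I)^\complement}$. For the near part $g\mathbf1_{2B_I}\in L^{q'}$ — wait, rather $g\mathbf 1_{2B_I}\in L^q\subset L^1$ and we want to pair against $f\in L^{q'}$ — Proposition \ref{mzcwell}(ii) gives $T(g\mathbf1_{2B_I}) - \widetilde T(g\mathbf1_{2B_I})\in\mathcal P_s$, and since $\Pi^s(\vec f)=\vec 0$ means $f\perp\mathcal P_s$ on its support (more precisely $\int f\cdot P=0$ for all $P\in\mathcal P_s$, because $f\in L^{q'}_s$ is defined by $\Pi^s_{\operatorname{supp}}$ vanishing — here I would use property \eqref{fy}), that polynomial difference contributes nothing when paired with $f$. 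So $\int f\cdot\widetilde T(g\mathbf1_{2B_I}) = \int f\cdot T(g\mathbf1_{2B_I}) = \int T^*(f)\cdot g\mathbf1_{2B_I}$, the last step by \eqref{71} extended to $L^{q'}/L^q$ duality via Remark \ref{lq}(ii). For the far part, on $\operatorname{supp} f$ the function $x\mapsto T(g\mathbf1_{2B_I})(x)$ is given by the kernel integral and the subtracted Taylor polynomial $\sum_{|\gamma|\le s}\frac{\partial^\gamma_{(1)}K(\mathbf0,y)}{\gamma!}x^\gamma$ is itself an element of $\mathcal P_s$ (in $x$) for each fixed $y$, hence integrating it against $g\mathbf1_{(2B_I)^\complement}(y)\,dy$ — which converges by \eqref{locq} — still produces a polynomial in $\mathcal P_s$, again annihilated by $f$. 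Thus $\int f\cdot\widetilde T(g\mathbf1_{(2B_I)^\complement}) = \int\!\!\int f(x)K(x,y)g(y)\mathbf1_{(2B_I)^\complement}(y)\,dy\,dx$, and Fubini (justified by the size estimate \eqref{size-s'}, the compact support of $f$, and \eqref{locq}) turns this into $\int T^*(f)(y)g(y)\mathbf1_{(2B_I)^\complement}(y)\,dy$ using $\widetilde K(y,x)=K(x,y)$ and Remark \ref{lq}(i)–(ii). Summing the near and far parts gives the claim.

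The main obstacle I anticipate is the bookkeeping around the $(s,\delta)$-modified operator's definition: $\widetilde T(g)$ is literally defined annulus-by-annulus with three terms per annulus, and the cleanest route is to first invoke Proposition \ref{mzcwell}(iii) to replace the given sequence $\{t_i\}_{i\in\mathbb N}$ by one adapted to $\operatorname{supp} f$ (this only changes $\widetilde T(g)$ by a fixed element of $\mathcal P_s$, irrelevant after pairing with $f$), reducing to the situation where a single ball $2B_I$ contains $\operatorname{supp} f$. One must be careful that the "middle" subtraction $-\int_{2B_i\setminus 2B_1}(\dots)$ in Definition \ref{mcz}, being a polynomial in $x$ on $\mathcal P_s$, likewise drops out. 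The other delicate point is justifying all the interchanges of integration: this rests on combining the compact support of $f$, the standard kernel bound \eqref{size-s'} applied on $\{|x-y|\gtrsim l(B_I)\}$, and the tail condition \eqref{locq} for $g$, which together make every iterated integral absolutely convergent; I would record this as one estimate at the start and then apply Fubini freely. Since the proof of this type of duality identity is fairly routine once the reductions are in place, and the paper's phrasing ("we omit the details") in analogous spots suggests the authors may abbreviate, I would present the argument at the level of these reductions plus the convergence estimate, leaving the elementary polynomial-annihilation verifications to the reader.
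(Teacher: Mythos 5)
The skeleton of your argument (reduce, via Theorem \ref{sczldy}(ii)/Proposition \ref{mzcwell}(iii) and the vanishing moments of $\vec f$, to a sequence adapted to $\operatorname{supp}\vec f$; handle the near piece $\vec g\mathbf 1_{2B_1}$ through Proposition \ref{mzcwell}(ii) and the $L^{q}$--$L^{q'}$ duality of Remark \ref{lq}(ii)) matches the paper, and those steps are sound. The genuine gap is in your far-part treatment. You split the corrected kernel $K(x,y)-\sum_{|\gamma|\le s}\frac{\partial^\gamma_{(1)}K(\mathbf 0,y)}{\gamma!}x^\gamma$ into its two summands, claiming (a) that $\int_{(2B_I)^\complement}\partial^\gamma_{(1)}K(\mathbf 0,y)\,\vec g(y)\,dy$ converges by \eqref{locq}, so the Taylor part integrates to an element of $(\mathcal P_s)^m$ annihilated by $\vec f$, and (b) that $\iint |\vec f(x)||K(x,y)||\vec g(y)|\mathbf 1_{(2B_I)^\complement}(y)\,dy\,dx<\infty$ by \eqref{size-s'} and \eqref{locq}, so Fubini applies to the $K$-part alone. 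Neither holds: by \eqref{size-s'} these integrands are only $O(|y|^{-n-|\gamma|}|\vec g(y)|)$ with $|\gamma|\le s$, whereas \eqref{locq} controls only $|\vec g(y)|/|y|^{n+s+\delta}$, and elements of $\mathcal L_{p,q,s,W}$ may grow polynomially at infinity (any polynomial of degree $\le s$ has norm zero), so both separate $y$-integrals genuinely diverge in general. The cancellation must be exploited in the $x$-variable, not the $y$-variable: for each fixed $y\notin 2B_1$, use the vanishing moments of $\vec f$ (each $x$-integral converges because $\vec f\in (L^{q'})^m$ has compact support) to insert the Taylor correction into $\int K(x,y)\vec f(x)\,dx$; then Definition \ref{skernel}(iv) makes the combined kernel $O(|x|^{s+\delta}|y|^{-n-s-\delta})$ on $\operatorname{supp}\vec f\times(2B_1)^\complement$, which together with the tail estimate \eqref{dtwhs} yields absolute convergence of the double integral, and only after that may one interchange the order of integration and recognize, on $B_1$, the formula of Definition \ref{mcz}. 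This is precisely the paper's route, and without it your identity for the far piece is an equality between expressions that need not be defined.

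Two smaller points. First, in the reduction you need $\operatorname{supp}\vec f\subset B_1$ (the first ball of the re-chosen sequence), not merely $\operatorname{supp}\vec f\subset 2B_I$: otherwise the middle subtraction $-\int_{2B_i\setminus 2B_1}\cdots$ in Definition \ref{mcz} is a different polynomial on each annulus $B_i\setminus B_{i-1}$ meeting $\operatorname{supp}\vec f$, hence only piecewise polynomial there and not killed by the moment conditions; with $\operatorname{supp}\vec f\subset B_1$ this term is simply absent. Second, the opening claim that $\int\vec f\,\widetilde T(\vec g)$ is absolutely convergent does not follow from the a.e.\ finiteness in Theorem \ref{sczldy} alone; it comes out of the same quantitative estimates (\eqref{xz} and the kernel bounds) that justify the Fubini step above.
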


\begin{proof}
Assume that $\vec f\in [L^{q'}_0(B({\bf0},t_1))]^m$
with $t_1\in(0,\infty)$
and fix $\vec g\in\mathcal L_{p,q,s,W}$.
Choose a sequence $\{t_j\}_{j\in\mathbb{N}\cap[2,\infty)}$ of $(0,\infty)$
satisfying that, for any $j\in\mathbb{N}$, $t_{j}\le t_{j+1}$
and $\lim_{j\to\infty}t_j=\infty$.
Define $B_1:=B({\bf0},t_1)$.
Then, using the definition of $\mathcal L_{p,q,s,W}$,
we conclude that $\vec g{\bf 1}_{2B_1}\in (L^q)^m$, which, together
with Remark \ref{lq}(ii), further implies that
\begin{align}\label{fgduiou-e1}
\int_{\mathbb{R}^n}T^*\left(\vec f\right)(y)\vec g(y)\,dy
&=\int_{2B_1}T^*\left(\vec f\right)(y)\vec g(y)\,dy
+\int_{\mathbb{R}^n\setminus 2B_1}T^*
\left(\vec f\right)(y)\vec g(y)\,dy\nonumber\\
&=\int_{\mathbb{R}^n}\vec f(y)T\left(\vec g{\bf1}_{2B_1}\right)(y)\,dy\nonumber\\
&\quad+\int_{\mathbb{R}^n\setminus 2B_1}\vec g(y)
\int_{\mathbb{R}^n}K(x,y)\vec f(x)\,dx\,dy.
\end{align}
By the fact that $\mathrm{\,supp\,}(\vec f)\subset B_1$, Definition \ref{skernel}(iv),
Tonelli's theorem, the H\"older inequality, and \eqref{dtwhs},
we find that
\begin{align*}
&\int_{\mathbb{R}^n\setminus 2B_1}\left|\vec g(y)\right|
\int_{\mathbb{R}^n}\left|K(x,y)-\sum_{\{\gamma\in\mathbb Z^n_+:  |\gamma|\leq s\}}
\frac{\partial^\gamma_{(1)}K({\bf0},y)}{\gamma!}x^\gamma\right|\left|\vec f(x)\right|\,dx\,dy\\
&\quad=\int_{\mathbb{R}^n\setminus 2B_1}\left|\vec g(y)\right|
\int_{\mathbb{R}^n}\left|\sum_{\{\gamma\in\mathbb Z^n_+:  |\gamma|=s\}}
\frac{\partial^\gamma_{(1)}K({\xi_x},y)-\partial^\gamma_{(1)}K({\bf0},y)}{\gamma!}x^\gamma\right|
\left|\vec f(x)\right|\,dx\,dy\\
&\quad\lesssim\int_{\mathbb{R}^n\setminus 2B_1}\left|\vec g(y)\right|
\int_{B_1}\frac{|x|^{s+\delta}}{|y|^{n+s+\delta}}\left|\vec f(x)\right|\,dx\,dy\\
&\quad\le t_1^{s+\delta}\int_{\mathbb{R}^n\setminus 2B_1}
\frac{|\vec g(y)|}{|y|^{n+s+\delta}}\,dy
\int_{B_1}\left|\vec f(x)\right|\,dx\\
&\quad\le t_1^{s+\delta}\left|B_1\right|^{1-\frac{1}{q}}\left\|\vec f\right\|_{L^{q'}(B_1)}
\int_{\mathbb{R}^n\setminus 2B_1}\frac{|\vec g(y)|}{|y|^{n+s+\delta}}\,dy
<\infty,
\end{align*}
where $\xi_x=\theta x$ with $\theta\in [0,1]$.
From this, $\vec f\in (L^{q'}_s)^m$, and Fubini's theorem,
we deduce that
\begin{align*}
&\int_{\mathbb{R}^n\setminus 2B_1}\vec g(y)
\int_{\mathbb{R}^n}K(x,y)\vec f(x)\,dx\,dy\\
&\quad=\int_{\mathbb{R}^n\setminus 2B_1}\vec g(y)
\int_{\mathbb{R}^n}\left[K(x,y)-\sum_{\{\gamma\in\mathbb Z^n_+:  |\gamma|\leq s\}}
\frac{\partial^\gamma_{(1)}K({\bf0},y)}{\gamma!}x^\gamma\right]\vec f(x)\,dx\,dy\\
&\quad=\int_{\mathbb{R}^n}\vec f(x)
\left\{\int_{\mathbb{R}^n\setminus 2B_1}
\left[K(x,y)-\sum_{\{\gamma\in\mathbb Z^n_+:  |\gamma|\leq s\}}
\frac{\partial^\gamma_{(1)}K({\bf0},y)}{\gamma!}x^\gamma\right]\vec g(y)\,dy\right\}\,dx.
\end{align*}
Combining this, \eqref{fgduiou-e1}, Definition \ref{mcz}, and Theorem \ref{sczldy},
we obtain
\begin{align*}
\int_{\mathbb{R}^n}T^*\left(\vec f\right)(x)\vec g(x)\,dx
=\int_{\mathbb{R}^n}\vec f(x)
T_{\{t_i\}_{i\in\mathbb{N}}}\left(\vec g\right)(x)\,dx
=\int_{\mathbb{R}^n}\vec f(x)\widetilde{T}\left(\vec g\right)(x)\,dx,
\end{align*}
which, together with the arbitrariness of $t_1$, $\vec f$, and $\vec g$,
completes the proof of Lemma \ref{fgduiou}.
\end{proof}

Recall that, for any $s\in\mathbb Z_+$, any $p\in(1,\infty]$, and any
compact set $E\subset\mathbb R^n$,
\begin{align*}
L^p_{s}(E):=\left\{\vec f\in L^p(E):  \Pi^s_{E}\left(
\vec f\right)=\vec 0\right\}.
\end{align*}

\begin{lemma}\label{2501132155}
Let $s\in\mathbb Z_+$, $\delta\in(0,1]$, $K$ be an $(s,\delta)$-standard kernel as in
Definition \ref{skernel},
$T$ an $(s,\delta)$-Calder\'on--Zygmund operator associated with the kernel $K$,
and $\widetilde{T}$ the corresponding $(s,\delta)$-modified
one as in Definition \ref{mcz}.
Then, for any $\gamma\in\mathbb Z_+^n$ with $|\gamma|\leq s$,
$\widetilde{T}(x^\gamma)\in\mathcal{P}_s$
if and only if $T(x^\gamma)=0$.
\end{lemma}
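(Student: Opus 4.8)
The plan is to recast both conditions as statements about the pairing of $\widetilde T(x^\gamma)$ against compactly supported functions in $L^2_s$, and to connect the two sides through the duality identity of Lemma \ref{fgduiou}. First I would observe that, since $|\gamma|\le s$, the polynomial $x^\gamma$ satisfies the growth condition \eqref{locq} (indeed $\int_{|y|\ge 2t_1}|y|^{|\gamma|-n-s-\delta}\,dy\lesssim\int_{|y|\ge 2t_1}|y|^{-n-\delta}\,dy<\infty$), so by Proposition \ref{mzcwell}(i) the function $\widetilde T(x^\gamma)$ is finite almost everywhere; moreover, inspecting Definition \ref{mcz} shows $\widetilde T(x^\gamma)\in L^2_{\rm loc}$ (the term $T(\mathbf 1_{2B_i}(\cdot)^\gamma)$ lies in $L^2$ and the remaining tail terms are locally bounded), and Proposition \ref{mzcwell}(iii) shows the coset of $\widetilde T(x^\gamma)$ modulo $\mathcal P_s$ does not depend on $\{t_i\}_{i\in\mathbb N}$. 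Then I would invoke Lemma \ref{fgduiou} with $p:=1$, $q:=2$, the constant matrix weight $W\equiv I_m$ (which belongs to $A_1$), and $\vec g:=x^\gamma\vec e_1\in(\mathcal P_s)^m\subset\mathcal L_{1,2,s,W}$, the last membership by Remark \ref{re74}(ii); since $\widetilde T$ acts componentwise, this gives, for every compactly supported $f\in L^2_s$,
\begin{align*}
\int_{\mathbb R^n}T^*(f)(x)\,x^\gamma\,dx=\int_{\mathbb R^n}f(x)\,\widetilde T(x^\gamma)(x)\,dx.
\end{align*}

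Second, I would record the elementary $L^2$-duality already used in the proof of Theorem \ref{dual}(ii): for each cube $Q$, a function $h\in L^2(Q)$ satisfies $\int_Q h(x)f(x)\,dx=0$ for every $f\in L^2_s(Q)$ if and only if $h|_Q\in\mathcal P_s|_Q$. This follows from \eqref{fy}, which shows $\mathcal P_s|_Q\perp L^2_s(Q)$ in $L^2(Q)$, together with the dimension count that makes $\mathcal P_s|_Q$ the full orthogonal complement of $L^2_s(Q)$. Applying this with $h:=\widetilde T(x^\gamma)$, letting $Q$ exhaust $\mathbb R^n$, and noting that the resulting local polynomials agree on overlaps and hence patch to a single element of $\mathcal P_s$, I would conclude: $\widetilde T(x^\gamma)\in\mathcal P_s$ if and only if $\int_{\mathbb R^n}f(x)\,\widetilde T(x^\gamma)(x)\,dx=0$ for every compactly supported $f\in L^2_s$.

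The equivalence is then a short deduction from these two facts. If $T(x^\gamma)=0$, then testing the vanishing condition against any compactly supported $f\in L^2_s$ -- which in particular annihilates $x^\gamma$, and for which the regularity estimate $|T^*(f)(x)|\lesssim|x|^{-n-s-\delta}$ at infinity guarantees absolute convergence of the integral -- yields $\int_{\mathbb R^n}T^*(f)(x)\,x^\gamma\,dx=0$; by the displayed identity the right-hand side $\int_{\mathbb R^n}f(x)\,\widetilde T(x^\gamma)(x)\,dx$ vanishes for all such $f$, so $\widetilde T(x^\gamma)\in\mathcal P_s$. Conversely, if $\widetilde T(x^\gamma)\in\mathcal P_s$, then for every compactly supported $f\in L^2_s$ the right-hand side of the displayed identity is the integral of a polynomial of degree at most $s$ against a function with vanishing moments up to order $s$, hence equals $0$; therefore $\int_{\mathbb R^n}T^*(f)(x)\,x^\gamma\,dx=0$ for all such $f$, which is exactly the assertion $T(x^\gamma)=0$.

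The step I expect to require the most care is the bookkeeping around the test classes: I must check that the vanishing condition $T(x^\gamma)=0$ is equivalently testable on the compactly supported $f\in L^2_s$ handled by Lemma \ref{fgduiou}, and that the moment hypotheses on such $f$ are exactly what is needed both for $\int_{\mathbb R^n}T^*(f)(x)\,x^\gamma\,dx$ to converge absolutely and for the pairing to be reorganized against $\widetilde T(x^\gamma)$. All the genuine analytic content is supplied by Lemma \ref{fgduiou}; the rest is the $L^2$-duality for $L^2_s(Q)$ and the globalization argument already appearing in the proof of Theorem \ref{dual}.
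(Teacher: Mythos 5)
Your proposal is correct and follows essentially the same route as the paper: both directions hinge on the duality identity of Lemma \ref{fgduiou} applied with $\vec g:=x^\gamma$ (a zero-norm element of a matrix-weighted Campanato space, so your instantiation $p=1$, $q=2$, $W\equiv I_m$ is just a harmless concretization of what the paper leaves implicit), combined with the local $L^2$ projection argument ($h-\Pi^s_B h\in L^2_s(B)$, then arbitrariness of $h$ and of the ball) to convert membership of $\widetilde T(x^\gamma)$ in $\mathcal P_s$ into the vanishing of pairings against compactly supported elements of $L^2_s$. Your preliminary checks (\eqref{locq} for $x^\gamma$, local square-integrability of $\widetilde T(x^\gamma)$) are fine and only make explicit what the paper treats implicitly.
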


\begin{proof}
We first show the necessity.
Assume that, for any $\gamma\in\mathbb Z_+^n$ with $|\gamma|\leq s$,
$\widetilde{T}(x^\gamma)\in\mathcal{P}_s$.
Using this and Lemma \ref{fgduiou},
we conclude that, for any $a\in L^2_s(E)$ with
compact support $E\subset\mathbb R^n$,
\begin{align*}
\int_{\mathbb{R}^n}T^*(a)(x)x^\gamma\,dx
=\int_{\mathbb{R}^n}\widetilde{T}(\cdot^\gamma)(x)a(x)\,dx
=0,
\end{align*}
which further implies that $T(x^\gamma)=0$.
This finishes the proof of the necessity.

Now, we show the sufficiency.
To this end, fix $\gamma\in\mathbb Z_+^n$ with $|\gamma|\leq s$, assume that
$T(x^\gamma)=0$, and choose a
ball $B\subset\mathbb{R}^n$ and a function $h\in L^2(B)$.
Therefore, $h-\Pi^s_B(h)\in L^2_s(B)$.
Combining this, Lemma \ref{fgduiou},
and the fact that
$T(x^\gamma)=0$, we find that
\begin{align*}
&\int_{\mathbb{R}^n}\left[\widetilde{T}(\cdot^\gamma)(x)-
\Pi_B^s\left(\widetilde{T}(\cdot^\gamma)\right)\right]h(x)\,dx\\
&\quad=\int_{\mathbb{R}^n}\left[\widetilde{T}(\cdot^\gamma)(x)-
\Pi_B^s\left(\widetilde{T}(\cdot^\gamma)\right)\right]
\left[h(x)-\Pi^s_B(h)(x)\right]\,dx\\
&\quad=\int_{\mathbb{R}^n}\widetilde{T}(\cdot^\gamma)(x)\left[
h(x)-\Pi^s_B(h)(x)\right]\,dx\\
&\quad=\int_{\mathbb{R}^n}T^*\left(h-\Pi^s_B(h)\right)(x)x^\gamma\,dx
=0.
\end{align*}
From this and the arbitrariness of $h\in L^2(B)$,
it follows that $\widetilde{T}(x^\gamma)=
\Pi^s_B(\widetilde{T}(x^\gamma))$ almost everywhere in $B$.
Applying this and the arbitrariness of the ball $B$,
we conclude that $\widetilde{T}(x^\gamma)\in\mathcal{P}_s$.
This finishes the proof of the sufficiency and hence
Lemma \ref{2501132155}.
\end{proof}

Next, we prove Theorem \ref{btbbmc}.

\begin{proof}[Proof of Theorem \ref{btbbmc}]
We first show the necessity.
Assume that $\widetilde{T}$ is bounded on
$\mathcal L_{p,q,s,W}$.
Then, by this, we find that, for any $\vec q\in(\mathcal{P}_s)^m$,
$$\left\|\widetilde{T}(\vec q)\right\|_{\mathcal L_{p,q,s,W}}
\lesssim\|\vec q\|_{\mathcal L_{p,q,s,W}}=0,$$
which, together with Remark \ref{re74}(ii), further implies that $\widetilde{T}(\vec q)\in(\mathcal{P}_s)^m$.
From this and Lemma \ref{2501132155}, we infer that $T(\vec q)=\vec 0$,
which completes the proof of the necessity.

Now, we show the sufficiency. That is, we assume
that, for any $\gamma\in\mathbb Z_+^n$ with $|\gamma|\leq s$,
$T(x^\gamma)=0$. By Lemma \ref{2501132155}, we conclude that,
for any $\gamma\in\mathbb Z_+^n$ with $|\gamma|\leq s$,
$\widetilde{T}(x^\gamma)\in\mathcal{P}_s$.
Let $\vec f\in\mathcal L_{p,q,s,W}$, $c:=2\sqrt n$,
$z\in\mathbb{R}^n$, $t_1\in(0,\infty)$, and the cube $Q$ be
centered at $z$ with length $t\in(0,\infty)$
satisfying $Q\subset B({\bf0},t_1)=:B_1$.
For any $x\in Q$, define
\begin{align*}
E\left(\vec f,z,t\right)(x):=&\,\int_{\mathbb{R}^n\setminus  cQ}
\left[K(x,y)-\sum_{\{\gamma\in\mathbb Z^n_+:  |\gamma|\leq s\}}
\frac{\partial^\gamma_{(1)}K(z,y)}{\gamma!}(x-z)^\gamma\right]\\
&\quad\times\left[\vec f(y)-\Pi_{ cQ}^s\left(\vec f\right)\right]\,dy
\end{align*}
and
\begin{align*}
F\left(\vec f,z,t\right)(x):=&\,\int_{\mathbb{R}^n}\left[
\sum_{\{\gamma\in\mathbb Z^n_+:  |\gamma|\leq s\}}
\frac{\partial^\gamma_{(1)}K(z,y)}{\gamma!}(x-z)^\gamma
{\bf1}_{\mathbb{R}^n\setminus cQ}(y)\right.\\
&\quad\left.-\sum_{\{\gamma\in\mathbb Z^n_+:  |\gamma|\leq s\}}
\frac{\partial^\gamma_{(1)}K({\bf0},y)}{\gamma!}x^\gamma
{\bf1}_{\mathbb{R}^n\setminus cB_1}(y)\right]\left[
\vec f(y)-\Pi_{ cQ}^s\left(\vec f\right)\right]\,dy.
\end{align*}
Next, we claim that $E(\vec f,z,t)$ and $F(\vec f,z,t)$ are well-defined.
Indeed, using Definition \ref{skernel}(iv),
the fact that, for any $x\in Q$ and
$y\in\mathbb{R}^n\setminus  cQ$, $|z-y|\ge \sqrt nt>2|z-x|$,
and \eqref{xz},
we find that, for any $x\in Q$,
\begin{align}\label{btbbmc-eq3}
\left|E(\vec f,z,t)(x)\right|
&\le\int_{\mathbb{R}^n\setminus  cQ}
\left|K(x,y)-\sum_{\{\gamma\in\mathbb Z^n_+:  |\gamma|\leq s\}}
\frac{\partial^\gamma_{(1)}K(z,y)}{\gamma!}(x-z)^\gamma
\right|\left|\vec f(y)-\Pi_{ cQ}^s\left(\vec f\right)\right|\,dy\nonumber\\
&=\int_{\mathbb{R}^n\setminus  cQ}
\left|\sum_{\{\gamma\in\mathbb Z^n_+:  |\gamma|=s\}}
\frac{\partial^\gamma_{(1)}K({\xi_x},y)-
\partial^\gamma_{(1)}K(z,y)}{\gamma!}(x-z)^\gamma\right|\nonumber\\
&\quad\times\left|\vec f(y)-\Pi_{ cQ}^s\left(\vec f\right)\right|\,dy\nonumber\\
&\lesssim\int_{\mathbb{R}^n\setminus  cQ}
|x-z|^{s+\delta}\frac{|\vec f(y)-\Pi_{ cQ}^s(\vec f)|}{|y-z|^{n+s+\delta}}\,dy\nonumber\\
&\lesssim t^{n(\frac 1{p}-1)}\left\|A_Q\right\|
\left\|\vec f\right\|_{\mathcal L_{p,q,s,W}}<\infty,
\end{align}
where $\xi_x=\theta x+(1-\theta)z$ with $\theta\in [0,1]$,
which implies that $E(\vec f,z,t)$ is well-defined.
On the other hand, from the assumption that
$Q\subset B_1$, it follows that,
for any $y\in \mathbb{R}^n\setminus cB_1$,
$|z|<t_1<\frac12 |y|$ and hence
$|z-y|<\frac32|y|$. Combining this,
the assumption that $Q\subset B_1$,
and Definition \ref{skernel}(iv), we conclude that,
for any $x\in Q$ and $y\in \mathbb{R}^n\setminus cB_1$,
\begin{align}\label{btbbmc-eq1}
&\left|\sum_{\{\gamma\in\mathbb Z^n_+:  |\gamma|\leq s\}}
\frac{\partial^\gamma_{(1)}K(z,y)}{\gamma!}(x-z)^\gamma
{\bf1}_{\mathbb{R}^n\setminus cQ}(y)
-\sum_{\{\gamma\in\mathbb Z^n_+:  |\gamma|\leq s\}}
\frac{\partial^\gamma_{(1)}K({\bf0},y)}{\gamma!}x^\gamma\right|\nonumber\\
&\quad\leq\left|\sum_{\{\gamma\in\mathbb Z^n_+:  |\gamma|\leq s\}}
\frac{\partial^\gamma_{(1)}K(z,y)}{\gamma!}(x-z)^\gamma
-K(x,y)\right|\nonumber\\
&\qquad+\left|K(x,y)
-\sum_{\{\gamma\in\mathbb Z^n_+:  |\gamma|\leq s\}}
\frac{\partial^\gamma_{(1)}K({\bf0},y)}{\gamma!}x^\gamma\right|\nonumber\\
&\quad=\left|\sum_{\{\gamma\in\mathbb Z^n_+:  |\gamma|=s\}}
\frac{\partial^\gamma_{(1)}K({\xi_{x,1}},y)
-\partial^\gamma_{(1)}K(z,y)}{\gamma!}(x-z)^\gamma\right|\nonumber\\
&\quad\quad+\left|\sum_{\{\gamma\in\mathbb Z^n_+:  |\gamma|=s\}}
\frac{\partial^\gamma_{(1)}K({\xi_{x,2}},y)
-\partial^\gamma_{(1)}K({\bf0},y)}{\gamma!}x^\gamma\right|\nonumber\\
&\quad\lesssim\frac{|x-z|^{s+\delta}}{|y-z|^{n+s+\delta}}+
\frac{|x|^{s+\delta}}{|y|^{n+s+\delta}}
\sim\frac{|x-z|^{s+\delta}+|x|^{s+\delta}}{|y-z|^{n+s+\delta}}
\lesssim\frac{t_1^{s+\delta}}{|y-z|^{n+s+\delta}},
\end{align}
where $\xi_{x,1}=\theta_1 x+(1-\theta_1)z$ and
$\xi_{x,2}=\theta_2 x+(1-\theta_2)z$
with $\theta_i\in [0,1]$ for $i\in\{1,2\}$.
In addition, by Definition \ref{skernel}(i),
we obtain, for any $x\in Q$ and $y\in cB_{1}\setminus  cQ$,
\begin{align}\label{429}
&\left|\sum_{\{\gamma\in\mathbb Z^n_+:  |\gamma|\leq s\}}
\frac{\partial^\gamma_{(1)}K(z,y)}{\gamma!}(x-z)^\gamma
{\bf1}_{\mathbb{R}^n\setminus cQ}(y)
-\sum_{\{\gamma\in\mathbb Z^n_+:  |\gamma|\leq s\}}
\frac{\partial^\gamma_{(1)}K({\bf0},y)}{\gamma!}x^\gamma
{\bf1}_{\mathbb{R}^n\setminus cB_1}(y)\right|\nonumber\\
&\quad=\left|\sum_{\{\gamma\in\mathbb Z^n_+:  |\gamma|\leq s\}}
\frac{\partial^\gamma_{(1)}K(z,y)}{\gamma!}(x-z)^\gamma
\right|\nonumber\\
&\quad\lesssim\sum_{\{\gamma\in\mathbb Z^n_+:  |\gamma|\leq s\}}
\frac{|x-z|^{|\gamma|}}{|z-y|^{n+|\gamma|}}\lesssim t^{-n}.
\end{align}
Since $\vec f\in (L^q_{\rm{loc}})^m\subset (L^1_{\rm{loc}})^m$, we may
assume that, for any $x\in\mathbb R^n$ and $i\in\{1,\ldots,m\}$,
$$\left[\Pi^s_{Q}\left(\vec f\right)\right]_i(x):=\sum_{\{\gamma\in\mathbb Z^n_+:  |\gamma|\leq s\}}
c_{\gamma,i} x^\gamma.$$
Applying this, \eqref{btbbmc-eq1}, \eqref{429}, the assumption that $Q\subset B_1$ again,
\eqref{xz}, and the H\"older inequality,
we conclude that $\vec f\in (L^q_{\rm loc})^m$ and hence
\begin{align*}
\left|F\left(\vec f,z,t\right)(x)\right|
&\le\int_{\mathbb{R}^n}\left|\sum_{\{\gamma\in\mathbb Z^n_+:  |\gamma|\leq s\}}
\frac{\partial^\gamma_{(1)}K(z,y)}{\gamma!}(x-z)^\gamma
{\bf1}_{\mathbb{R}^n\setminus cQ}(y)\right.\nonumber\\
&\quad\left.-\sum_{\{\gamma\in\mathbb Z^n_+:  |\gamma|\leq s\}}
\frac{\partial^\gamma_{(1)}K({\bf0},y)}{\gamma!}x^\gamma
{\bf1}_{\mathbb{R}^n\setminus cB_1}(y)\right|
\left|\vec f(y)-\Pi_{ cQ}^s\left(\vec f\right)\right|\,dy\nonumber\\
&\lesssim
t_1^{s+\delta}\int_{\mathbb{R}^n
\setminus  cB_1}\frac{|\vec f(y)-\Pi_{ cQ}^s(\vec f)|}{|y-z|^{n+s+\delta}}
+t^{-n}\int_{ cB_1\setminus  cQ}\left|\vec f(y)-\Pi_{ cQ}^s\left(\vec f\right)\right|\,dy\nonumber\\
&\le t_1^{s+\delta}t^{-s-\delta-n(1-\frac 1{p})}\left\|A_Q\right\|
\left\|\vec f\right\|_{\mathcal L_{p,q,s,W}}\\
&\quad+t^{-n}\left[\int_{ cB_1}\left|\vec f(y)\right|\,dy+|cB_1|
\sum_{i=1}^{m}\sum_{\{\gamma\in\mathbb Z^n_+:  |\gamma|\leq s\}}
|c_{\gamma,i}| t_1^{|\gamma|}
\right]\notag\\
&\lesssim t_1^{s+\delta}t^{-s-\delta-n(1-\frac 1{p})}\left\|A_Q\right\|
\left\|\vec f\right\|_{\mathcal L_{p,q,s,W}}
+t^{-n}|cB_1|^{1-\frac1q}\left\|\vec f\right\|_{L^q(cB_1)}\\
&\quad+t^{-n}\sum_{i=1}^{m}\sum_{\{\gamma\in\mathbb Z^n_+:  |\gamma|\leq s\}}
|c_{\gamma,i}| t_1^{n+|\gamma|}\\&<\infty.
\end{align*}
Therefore, $F(\vec f,z,t)$ is also well-defined.
This, together with \eqref{btbbmc-eq3},
finishes the proof of the above claim.

Now, choose a sequence $\{t_i\}_{i\in\mathbb{N}\cap[2,\infty)}$
of $(0,\infty)$ such that, for any $i\in\mathbb{N}$,
$t_i\le t_{i+1}$ and $\lim_{i\to\infty}t_i=\infty$.
Let $T_{\{\sqrt nt_i\}_{i\in\mathbb{N}}}$ be an $(s,\delta)$-modified
Calder\'on--Zygmund operator as in Definition \ref{mcz}.
By this and the definitions of both $E(f,z,t)$ and $F(f,z,t)$,
we obtain, for any $x\in Q$,
\begin{align}\label{btbbmc-eq4}
T_{\{\sqrt nt_i\}_{i\in\mathbb{N}}}(\vec f)(x)
&=T\left(\left[\vec f-\Pi_{ cQ}^s\left(\vec f\right)\right]{\bf1}_{ cQ}\right)(x)
+E\left(\vec f,z,t\right)(x)+F\left(\vec f,z,t\right)(x)\notag\\
&\quad+T_{\{\sqrt nt_i\}_{i\in\mathbb{N}}}
\left(\Pi_{ cQ}^s\left(\vec f\right)\right)(x)\nonumber\\&\quad-
\int_{\mathbb{R}^n\setminus cB_1}\sum_{\{\gamma\in\mathbb Z^n_+:  |\gamma|\leq s\}}
\frac{\partial^\gamma_{(1)}K({\bf0},y)}{\gamma!}x^\gamma\vec f(y)\,dy.
\end{align}
Since, for any $\gamma\in\mathbb Z_+^n$ with $|\gamma|\leq s$,
$T(x^\gamma)=0$, from Lemma \ref{2501132155},
it follows that $T_{\{\sqrt nt_i\}_{i\in\mathbb{N}}}(x^\gamma)
\in\mathcal{P}_s$.
Combining this,
Theorem \ref{sczldy}(ii), \eqref{btbbmc-eq4},
and Proposition \ref{sczbouned},
we find that
\begin{align}\label{250361955}
&\inf_{\vec P\in(\mathcal{P}_s)^m}
\left\|A_Q^{-1}\left[\widetilde{T}\left(\vec f\right)-\vec P\right]\right\|_{L^q(Q)}\nonumber\\
&\quad=\inf_{\vec P\in(\mathcal{P}_s)^m}\left\|A_Q^{-1}\left[T_{\{\sqrt nt_i\}_{i\in\mathbb{N}}}
\left(\vec f\right)-\vec P\right]\right\|_{L^q(Q)}\notag\\
&\quad\le\left\|A_Q^{-1}\left[T\left(\left[\vec f-\Pi_{ cQ}^s\left(\vec f\right)\right]
{\bf1}_{ cQ}\right)+E\left(\vec f,z,t\right)\right]\right\|_{L^q(Q)}\notag\\
&\quad\le\left\|T\left(A_Q^{-1}\left[\vec f-\Pi_{ cQ}^s\left(\vec f\right)\right]
{\bf1}_{ cQ}\right)\right\|_{L^q(Q)}
+\left\|A_Q^{-1}E\left(\vec f,z,t\right)\right\|_{L^q(Q)}\notag\\
&\quad\lesssim\left\|A_Q^{-1}
\left[\vec f-\Pi_{ cQ}^s\left(\vec f\right)\right]\right\|_{L^q( cQ)}
+\left\|A_Q^{-1}E\left(\vec f,z,t\right)\right\|_{L^q(Q)}.
\end{align}
To estimate the second term in \eqref{250361955}, using Definition \ref{skernel}(iv),
\cite[Lemma 2.20]{jtyyz22i}, Lemma \ref{p01}, and the definition of $\mathcal L_{p,q,s,W}$,
we conclude that
\begin{align*}
&\int_{Q}\left|A_Q^{-1}E(\vec f,z,t)(x)\right|^q\,dx\nonumber\\
&\quad\le\int_Q\left\{\int_{\mathbb{R}^n\setminus  cQ}
\left|K(x,y)-\sum_{\{\gamma\in\mathbb Z^n_+:  |\gamma|\leq s\}}
\frac{\partial^\gamma_{(1)}K(z,y)}{\gamma!}(x-z)^\gamma\right|\right.\\
&\qquad\times\left|A_Q^{-1}
\left[\vec f(y)-\Pi_{ cQ}^s\left(\vec f\right)\right]\right|\,dy\Bigg\}^q\,dx\nonumber\\
&\quad=\int_Q\left\{\int_{\mathbb{R}^n\setminus  cQ}
\left|\sum_{\{\gamma\in\mathbb Z^n_+:  |\gamma|=s\}}
\frac{\partial^\gamma_{(1)}K({\xi_x},y)-
\partial^\gamma_{(1)}K(z,y)}{\gamma!}(x-z)^\gamma\right|\right.\\
&\qquad\times\left|A_Q^{-1}
\left[\vec f(y)-\Pi_{ cQ}^s\left(\vec f\right)\right]\right|\,dy\Bigg\}^q\,dx\nonumber\\
&\quad\lesssim\int_Q\left\{\int_{\mathbb{R}^n\setminus  cQ}
|x-z|^{s+\delta}\frac{|A_Q^{-1}[\vec f(y)-\Pi_{ cQ}^s(\vec f)]|}
{|y-z|^{n+s+\delta}}\,dy\right\}^q\,dx,\nonumber\\
&\quad\lesssim|Q|^{1+\frac{(s+\delta) q}n}\left[\sum_{k\in\mathbb N}(2^{k}t)^{-s-\delta}\left\|
A_Q^{-1}A_{2^kQ}\right\|\left\{\fint_{2^kQ}
\left|A_{2^kQ}^{-1}\left[\vec f(y)-\Pi_{2^kQ}^s
\left(\vec f\right)\right]\right|^q
\,dy\right\}^\frac{1}{q}\right]^q\nonumber\\
&\quad\lesssim|Q|^{1+\frac{(s+\delta) q}n}\left[\sum_{k\in\mathbb N}
(2^{k}t)^{-s-\delta}2^{k\widetilde{d}(1-\frac{1}{p})}\left|2^kQ\right|^{\frac1{p}-1}\right]^q
\left\|\vec f\right\|_{\mathcal L_{p,q,s,W}}\\&\quad\sim
|Q|^{1-q(1-\frac1p)}\left\|\vec f\right\|_{\mathcal L_{p,q,s,W}}^q,
\end{align*}
where $\widetilde{d}$ is the same as in Lemma \ref{p01} and
$\xi_x=\theta x+(1-\theta)z$ with $\theta\in [0,1]$.
From this, the definition of $\mathcal L_{p,q,s,W}$, and \eqref{250361955},
we infer that
\begin{align*}
\left\|\widetilde{T}\vec f\right\|_{\mathcal L_{p,q,s,W}}&=
\sup_{\mathrm{cube}\,Q}\inf_{\vec P\in(\mathcal{P}_s)^m}|Q|^{\frac1{q'}-\frac1p}
\left\{\int_{Q}\left|
A_Q^{-1}\left[\widetilde{T}\vec f(x)-\vec P(x)\right]
\right|^q\,dx\right\}^{\frac1q}\\
&\lesssim\sup_{\mathrm{cube}\,Q}|Q|^{\frac1{q'}-\frac1p}\left[
\left\|A_Q^{-1}\left(\vec f-\Pi_{ cQ}^s\left(\vec f\right)\right)\right\|_{L^q( cQ)}
+\left\|A_Q^{-1}E\left(\vec f,z,t\right)\right\|_{L^q(Q)}\right]\\
&\lesssim\left\|\vec f\right\|_{\mathcal L_{p,q,s,W}}.
\end{align*}
This finishes the proof of the sufficiency and hence Theorem \ref{btbbmc}.
\end{proof}

As an application of Theorems \ref{dual} and \ref{btbbmc},
we obtain the following theorem.

\begin{theorem}\label{CZ}
Let $p\in(0,1]$, $s\in[\lceil n(\frac{1}{p}-1)\rceil,\infty)\cap\mathbb Z_+$,
$\delta\in(0,1]$, and $W\in A_p$.
Let $T$ be an $(s,\delta)$-Calder\'on--Zygmund operator.
Then the following statements are equivalent:
\begin{itemize}
\item[{\rm(i)}]  for any $\gamma\in\mathbb Z_+^n$ with $|\gamma|\leq s$,
$T^*(x^\gamma)=0$;
\item[{\rm(ii)}]  there exists a  bounded operator
$T_:  H^p_W
\to  H^p_W$ that agrees with $T$ on all the
continuous $(p,\infty,s)_W$-atoms.
\end{itemize}
\end{theorem}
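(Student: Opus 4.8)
The plan is to establish both implications simultaneously by reducing the statement, via Theorem \ref{dual} and Theorem \ref{btbbmc}, to a single assertion about the modified operator $\widetilde{T^*}$ attached to the adjoint $T^*$. Throughout I would fix $q=1$, so that $q'=\infty$; since $s\ge\lceil n(\frac1p-1)\rceil\ge\lfloor n(\frac1p-1)\rfloor$, Corollary \ref{2247} shows that $\mathcal L_{p,q,s,W}$ does not depend on this choice, Theorem \ref{dual} identifies $(H^p_W)^*$ with $\mathcal L_{p,q,s,W}$ through $\vec g\mapsto L_{\vec g}$, and every continuous $(p,\infty,s)_W$-atom lies both in the finite atomic space $H^{p,\infty,s}_{W,\mathrm{fin}}\cap\mathcal C$, on which $L_{\vec g}$ is the naive integral against $\vec g$, and in $(L^{q'}_s)^m$ with compact support, which is the domain of Lemma \ref{fgduiou}. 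By Remark \ref{lq}(ii), $T^*$ is again an $(s,\delta)$-Calder\'on--Zygmund operator; its modified version $\widetilde{T^*}$ is well defined on $\mathcal L_{p,q,s,W}$ modulo $(\mathcal P_s)^m$ by Theorem \ref{sczldy}, and applying Theorem \ref{btbbmc} to $T^*$ shows that condition (i) is equivalent to the boundedness of $\widetilde{T^*}$ on $\mathcal L_{p,q,s,W}$. Hence it remains to prove that (ii) holds if and only if $\widetilde{T^*}$ is bounded on $\mathcal L_{p,q,s,W}$.

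The implication (i)$\Rightarrow$(ii) is the sufficiency already contained in \cite{bcyy}, and I would only recall the reason: if $T^*(x^\gamma)=0$ for all $|\gamma|\le s$, then any continuous $(p,\infty,s)_W$-atom $\vec a$ supported in a cube $Q$ is sent by $T$ to a function with $\int_{\mathbb R^n}T\vec a(x)x^\gamma\,dx=\int_{\mathbb R^n}\vec a(x)T^*(x^\gamma)(x)\,dx=0$ for $|\gamma|\le s$ which, by Proposition \ref{sczbouned} and the standard-kernel estimates, lies in every $L^{\widetilde q}$ with $\widetilde q\in(1,\infty)$ and has the molecular size decay $|x-c_Q|^{-(n+s+\delta)}$ off $2\sqrt n\,Q$; thus $T\vec a$ is a uniformly bounded multiple of a molecule adapted to $Q$, whence $\|T\vec a\|_{H^p_W}\lesssim1$ by the molecular part of the atomic theory of $H^p_W$ in \cite{bcyy}, and the desired bounded extension $T_{\mathrm{op}}$ on $H^p_W$ (agreeing with $T$ on continuous atoms) follows from the $p$-subadditivity of $\|\cdot\|_{H^p_W}$, Lemma \ref{finatom}, and the density of the continuous finite atoms in $H^p_W$.

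The new content is (ii)$\Rightarrow$(i). Given the bounded operator $T_{\mathrm{op}}:H^p_W\to H^p_W$ that agrees with $T$ on continuous atoms, I would pass to its Banach-space adjoint $(T_{\mathrm{op}})^\dagger$, a bounded operator on $(H^p_W)^*=\mathcal L_{p,q,s,W}$. Fixing $\vec g\in\mathcal L_{p,q,s,W}$ and testing against an arbitrary continuous $(p,\infty,s)_W$-atom $\vec a$,
\begin{align*}
\big((T_{\mathrm{op}})^\dagger L_{\vec g}\big)(\vec a)
&=L_{\vec g}\big(T_{\mathrm{op}}\vec a\big)=L_{\vec g}\big(T\vec a\big)\\
&=\int_{\mathbb R^n}T\vec a(x)\vec g(x)\,dx=\int_{\mathbb R^n}\vec a(x)\,\widetilde{T^*}(\vec g)(x)\,dx,
\end{align*}
the last equality being Lemma \ref{fgduiou} applied to $T^*$. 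By Theorem \ref{dual}(ii) there is a unique $\vec h\in\mathcal L_{p,q,s,W}$ with $(T_{\mathrm{op}})^\dagger L_{\vec g}=L_{\vec h}$ and $\|\vec h\|_{\mathcal L_{p,q,s,W}}\lesssim\|T_{\mathrm{op}}\|\,\|\vec g\|_{\mathcal L_{p,q,s,W}}$, so the display yields $\int_{\mathbb R^n}\vec a(x)[\vec h(x)-\widetilde{T^*}(\vec g)(x)]\,dx=0$ for every continuous $(p,\infty,s)_W$-atom $\vec a$. Since every $\mathbb C^m$-valued $\vec\phi\in C^\infty$ with compact support and vanishing moments up to order $s$ is, after multiplication by a positive constant depending only on $\operatorname{supp}\vec\phi$ and $W$, such an atom, a standard argument forces $\vec h-\widetilde{T^*}(\vec g)$ to agree a.e. on every ball with an element of $(\mathcal P_s)^m$, hence to belong to $(\mathcal P_s)^m$. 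By Remark \ref{re74}(ii), $\|\widetilde{T^*}(\vec g)\|_{\mathcal L_{p,q,s,W}}=\|\vec h\|_{\mathcal L_{p,q,s,W}}\lesssim\|T_{\mathrm{op}}\|\,\|\vec g\|_{\mathcal L_{p,q,s,W}}$; as $\vec g$ was arbitrary, $\widetilde{T^*}$ is bounded on $\mathcal L_{p,q,s,W}$, and Theorem \ref{btbbmc} applied to $T^*$ gives (i).

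I expect the main obstacle to be the third equality in the display above, namely $L_{\vec g}(T\vec a)=\int_{\mathbb R^n}T\vec a(x)\vec g(x)\,dx$: the functional $L_{\vec g}$ is given by this integral only on the finite atomic subspace, so one must verify that its unique bounded extension to $H^p_W$ still coincides with the Lebesgue integral at the concrete function $T\vec a$, which belongs to $H^p_W$ but not to that subspace. I would handle this by approximating $T\vec a$ in $H^p_W$ by the partial sums of an atomic decomposition and controlling the resulting tail pairings against $\vec g$ by the growth estimate \eqref{xz}, using that each atom in the decomposition annihilates $(\mathcal P_s)^m$ so that only $\vec g-\Pi^s_Q\vec g$ contributes. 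The remaining points---the passage from the annihilation of every continuous atom to membership in $(\mathcal P_s)^m$, and, for (i)$\Rightarrow$(ii), the molecular inclusion into $H^p_W$---are routine or already available in \cite{bcyy}.
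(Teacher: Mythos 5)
Your argument is correct and essentially identical to the paper's: the implication (i)$\Rightarrow$(ii) is delegated to \cite{bcyy} in both, and for (ii)$\Rightarrow$(i) both you and the paper use Lemma \ref{fgduiou} applied to $T^*$, the duality of Theorem \ref{dual} together with Lemma \ref{finatom} and the density of continuous finite atomic elements, to conclude that $\widetilde{T^*}$ is bounded on $\mathcal L_{p,q,s,W}$, and then invoke Theorem \ref{btbbmc} for $T^*$. The differences are cosmetic—you route the duality step through the Banach adjoint $(T_{\mathrm{op}})^\dagger$ and the uniqueness of the representing Campanato function, whereas the paper writes $\|\widetilde{T^*}(\vec g)\|_{\mathcal L_{p,q,s,W}}$ directly as a supremum of pairings with normalized continuous finite atoms—and the identification $L_{\vec g}(T\vec a)=\int_{\mathbb R^n}T\vec a(x)\vec g(x)\,dx$ that you single out as the main obstacle is indeed used implicitly, without further elaboration, in the paper's proof as well, so your proposed tail-estimate justification via \eqref{xz} only adds rigor rather than changing the approach.
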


\begin{proof}
The necessity can be deduced from \cite[Theorem 5.5]{bcyy}.
Next, we show the sufficiency.
For this purpose, let
$T^*$ be the adjoint operator of $T$ as in Remark \ref{lq}(ii)
and $\widetilde{T^*}$ the corresponding $(s,\delta)$-modified
Calder\'on--Zygmund operator
as in Definition \ref{mcz}.
Using the proof of \cite[Theorem 4.4]{bcyy}, we easily find that
$H^{p,q,s}_{W,\rm{fin}}\cap\mathcal C$ is dense in $H^p_W$.
By the definitions of $H^{p,\infty,s}_{W,\rm{fin}}$
and $(p,\infty,s)_W$-atoms,
we conclude that every $\vec f\in H^{p,\infty,s}_{W,\rm{fin}}$ is
a $(p,\infty,s)_W$-atom and $\vec f\in(L^{q'}_s)^m$,
which, together with Theorem \ref{dual}, the aforementioned density, Lemma \ref{fgduiou},
the assumption that $T$ is bounded from $H^p_W$
to $H^p_W$ on all the continuous $(p,\infty,s)_W$-atoms,
and Lemma \ref{finatom}, further implies that,
for any $\vec g\in\mathcal L_{p,q,s,W}$,
\begin{align*}
\left\|\widetilde{T^*}\left(\vec g\right)\right\|_{\mathcal L_{p,q,s,W}}
&=\sup_{\genfrac{}{}{0pt}{}{\vec f\in H^{p,\infty,s}_{W,\rm{fin}}
\cap\mathcal C}
{\|\vec f\|_{H^{p,\infty,s}_{W,\rm{fin}}=1}}}
\left|\int_{\mathbb{R}^n}\vec f(x)\widetilde{T^*}(\vec g)(x)\,dx\right|\\
&=\sup_{\genfrac{}{}{0pt}{}{\vec f\in H^{p,\infty,s}_{W,\rm{fin}}
\cap\mathcal C}
{\|\vec f\|_{H^{p,\infty,s}_{W,\rm{fin}}=1}}}
\left|\int_{\mathbb{R}^n}T\left(\vec f\right)(x)\vec g(x)\,dx\right|\\
&\le\sup_{\genfrac{}{}{0pt}{}{\vec f\in H^{p,\infty,s}_{W,\rm{fin}}
\cap\mathcal C}
{\|\vec f\|_{H^{p,\infty,s}_{W,\rm{fin}}=1}}}
\left\|T\left(\vec f\right)\right\|_{H^p_W}
\left\|\vec g\right\|_{\mathcal L_{p,q,s,W}}\\
&\lesssim\sup_{\genfrac{}{}{0pt}{}{\vec f\in H^{p,\infty,s}_{W,\rm{fin}}
\cap\mathcal C}
{\|\vec f\|_{H^{p,\infty,s}_{W,\rm{fin}}=1}}}
\left\|\vec f\right\|_{H^p_W}
\left\|\vec g\right\|_{\mathcal L_{p,q,s,W}}
=\left\|\vec g\right\|_{\mathcal L_{p,q,s,W}}.
\end{align*}
This shows that $\widetilde{T^*}$ is bounded on
the $\mathcal L_{p,q,s,W}$.
Therefore, applying this and Theorem \ref{btbbmc},
we find that, for any $\gamma\in\mathbb Z_+^n$ with $|\gamma|\leq s$,
$T^*(x^\gamma)=0$.
This finishes the proof of the sufficiency
and hence Theorem \ref{CZ}.
\end{proof}

\textbf{Conflict of Interest}\quad The authors declare no conflict of interest.

\bigskip

\noindent Yiqun Chen, Dachun Yang (Corresponding author) and Wen Yuan

\medskip

\noindent Laboratory of Mathematics and Complex Systems (Ministry of Education of China),
School of Mathematical Sciences, Beijing Normal University, Beijing 100875, The
People's Republic of China

\smallskip

\noindent{\it E-mails:} \texttt{yiqunchen@mail.bnu.edu.cn} (Y. Chen)

\noindent\phantom{{\it E-mails:} }\texttt{dcyang@bnu.edu.cn} (D. Yang)

\noindent\phantom{{\it E-mails:} }\texttt{wenyuan@bnu.edu.cn} (W. Yuan)

\end{document}